\newtheorem{definition}{Definition}
\newtheorem{lemma}{Lemma}
\newtheorem{theorem}{Theorem}
\newtheorem{remark}{Remark}
\definecolor{darkgreen}{rgb}{0,0.6,0}
\definecolor{darkblue}{rgb}{0,0,0.6}
\definecolor{darkred}{rgb}{0.8,0.0,0.0}
\tikzset{fs/.style={#1,solid,mark size=.9pt,thick,->},
	fs/.default={black},%
	cgZeroStab/.style={blue, solid, mark=+, mark options={solid, blue}},
	cgOneStab/.style={cyan, solid, mark=*, mark options={solid, cyan}},
	cgTwoStab/.style={teal, solid, mark=diamond*, mark options={solid, teal}},
	dgZeroStab/.style={orange, solid, mark=pentagon*, mark options={solid, orange}},
	dgOneStab/.style={red, solid, mark=triangle*, mark options={solid, red}},
	Neilan/.style={magenta, solid, mark=square*, mark options={solid, magenta}},
	NeilanSalgadoZhang/.style={darkgray, solid, mark=star, mark options={solid, darkgray}},
	uniformL2/.style={blue, dashed, mark=diamond*, mark options={solid, blue}},
	adaptiveL2/.style={blue, solid, mark=diamond*, mark options={solid, blue}},
	uniformH1/.style={cyan, dashed, mark=square, mark options={solid, cyan}},
	adaptiveH1/.style={cyan, solid, mark=square, mark options={solid, cyan}},
	uniformH2/.style={teal, dashed, mark=triangle, mark options={solid, teal}},
	adaptiveH2/.style={teal, solid, mark=triangle, mark options={solid, teal}}  
}
\tikzset{myStyle/.style={#1,thick,->},
	myStyle/.default={red},%
	myS1/.style={myStyle},
	myS2/.style={myStyle=blue},
	L2uniform/.style={red, solid, mark=+,mark size=.9pt},
	L2adaptive/.style={red, dashed, mark=+,mark size=.9pt},
	H1uniform/.style={blue, solid, mark=+,mark size=.9pt},
	H1adaptive/.style={blue, dashed, mark=+,mark size=.9pt},
	H2uniform/.style={darkgreen, solid, mark=+,mark size=.9pt},
	H2adaptive/.style={darkgreen, dashed, mark=+,mark size=.9pt},
	H2huniform/.style={orange, solid, mark=+,mark size=.9pt},
	H2hadaptive/.style={orange, dashed, mark=+,mark size=.9pt},
	Etauniform/.style={magenta, solid, mark=o,mark size=.9pt},
	Etaadaptive/.style={magenta, dashed, mark=o,mark size=.9pt},
	pw/.style={red, solid, mark=*, mark size=1.3pt},
	fe0/.style={blue, solid, mark=square*, mark size=1.3pt},
	fe1/.style={darkgreen, solid, mark=triangle*, mark size=1.3pt},
	fe2/.style={magenta, solid, mark=diamond*, mark size=1.3pt},
}
\newcommand{\eg}{e.g.}
\newcommand{\ie}{i.e.}
\newcommand{\Cordes}{Cordes}
\newcommand{\Ctr}{C_{\textup{tr}}}
\newcommand{\Cv}{C_\textup{v}}
\newcommand{\CE}{C_{\mathbb E}}
\newcommand{\CH}{C_{\FEHessian{}}}
\newcommand{\uconf}{\widetilde{u_h}}
\newcommand{\uSpace}{\mathcal V_{h}}
\newcommand{\uSpaceZero}{\mathcal V_{h,0}}
\newcommand{\uSpaceConf}{\mathcal V_{h,\text{conf}}}
\newcommand{\HSpace}{\mathcal W_{h}}
\newcommand{\HSpaceCG}{\mathcal W_h^{\textup{CG}}}
\newcommand{\HSpaceDG}{\mathcal W_h^{\textup{DG}}}
\newcommand{\MyDGSpace}{\HSpaceDG{^{(p-2)}}}
\newcommand{\Hnorm}[1]{\norm{#1}_{H^2_h(\Omega)}}
\renewcommand\restr[2]{{
	\left.\kern-\nulldelimiterspace 
		#1 
		\vphantom{|} 
		\right|_{#2} 
}}
\newcommand{\Hessian}{\nabla^2}
\newcommand{\FELaplace}[1]{\ifthenelse{\equal{#1}{}}{\mathbb H_\Delta}{\mathbb H_{\Delta}(#1)}}
\newcommand{\FELaplaceCG}[1]{\ifthenelse{\equal{#1}{}}{\mathbb H^{\textup{CG}}_\Delta}{\mathbb H^{\textup{CG}}_{\Delta}(#1)}}
\newcommand{\FELaplaceDG}[1]{\ifthenelse{\equal{#1}{}}{\mathbb H^{\textup{DG}}_\Delta}{\mathbb H^{\textup{DG}}_{\Delta}(#1)}}
\newcommand{\FEHessian}[1]{\ifthenelse{\equal{#1}{}}{\mathbb H}{\mathbb H(#1)}}
\newcommand{\FEHessianCG}[1]{\ifthenelse{\equal{#1}{}}{\mathbb H^{\textup{CG}}}{\mathbb H^{\textup{CG}}(#1)}}
\newcommand{\FEHessianDG}[1]{\ifthenelse{\equal{#1}{}}{\mathbb H^{\textup{DG}}}{\mathbb H^{\textup{DG}}(#1)}}
\newcommand{\hij}[1]{\ifthenelse{\equal{#1}{}}{\mathbb H_{ij}}{\mathbb H_{ij}(#1)}}
\newcommand{\hii}[1]{\FEHessian{}_{ii}(#1)}
\newcommand{\LiftPP}{\mathbb E_{\HSpaceCG}} 
\newcommand{\LiftHCT}{\mathbb E_{h}}
\newcommand{\I}{\mathbb{I}}
\newcommand{\N}{\mathbb{N}}
\renewcommand{\P}{\mathbb{P}}
\newcommand{\R}{\mathbb{R}}
\newcommand{\W}{\mathbb{W}}
\newcommand{\FF}{\mathcal{F}}
\newcommand{\LL}{\mathcal{L}}
\newcommand{\PP}{\mathcal{P}}
\newcommand{\TT}{\mathcal{T}}
\newcommand{\Edges}{\FF_h}
\newcommand{\InnerEdges}{\FF_h^{\mathcal{I}}}
\newcommand{\constCordes}{\varepsilon}
\newcommand{\constEllipticity}{\lambda_\text{E}}
\newcommand{\email}[1]{\href{mailto:#1}{#1}}
\title{Error estimation for second-order PDEs in non-variational form}
\author{
		Jan Blechschmidt%
		\thanks{Technische Universität Chemnitz, Faculty of Mathematics, Professorship Numerical Mathematics (Partial Differential Equations), 09107 Chemnitz, Germany, \email{jan.blechschmidt@mathematik.tu-chemnitz.de}, \url{https://www.tu-chemnitz.de/mathematik/part_dgl/people/blechschmidt}}
		\and
		Roland Herzog%
		\thanks{Technische Universität Chemnitz, Faculty of Mathematics, Professorship Numerical Mathematics (Partial Differential Equations), 09107 Chemnitz, Germany, \email{roland.herzog@mathematik.tu-chemnitz.de}, \url{https://www.tu-chemnitz.de/mathematik/part_dgl/people/herzog}}
		\and
		Max Winkler%
		\thanks{Technische Universität Chemnitz, Faculty of Mathematics, Professorship Numerical Mathematics (Partial Differential Equations), 09107 Chemnitz, Germany, \email{max.winkler@mathematik.tu-chemnitz.de}, \url{https://www.tu-chemnitz.de/mathematik/part_dgl/people/winkler}}%
}
\begin{document}
\pgfplotstableset{
	multicolumn names, 
	col sep=comma, 
	sci zerofill,
	columns/Ndofs/.style={
		column name=$\dim(V_h)$, 
	int detect},  
	columns/NdofsMixed/.style={
		column name=$\dim(V_h + W_h^{2 \times 2})$,
	int detect},
	columns/L2mixed/.style={string type,
		column type=l,
	column name=$\norm{u-u_h}_{L^2(\Omega)}$},
	create on use/L2mixed/.style={
		create col/assign/.code={%
			\edef\entry{
				\noexpand\pgfmathprintnumber[sci e, fixed zerofill, precision=2]{
					\thisrow{L2_error}
				}
				\ifnum\pgfplotstablerow=0
				\else
					(\noexpand\pgfmathprintnumber[fixed zerofill, precision=2]{\thisrow{L2_eoc}})
				\fi
			}%
			\pgfkeyslet{/pgfplots/table/create col/next content}\entry
		}
	},
	columns/H1mixed/.style={string type,
		column type=l,
	column name=$\norm{u-u_h}_{H^1_0(\Omega)}$},
	create on use/H1mixed/.style={
		create col/assign/.code={%
			\edef\entry{
				\noexpand\pgfmathprintnumber[sci, sci e, fixed zerofill, precision=2]{
					\thisrow{H1_error}
				}
				\ifnum\pgfplotstablerow=0
				\else
					(\noexpand\pgfmathprintnumber[fixed zerofill, precision=2]{\thisrow{H1_eoc}})
				\fi
			}%
			\pgfkeyslet{/pgfplots/table/create col/next content}\entry
		}
	},
	columns/H2mixed/.style={string type,
		column type=l,
	column name=$\norm{u-u_h}_{H^2_0(\Omega)}$},
	create on use/H2mixed/.style={
		create col/assign/.code={%
			\edef\entry{
				\noexpand\pgfmathprintnumber[sci, sci e, fixed zerofill, precision=2]{
					\thisrow{H2_error}
				}
				\ifnum\pgfplotstablerow=0
				\else
					(\noexpand\pgfmathprintnumber[fixed zerofill, precision=2]{\thisrow{H2_eoc}})
				\fi
			}%
			\pgfkeyslet{/pgfplots/table/create col/next content}\entry
		}
	},
	columns/H2hmixed/.style={string type,
		column type=l,
	column name=$\norm{u-u_h}_{H^2_h(\Omega)}$},
	create on use/H2hmixed/.style={
		create col/assign/.code={%
			\edef\entry{
				\noexpand\pgfmathprintnumber[sci, sci e, sci zerofill, precision=2]{
					\thisrow{H2h_error}
				}
				\ifnum\pgfplotstablerow=0
				\else
					(\noexpand\pgfmathprintnumber[fixed, fixed zerofill, precision=2]{\thisrow{H2h_eoc}})
				\fi
			}%
			\pgfkeyslet{/pgfplots/table/create col/next content}\entry
		}
	},
	columns/L2_error/.style={
		column name=$\norm{u-u_h}_{L^2(\Omega)}$,
		sci,
		sci e,
	sci zerofill},
	columns/L2_eoc/.style={
		string replace={0}{},
		column name=$\text{EOC}_{L^2}$,
		precision=2,
		fixed,
		fixed zerofill,
		postproc cell content/.append style={/pgfplots/table/@cell content/.add={(}{)},},
	},
	columns/H1_error/.style={
		column name=$\norm{u-u_h}_{H^1_0(\Omega)}$,
		sci,
		sci e,
	sci zerofill},
	columns/H1_eoc/.style={
		string replace={0}{},
		column name=$\text{EOC}_{H^1_0}$,
		sci,
		sci zerofill, 
	precision=2, fixed},
	columns/H2_error/.style={
		column name=$\norm{u-u_h}_{H^2_0(\Omega)}$,
		sci,
		sci e,
	sci zerofill},
	columns/H2_eoc/.style={
		string replace={0}{},
		column name=$\text{EOC}_{H^2_0}$,
		precision=2,
		fixed,
	fixed zerofill},
	columns/H2h_error/.style={
		column name=$\norm{u-u_h}_{H^2_h(\Omega)}$,
		sci,
		sci e,
	sci zerofill},
	columns/H2h_eoc/.style={
		string replace={0}{},
		column name=$\text{EOC}_{H^2_h}$,
		sci,
		sci zerofill, 
	precision=2, fixed},
	columns/gmres_iter/.style={
		column name=$N_\text{gmres}$,
	column type={S},string type},
	every head row/.style={
		before row={\toprule}, 
		after row={\midrule} 
	},
	every last row/.style={after row=\bottomrule}, 
}

\maketitle

\begin{abstract}
	Second-order partial differential equations in non-divergence form are considered.
	Equations of this kind typically arise as subproblems for the solution of Hamilton-Jacobi-Bellman equations in the context of stochastic optimal control, or as the linearization of fully nonlinear second-order PDEs. 
	The non-divergence form in these problems is natural.
	If the coefficients of the diffusion matrix are not differentiable, the problem can not be transformed into the more convenient variational form.

	We investigate tailored non-conforming finite element approximations of second-order PDEs in non-divergence form, utilizing finite-element Hessian recovery strategies to approximate second derivatives in the equation. 
	We study both approximations with continuous and discontinuous trial functions.
	Of particular interest are a~priori and a~posteriori error estimates as well as adaptive finite element methods.
	In numerical experiments our method is compared with other approaches known from the literature.
\end{abstract}

\section{Introduction}
\label{sec:Introduction}

Many boundary value problems feature linear, second-order partial differential equations in divergence form. That is, the differential operator may be written as
\begin{equation}\label{eq:operator_in_divergence_form}
	\widetilde \LL u \coloneqq \div(\widetilde A\,\nabla u) + \widetilde b^{\,\top}\,\nabla u + \widetilde c\,u
\end{equation}
with coefficients $\widetilde A\colon\Omega\to\R^{d\times d}$,
$\widetilde b\colon\Omega\to\R^d$, $\widetilde c\colon \Omega\to\R$. Here and in the following
$\Omega\subset\R^d$, $d\in\N$, is a bounded domain.
Although formulation \eqref{eq:operator_in_divergence_form} covers a wide range of applications, there are some linear problems involving operators in non-divergence form
\begin{equation}
	\label{eq:general_operator}
	\LL u \coloneqq A\dprod\nabla^2u + b^\top \,\nabla u + c\,u.
\end{equation}
Here, $A \dprod B$ denotes the Frobenius inner product
$\sum_{i,j=1}^d a_{ij}\,b_{ij}$ of two matrices $A, B \in \R^{d\times d}$,
and $A\colon \Omega\to\R^{d\times d}$, $b\colon \Omega\to\R^d$ and $c\colon \Omega\to \R$ are given coefficients.
The matrix~$A$ is assumed to be almost everywhere positive definite and symmetric.

Classical and strong solutions of problems in non-divergence form with Hölder-regular or continuous coefficients, respectively, have been analyzed in \cite[Ch.~6, 9]{GilbargTrudinger2001}.
In the applications of interest here, however, coefficients are only bounded and measurable.
Under even higher smoothness assumptions on the coefficient~$A$,
a non-divergence form operator \eqref{eq:general_operator} can be transformed into an operator in divergence form \eqref{eq:operator_in_divergence_form}
with $\widetilde A = A$ and $\widetilde b=b-\Div A$, where $\Div A$ denotes the row-wise
divergence of the matrix~$A$.
Even if $A$ is smooth, however, this transformation may lead to convection dominated problems which
induce further challenges.

Our aim in this paper is to investigate the boundary value problem
\begin{equation}
	\label{eq:bvp}
	\begin{aligned}
		\LL u & = f & &\text{in } \Omega,
		\\
		u & = 0 & & \text{on } \Gamma \coloneqq \partial\Omega,
	\end{aligned}
\end{equation}
for some source term $f\in L^2(\Omega)$.
Let us briefly mention some applications where problems of this kind are of interest.
Naturally, linear problems with operators in non-divergence form arise in the context of stochastic differential equations due to the It\^o formula, see
\cite{BarlesJakobsen2002,BlechschmidtHerzog2018,FlemingRishel1975,Pham2009}.
Such problems play a central role in financial mathematics, \eg, the valuation of financial products.
A closely connected area is the numerical solution of second-order Hamilton-Jacobi-Bellman (HJB) equations \cite{SmearsSueli2014,BrennerKawecki2019_preprint}, where the existence of an operator in non-divergence form also follows due to the stochastic influence.
In addition to the non-variational nature of the linear operator, these problems possess further numerical challenges due to nonlinearities introduced by a pointwise minimization.
A further application is the solution of highly nonlinear second order
partial differential equations. A linearization used, \eg, in a Newton method,
leads to a problem of the form \eqref{eq:bvp} in the general case.
Typical examples include the Monge-Amp\`ere equation \cite{BenamouFroeseOberman2010,BrennerNeilan2012,DeanGlowinski2006,FengNeilan2009,Gutierrez2001,Neilan2014,TrudingerWang2008} which reads
$\det(\nabla^2 u) = (\partial_{xx} u) \, (\partial_{yy} u) - (\partial_{xy} u)^2= f$ in case of $d=2$.
The linearization at a function $u_0$ leads to a differential operator of the form
\eqref{eq:general_operator} with
\begin{equation*}
	A = \begin{pmatrix}
		\phantom{-}\partial_{yy} u_0 & -\partial_{xy} u_0 \\
		-\partial_{xy} u_0 & \phantom{-}\partial_{xx} u_0
	\end{pmatrix}.
\end{equation*}

For the solution of problems in the form \eqref{eq:bvp} several different approaches avoiding
the transformation into a divergence form PDE have recently been studied.
Many approaches aim at approximating strong solutions, \ie,
\begin{equation}
	\label{eq:strong_form}
	\text{Find }\ u\in H^2(\Omega)\cap H_0^1(\Omega):\quad \int_\Omega \LL u\,v\,\dx = \int_\Omega f\,v\,\dx \quad \forall v \in L^2(\Omega).
\end{equation}
A discrete approximation of the solution $u\in H^2(\Omega)\cap H_0^1(\Omega)$ of \eqref{eq:strong_form}
in the case $\LL=A\dprod\nabla^2$ is usually obtained by solving a problem of the form
\begin{equation}
	\label{eq:mixed_form_general}
	\text{Find }\ u_h\in \uSpaceZero:\quad \int_\Omega A\dprod\FEHessian{u_h}\,\tau_h(v_h)\,\dx = \int_\Omega f\,\tau_h(v_h)\,\dx \quad \forall v_h\in \uSpaceZero,
\end{equation}
where $\uSpaceZero \coloneqq \uSpace\cap H_0^1(\Omega)$ is a finite-dimensional trial and test space
and $\FEHessian{u_h}$ is an approximation of the
Hessian $\nabla^2 u$, also sought in a finite-dimensional space $\HSpace(\R^{d\times d})$ with discretization parameter $h>0$.
Several approaches have been studied in the literature and most discretization strategies differ in the choice of the discrete spaces $\uSpace$, $\HSpace$, the approximation $\FEHessian{}$ of the Hessian and the realization of the test function $\tau_h\colon \uSpace\to L^2(\Omega)$. 

Let us briefly summarize the most prominent approaches.
The first article discussing a direct treatment of a non-variational problem,
to the best of the authors' knowledge, is \citet{LakkisPryer2011}. Therein,
$\uSpace$ and $\HSpace$ consist of continuous Lagrange finite element functions of order $p\ge 1$,
the choice $\tau_h = \text{id}$ is used 
and the finite-element Hessian $\FEHessian{u_h}\in \HSpace(\R^{d\times d})$ is obtained by a discrete version of the integration-by-parts formula, \ie,
\begin{equation}
	\label{eq:def_discrete_Hessian}
	\int_\Omega \FEHessian{u_h}\dprod v_h\,\dx = -\int_\Omega \nabla u_h \cdot \Div v_h\,\dx
	+ \int_\Gamma \nabla u_h \cdot \, (v_h\,n_\Gamma)\,\ds \quad \forall v_h\in \HSpace(\R^{d\times d})
	.
\end{equation}
Here, $n_\Gamma\colon \Gamma\to \R^d$ denotes the outer normal vector on
$\Gamma\coloneqq\partial\Omega$.
A closely related approach using 
a discontinuous Galerkin approximation for the finite-element Hessian $\FEHessian{u_h}$ is studied
by~\citet{Neilan2017}.

There are other approaches that avoid the coupling with an additional variational formulation
used for the computation of a Hessian approximation. This
is possible when using the cell-wise exact Hessian $\FEHessian{}\coloneqq\nabla_h^2$
but additional jump penalty terms over the interior cell edges/faces have to be added to the
bilinear form.
This idea is first studied by \citet{SmearsSueli2013}, under the weak assumption that $A$ belongs to $L^\infty(\Omega;\R^{d\times d})$ and fulfills a so-called \Cordes\ condition.
In that work, discontinuous Galerkin approximations and the choice $\tau_h(v_h) = \Delta_h v_h$ are used and appropriate jump penalty terms are added to the bilinear form so that discrete coercivity is guaranteed.
Quite similar is the approach of \citet{NeilanSalgadoZhang2017} who use continuous Lagrange elements. 
In both approaches the coercivity is shown via a discrete Miranda-Talenti estimate.
In a related line of research, \citet{FengHenningsNeilan2017} employ continuous Lagrange finite elements using the choice $\tau_h=\id$.
They show well-posedness of the discrete scheme ensuing via a discrete inf-sup condition.
For this approach, at least continuity of the coefficients of $A$ has to be assumed as a localization argument by freezing the coefficients of $A$ is applied in the proofs.
Analogous results are presented in \citet{FengNeilanSchnake2018} for a discontinuous Galerkin approximation.
Finally, an extension of the technique of \citet{SmearsSueli2013} to curved domains can be found in \cite{Kawecki2019}.

Before continuing, it is worth pointing out that the respective discrete linear systems and the techniques employed to prove their well-posedness differ in the references above and have far-reaching implications for computational practice.
	In particular, \cite{FengHenningsNeilan2017,FengNeilanSchnake2018} rely on a discrete Calderon-Zygmund estimate and therefore on a continuity assumption for the leading coefficients of the differential operator, as well as sufficiently fine meshes.
	Unfortunately, the former is typically not satisfied for HJB equations, which we have in mind as future applications.
	Moreover, the requirement of sufficiently fine initial meshes obstructs the utility of an adaptive mesh refinement strategy, which we develop here.
	Such limitations are not present in discretization approaches relying on the Cordes condition, including \cite{SmearsSueli2013,NeilanSalgadoZhang2017,Kawecki2019} and the present work.

A further method, which is proposed by \citet{Gallistl2017}, is based on a stabilized mixed finite element discretization involving an approximation of the gradient $w_h\in \HSpace(\R^d)$ by $\int_\Omega(\nabla u_h - w_h)\cdot\nabla v_h \, \dx = 0$ for all $v_h\in \uSpace$.  
This is, to the best of our knowledge, the first contribution proving also a~posteriori error estimates and the convergence of an adaptive finite element
method for the solution of non-divergence form PDEs.

In order to complete our survey, we want to mention that there are many further approaches
that do not directly fit into the framework \eqref{eq:mixed_form_general}.
This includes for instance regularization approaches like the \emph{vanishing moment method} 
studied in \cite{FengNeilan2008} and the references therein,
the primal-dual weak Galerkin method \cite{WangWang2017},
the Alexandroff-Bakelman-Pucci (ABP) method \cite{NochettoZhang2018},
or certain finite element schemes based on a very weak formulation of the model problem \cite{Fuehrer2019_preprint}.

In the present paper we discuss a new method combining multiple ideas of the
previously outlined approaches. To be more precise, we consider the discrete
formulation \eqref{eq:mixed_form_general} with a finite-element Hessian obtained either by
continuous finite elements as in \eqref{eq:def_discrete_Hessian}
or by a discontinuous ansatz that we specify later.
For the test functions we use $\tau_h(u_h)= \FELaplace{u_h} \coloneqq \trace \FEHessian{u_h}$.

The main results of this article include a rigorous proof of the well-posedness of the discrete scheme,
which is also based on a discrete Miranda-Talenti estimate following from a \Cordes\ condition.
Moreover, preconditioning strategies for the resulting system of linear equations 
are studied and we observe in experiments that the preconditioner is robust with respect to the mesh parameter.
Furthermore, we study a~priori and reliable a~posteriori error estimates in the energy norm.
Based on the a~posteriori error estimates we implement an adaptive finite element method
and confirm by experiments that the convergence rate is optimal in all test
cases, even for less smooth solutions.

Our method combines several advantages of the previously mentioned approaches. First, it is applicable to problems with discontinuous coefficients $A$ and hence allows an extension to HJB equations.
Among the approaches presented in our survey, only \cite{SmearsSueli2013} and \cite{NeilanSalgadoZhang2017} possess this property as well.
Second, under additional assumptions, our discretization can be realized without the addition of stabilization terms, which would involve jump penalties at the cell interfaces.
In numerical experiments we observed that all approaches which do use stabilization terms do not converge with an optimal rate in the $L^2(\Omega)$-norm. This surprising observation deserves further investigation. In addition to our approach, only the methods from \cite{Neilan2017} and \cite{LakkisPryer2013} likewise exhibit optimal $L^2(\Omega)$ rates. It should be noted that the computational cost for the approaches using a Hessian recovery strategy, including the proposed scheme, is naturally higher than the cost for schemes relying on the broken exact Hessian. However, the advantages mentioned above may justify this additional effort.

\section{The continuous problem}
\label{sec:continuous_problem}

Throughout this article $\Omega \subset \R^d$, $d\in\N$, is a bounded and convex domain.
We consider the boundary value problem with a second-order differential operator in non-divergence form
\begin{equation}
	\label{eq:MainProblem}
	\begin{aligned}
		A \dprod \nabla^2 u &= f & & \text{in } \Omega,\\
		u &= 0 & & \text{on } \Gamma
	\end{aligned}
\end{equation}
with $f \in L^2(\Omega)$.
The coefficient matrix~$A$ is assumed to belong to 
$L^{\infty}(\Omega; \R^{d\times d})$, to be symmetric and 
uniformly positive definite, \ie, there exists a constant $\constEllipticity > 0$ such that
\begin{equation}\label{eq:ellipticity}
	\xi^T \, A\, \xi \ge \constEllipticity \, \abs{\xi}^2 \quad \text{for all } \xi \in \R^d,
\end{equation}
almost everywhere in $\Omega$.

As the coefficient matrix~$A$ is not necessarily differentiable, one can at most ask for 
\emph{strong solutions} of \eqref{eq:MainProblem}, \ie, functions 
$u\in X\coloneqq H^2(\Omega) \cap H_0^1(\Omega)$ solving
\begin{equation}\label{eq:strong_form_L}
	\int_\Omega A\dprod\nabla^2 u\,v\,\dx = \int_\Omega f\,v\,\dx \quad\forall v\in L^2(\Omega)
	.
\end{equation}
Since the Laplacian $\Delta\colon X \to L^2(\Omega)$
is bijective due to the convexity of $\Omega\subset \R^d$,
the latter  equation is equivalent to
\begin{equation}\label{eq:strong_form_laplace}
	\int_\Omega A\dprod\nabla^2 u\,\Delta v\,\dx = \int_\Omega f\,\Delta v\,\dx \quad\forall v\in X.
\end{equation}

Existence of strong solutions follow for instance under the slightly stronger assumption $A\in C(\overline\Omega; \R^{d\times d})$ and when $\Gamma\coloneqq \partial\Omega$ is of class $C^{1,1}$, even for non-convex domains, see \cite[Theorem~9.15]{GilbargTrudinger2001}.

Another idea, which implies well-posedness even for general convex domains 
and which allows for discontinuous coefficients, is to impose a \Cordes\ condition,
\ie, the existence of a constant $\constCordes \in (0,1]$ such that
\begin{equation}
	\label{eq:CordesCondition}
	\frac{
		\norm{A}_F^2
	}{
		\bigh(){\trace A}^2
	}
	= 
	\frac{
		\sum_{i,j = 1}^d \normalh(){A_{ij}}^2
	}{
		\bigh(){\sum_{i=1}^d A_{ii}}^2
	}
	\le
	\frac{1}{d-1 + \constCordes}
	\quad \text{a.e.\ in } \Omega.
\end{equation}
In the two-dimensional case, this assumption follows from \eqref{eq:ellipticity}. 
As has been discussed in the recent literature, \eg\ \cite{SmearsSueli2013},
a rescaling of the equation \eqref{eq:strong_form_laplace} with the normalization coefficient
\begin{equation*}
	\gamma \coloneqq 
	\frac{
		\sum_{i=1}^d A_{ii}
	}{
		\sum_{i,j = 1}^d \bigh(){A_{ij}}^2
	} \in L^{\infty}(\Omega)
\end{equation*}
becomes advantageous in the analysis of the problem.
This can be explained with the following result, whose proof is stated in \cite[Lemma~1]{SmearsSueli2013}.
\begin{lemma}
	\label{lem:consequence_cordes}
	Assume that $A$ belongs to $L^\infty(\Omega;\R^{d\times d})$ and
	satisfies \eqref{eq:CordesCondition}.
	Then the inequality
	\begin{equation*}
		\norm{\gamma A - I}_F \le \sqrt{1-\varepsilon} \quad \text{a.e.\ in}\ \Omega
	\end{equation*}
	holds.
\end{lemma}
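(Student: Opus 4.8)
The plan is to prove the bound by a direct computation of $\norm{\gamma A - I}_F^2$, since the claim is essentially algebraic once the definition of the normalization coefficient $\gamma$ is inserted. Writing $T \coloneqq \trace A = \sum_{i=1}^d A_{ii}$ and $S \coloneqq \norm{A}_F^2 = \sum_{i,j=1}^d A_{ij}^2$, I first note that $S > 0$ almost everywhere, because $A$ is uniformly positive definite by \eqref{eq:ellipticity}; hence $\gamma = T/S$ is well defined a.e.\ in $\Omega$.

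Next I would expand the Frobenius norm using bilinearity of the inner product $A \dprod B$:
\[
	\norm{\gamma A - I}_F^2 = \gamma^2 \, (A \dprod A) - 2\gamma\,(A\dprod I) + (I \dprod I).
\]
The three contributions are evaluated via $A \dprod A = S$, the identity $A\dprod I = \sum_{i,j} A_{ij}\,\delta_{ij} = \sum_i A_{ii} = T$, and $I \dprod I = d$. Substituting $\gamma = T/S$ collapses the first two terms, since $\gamma^2 S - 2\gamma T = T^2/S - 2\,T^2/S = -T^2/S$, so that
\[
	\norm{\gamma A - I}_F^2 = d - \frac{T^2}{S} = d - \frac{(\trace A)^2}{\norm{A}_F^2}.
\]

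The final step is to invoke the \Cordes\ condition \eqref{eq:CordesCondition}, which reads $S/T^2 \le 1/(d-1+\varepsilon)$, equivalently $T^2/S \ge d - 1 + \varepsilon$. Inserting this lower bound yields
\[
	\norm{\gamma A - I}_F^2 \le d - (d-1+\varepsilon) = 1 - \varepsilon,
\]
and taking square roots gives the assertion a.e.\ in $\Omega$.

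I do not anticipate a genuine obstacle here: the argument is a one-line expansion followed by a substitution. The only points requiring care are the correct evaluation of the cross term $A \dprod I = \trace A$ and the justification that the division by $\norm{A}_F^2$ is admissible, which follows from \eqref{eq:ellipticity}. As a sanity check, the computation also clarifies the role of $\gamma$: differentiating $t \mapsto \norm{t\,A - I}_F^2$ shows that $\gamma = (A\dprod I)/\norm{A}_F^2 = T/S$ is exactly the minimizer, so this choice produces the sharpest possible bound of this form.
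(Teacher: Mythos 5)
Your proof is correct and complete: the expansion $\norm{\gamma A - I}_F^2 = \gamma^2\,\norm{A}_F^2 - 2\,\gamma\,\trace A + d = d - (\trace A)^2/\norm{A}_F^2$ followed by the \Cordes\ bound \eqref{eq:CordesCondition} immediately yields $1-\varepsilon$, and the positivity of $\norm{A}_F^2$ needed to define $\gamma$ is indeed guaranteed by \eqref{eq:ellipticity} (or already implicitly by the \Cordes\ condition itself). The paper does not prove \cref{lem:consequence_cordes} but defers to \cite[Lemma~1]{SmearsSueli2013}, whose argument is essentially this same direct computation, so your proposal matches the intended proof; the closing remark that $\gamma$ is the minimizer of $t\mapsto\norm{t\,A-I}_F^2$ is a nice but inessential addition.
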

Obviously, \eqref{eq:CordesCondition} guarantees that the rescaled matrix $\gamma A$ is close to
the identity matrix, and consequently, the differential operator $\gamma A\dprod\nabla^2$ is
close to the elliptic Laplace operator.
Thus, if the \Cordes\ condition is fulfilled one can consider instead of
\eqref{eq:strong_form_laplace} a variational problem with the bilinear form
$a\colon X\times X\to\R$ defined by 
\begin{equation*}
	a(u,v) \coloneqq \int_\Omega \gamma A\dprod \nabla^2 u\, \Delta v\,\dx,
\end{equation*}
and the linear form $F\in X'$ defined by
\begin{equation*}
	F(v) \coloneqq \int_\Omega \gamma f\,\Delta v\,\dx.
\end{equation*}
The variational problem we are going to study in this article is defined by
\begin{equation}
	\label{eq:weak_form}
	\text{Find}\ u\in X \text{ such that }
	a(u,v) = F(v)\quad \forall v\in X.
\end{equation}
Under the assumption \eqref{eq:CordesCondition} the bilinear form~$a$ is
elliptic in $X=H^2(\Omega)\cap H_0^1(\Omega)$ and with the Lax-Milgram Lemma one can immediately prove the following result.
\begin{lemma}\label{lem:existence}
	Assume that the coefficient matrix $A$ belongs to $L^\infty(\Omega;\R^{d\times d})$ and fulfills the \Cordes\ condition \eqref{eq:CordesCondition} with $\constCordes\in (0,1]$. Then, the problem
	\eqref{eq:weak_form} possesses a unique solution $u\in H^2(\Omega)\cap H_0^1(\Omega)$.
	Moreover, the \emph{a~priori} estimate
	\begin{equation*}
		\norm{u}_{H^2(\Omega)} \le C_a \,\norm{f}_{L^2(\Omega)}
	\end{equation*}
	is fulfilled with some constant $C_a=C_a(d,\diam(\Omega),\lambda_E,\norm{A}_{L^\infty(\Omega)},\varepsilon)$.
\end{lemma}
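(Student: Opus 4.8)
The plan is to apply the Lax--Milgram lemma on the Hilbert space $X = H^2(\Omega)\cap H_0^1(\Omega)$, but equipped with the norm $v\mapsto\norm{\Delta v}_{L^2(\Omega)}$ rather than the full $H^2$-norm. The justification for this choice is the Miranda--Talenti estimate, valid precisely because $\Omega$ is convex, which states that $\norm{\nabla^2 v}_{L^2(\Omega)}\le\norm{\Delta v}_{L^2(\Omega)}$ for every $v\in X$. Together with the pointwise Cauchy--Schwarz bound $\abs{\Delta v}\le\sqrt d\,\abs{\nabla^2 v}$ and a twofold application of the Poincaré inequality (permissible since $v\in H_0^1(\Omega)$, so that $\norm{\nabla v}_{L^2(\Omega)}^2 = -\int_\Omega v\,\Delta v\,\dx$ carries no boundary contribution), this shows that $\norm{\Delta\,\cdot\,}_{L^2(\Omega)}$ is equivalent to $\norm{\cdot}_{H^2(\Omega)}$ on $X$, with equivalence constants depending only on $d$ and $\diam(\Omega)$. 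I would carry out the remaining estimates in this equivalent norm.

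For the coercivity I would isolate the Laplacian by writing $\gamma A = I + (\gamma A - I)$ and using $I\dprod\nabla^2 v = \trace(\nabla^2 v) = \Delta v$, so that testing the bilinear form against $v$ itself gives
\begin{equation*}
	a(v,v) = \norm{\Delta v}_{L^2(\Omega)}^2 + \int_\Omega \big((\gamma A - I)\dprod\nabla^2 v\big)\,\Delta v\,\dx.
\end{equation*}
The second term is controlled by the Frobenius Cauchy--Schwarz inequality and \Cref{lem:consequence_cordes}, which yield the pointwise bound $\abs{(\gamma A - I)\dprod\nabla^2 v}\le\sqrt{1-\varepsilon}\,\abs{\nabla^2 v}$; an $L^2(\Omega)$ Cauchy--Schwarz step followed once more by the Miranda--Talenti estimate then bounds its modulus by $\sqrt{1-\varepsilon}\,\norm{\Delta v}_{L^2(\Omega)}^2$. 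Consequently
\begin{equation*}
	a(v,v) \ge \big(1-\sqrt{1-\varepsilon}\big)\,\norm{\Delta v}_{L^2(\Omega)}^2,
\end{equation*}
and the coercivity constant $1-\sqrt{1-\varepsilon}$ is strictly positive because $\varepsilon\in(0,1]$.

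It remains to verify boundedness. For the bilinear form, \Cref{lem:consequence_cordes} gives $\norm{\gamma A}_F\le\sqrt d+\sqrt{1-\varepsilon}$ almost everywhere, so $\abs{a(u,v)}\le(\sqrt d+\sqrt{1-\varepsilon})\,\norm{\nabla^2 u}_{L^2(\Omega)}\,\norm{\Delta v}_{L^2(\Omega)}$, which is bounded in the equivalent norm. For the linear form, the elementary inequality $(\trace A)^2\le d\,\norm{A}_F^2$ combined with the ellipticity consequence $\trace A\ge d\,\lambda_E$ shows $\gamma = \trace A/\norm{A}_F^2\le 1/\lambda_E$, whence $\gamma\in L^\infty(\Omega)$ and $\abs{F(v)}\le\lambda_E^{-1}\,\norm{f}_{L^2(\Omega)}\,\norm{\Delta v}_{L^2(\Omega)}$. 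The Lax--Milgram lemma then yields a unique $u\in X$ solving \eqref{eq:weak_form} together with the bound $\norm{\Delta u}_{L^2(\Omega)}\le(1-\sqrt{1-\varepsilon})^{-1}\lambda_E^{-1}\norm{f}_{L^2(\Omega)}$; translating back through the norm equivalence produces the asserted estimate $\norm{u}_{H^2(\Omega)}\le C_a\,\norm{f}_{L^2(\Omega)}$, with $C_a$ depending at most on $d$, $\diam(\Omega)$, $\lambda_E$, $\norm{A}_{L^\infty(\Omega)}$ and $\varepsilon$. The only genuinely nontrivial ingredient is the Miranda--Talenti estimate and the convexity hypothesis it requires; once that is in hand, everything else is the bookkeeping sketched above, and I expect the verification of the equivalent-norm constants to be the most delicate remaining point.
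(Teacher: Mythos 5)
Your proposal is correct and follows essentially the same route the paper intends: the paper's ``proof'' is simply a citation to \cite[Theorem~3]{SmearsSueli2013}, whose argument is precisely your combination of the Lax--Milgram lemma on $X$ equipped with the equivalent norm $\norm{\Delta\,\cdot\,}_{L^2(\Omega)}$, the Miranda--Talenti estimate for convex domains, and the bound $\norm{\gamma A - I}_F \le \sqrt{1-\varepsilon}$ from \cref{lem:consequence_cordes}. Your reconstruction, including the coercivity constant $1-\sqrt{1-\varepsilon}$ and the bound $\gamma \le 1/\constEllipticity$, is a faithful and correct filling-in of that cited proof.
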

\begin{proof}
	See \cite[Theorem~3]{SmearsSueli2013}.
\end{proof}

Notice that we restrict our discussion to problem \eqref{eq:MainProblem} mainly for notational simplicity.
For related investigations of equations involving also drift and potential terms, \ie,
the differential operator of the PDE is of the form \eqref{eq:general_operator} with $b\not\equiv 0$ and/or $c\not\equiv 0$, we refer the reader, \eg, to~\cite{SmearsSueli2014}.

\section{Discretization}
\label{sec:A-priori_Analysis}

We decompose our domain $\Omega$ into a family of feasible triangulations $\TT_h$ (triangular for $d=2$, tetrahedral for $d=3$) with discretization parameter $h=\max_{T\in\TT_h} h_T$, $h_T\coloneqq\diam(T)$. The diameter of the largest inscribed ball in a cell $T\in \TT_h$ is denoted by $\rho_T$. Throughout this article we assume that $\{\TT_h\}_{h>0}$ is shape-regular, \ie, there holds
\begin{equation*}
	\kappa_T\coloneqq\frac{h_T}{\rho_T} \le \kappa\quad \forall T\in \TT_h,
\end{equation*}
where the maximal aspect ratio~$\kappa$ is independent of $h$.
Moreover, meshes are considered which have a limited variation in the element size of neighboring elements, \ie, there is a constant $\Cv>0$ with $h_{T'} \le \Cv\,h_T$ for each $T,T'\in\TT_h$, $T\cap T'\ne\emptyset$.

By $\Edges$ we denote the set of facets of $\TT_h$ and by $n_F$ a unit normal
vector on $F\in\Edges$.
The normal vectors $n_F$ are chosen to point outwards if $F$ is a boundary facet and it has arbitrary but fixed orientation for interior facets. The diameter of a facet $F\in \Edges$ is denoted by $h_F$. Moreover, we denote the set of facets in the interior by $\InnerEdges$.
This includes all facets in the intersection of two elements in $\TT_h$.
Entities on either side of an interior facet are denoted by $\cdot^+$ and
$\cdot^-$, respectively, chosen in such a way that for $F=\partial T^+\cap
\partial T^-$, $n_F$ points towards $T^+$.

By $\PP_p(T)$, $T\in\TT_h$, we denote the set of polynomials on $T$ of degree not larger than $p\in \N$. Throughout this article generic constants are denoted by $c_{v_1,v_2,\ldots}$ where $v_1,v_2,\ldots$, are the quantities they depend on.

Furthermore, we introduce the following average and jump operators.
The average operators are defined by
\begin{equation*}
	\restr{\avg{u}}{F} \coloneqq \frac12(u^+ + u^-) \ \text{if}\ F\in \InnerEdges,\quad
	\restr{\avg{u}}{F} \coloneqq u\ \text{if}\ F\in \Edges\setminus\InnerEdges.
\end{equation*}
In a similar way, we define the jump operators
for matrix-valued functions $u\in H^1(\TT_h;\R^{d\times d})$ and for vector-valued functions $v\in H^1(\TT_h; \R^d)$ by
\begin{align*}
	\restr{\jump{u}}{F} &\coloneqq u^+\,n^+ + u^-\,n^-, &
	\restr{\jump{v}}{F} &\coloneqq v^+\otimes n^+ + v^-\otimes n^-,
	&&\text{if}\ F\in \InnerEdges,\\
	\restr{\jump{u}}{F} &\coloneqq u\,n, &
	\restr{\jump{v}}{F} &\coloneqq v\otimes n,
	&&\text{if}\ F\in \Edges\setminus\InnerEdges,
\end{align*}
with $n^+$ and $n^-$ the outward unit normal vectors on $\partial T^+$ and $\partial T^-$.
For scalar-valued functions we simply set $\restr{\jump{u}}{F} = u^+ - u^-$
for $F\in \InnerEdges$ and $\restr{\jump{u}}{F} = u$ for $F\in \Edges\setminus\InnerEdges$.

We will frequently use inverse inequalities and trace theorems in our analysis. These results are summarized in the following lemma.
\begin{lemma}\label{lem:invserse_trace}
	The following inequalities hold:
	\begin{enumerate}[label=\alph*)]
	\item For given $0\le \ell \le k \le 1$ and $s,t\in[1,\infty]$ there exists some $\Ctr>0$ depending on $k,\ell,s,t,p,d$ and $\kappa$ such that the inequality
	\begin{equation*}
		\abs{v_h}_{W^{k,s}(T)} \le \Ctr\, h_T^{k-\ell}\, \abs{T}^{1/s-1/t}\,\abs{v_h}_{W^{\ell,t}(T)},\quad \forall T\in\TT_h,
	\end{equation*}
	is fulfilled for all $v_h\in \PP_p(T)$.
	\item For given $s\in [1,\infty]$ there exists some $\Ctr>0$ depending on $s, p, d$ and $\kappa$ such that the inequality
	\begin{equation*}
		\norm{v_h}_{L^s(F)}\le \Ctr\,h_T^{-1/s}\,\norm{v_h}_{L^s(T)},\qquad \forall \Edges\ni F\subset T\in\TT_h,
	\end{equation*}
	is fulfilled for all $v_h\in \PP_p(T)$.
	\end{enumerate}
\end{lemma}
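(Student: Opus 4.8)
The two inequalities in \Cref{lem:invserse_trace} are the standard inverse and trace inequalities for polynomials on a shape-regular mesh. The guiding principle for both is to pull everything back to a fixed reference element via an affine map, invoke the fact that all norms on the finite-dimensional space $\PP_p(\widehat T)$ are equivalent, and then push forward, tracking the powers of $h_T$ that the affine scaling produces.

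\textbf{Part a).} Let $\widehat T$ be a fixed reference cell and let $\Phi_T\colon\widehat T\to T$ be the affine diffeomorphism $\Phi_T(\widehat x) = B_T\,\widehat x + b_T$, so that $v_h = \widehat v_h\circ\Phi_T^{-1}$ with $\widehat v_h\in\PP_p(\widehat T)$. The plan is first to establish the scaling estimates
\begin{equation*}
	\norm{B_T}\le \frac{h_T}{\widehat\rho},\qquad
	\norm{B_T^{-1}}\le \frac{\widehat h}{\rho_T}\le \frac{\kappa\,\widehat h}{h_T},\qquad
	\abs{\det B_T} = \frac{\abs{T}}{\abs{\widehat T}},
\end{equation*}
where the bounds on $\norm{B_T}$, $\norm{B_T^{-1}}$ follow from the geometric definitions of $h_T$, $\rho_T$ and the shape-regularity bound $\kappa_T\le\kappa$. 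Next I would record the chain-rule transformation of the Sobolev seminorms under pullback: each derivative of order $k$ contributes a factor controlled by $\norm{B_T^{-1}}^{k}$, and the change of variables in the integral contributes $\abs{\det B_T}^{1/s}$. Combining these gives
\begin{equation*}
	\abs{v_h}_{W^{k,s}(T)} \le c\,\norm{B_T^{-1}}^{k}\,\abs{\det B_T}^{1/s}\,\abs{\widehat v_h}_{W^{k,\widehat s}(\widehat T)},
\end{equation*}
and an analogous estimate bounding $\abs{\widehat v_h}_{W^{\ell,t}(\widehat T)}$ from below by $\abs{v_h}_{W^{\ell,t}(T)}$. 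The central analytic input is then the norm equivalence on $\PP_p(\widehat T)$: since this space is finite-dimensional, the map $\widehat v_h\mapsto \abs{\widehat v_h}_{W^{k,\widehat s}(\widehat T)}$ is bounded by $\abs{\widehat v_h}_{W^{\ell,\widehat t}(\widehat T)}$ up to a constant depending only on $k,\ell,s,t,p,d$ (after checking that the right-hand seminorm is in fact a norm on the relevant quotient, or simply dominating by the full $W^{\ell,t}$-norm and absorbing lower-order terms). Stitching the three estimates together and inserting the scaling bounds converts $\norm{B_T^{-1}}^k$ into $h_T^{-k}$, the reference derivative-lowering into $h_T^{\ell}$, and the two Jacobian factors into $\abs{\det B_T}^{1/s-1/t} = (\abs{T}/\abs{\widehat T})^{1/s-1/t}$, which yields the claimed exponents $h_T^{k-\ell}\,\abs{T}^{1/s-1/t}$ after absorbing $\abs{\widehat T}$ into the constant.

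\textbf{Part b).} This is the trace inequality and follows the same template, now for a facet $F\subset T$. One pulls back to a reference configuration $(\widehat T,\widehat F)$, applies the elementary estimate $\norm{\widehat v_h}_{L^{\widehat s}(\widehat F)}\le c\,\norm{\widehat v_h}_{L^{\widehat s}(\widehat T)}$ (again valid on the finite-dimensional space $\PP_p(\widehat T)$ by norm equivalence, since the restriction-to-$\widehat F$ map is a bounded linear operator between finite-dimensional spaces), and pushes forward. The surface measure on $F$ scales like $\abs{F}\sim h_T^{d-1}$ while the volume measure on $T$ scales like $\abs{T}\sim h_T^{d}$, so the change-of-variables factors on the two sides differ by exactly one power of $h_T$; taking $s$-th roots produces the factor $h_T^{-1/s}$, giving $\norm{v_h}_{L^s(F)}\le \Ctr\,h_T^{-1/s}\,\norm{v_h}_{L^s(T)}$. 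Shape-regularity is what guarantees the implied constants in $\abs{F}\sim h_T^{d-1}$ and $\abs{T}\sim h_T^{d}$ depend only on $\kappa$ (and $d,p$).

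\textbf{Main obstacle.} No single step is deep; the only point requiring genuine care is the norm-equivalence argument on $\PP_p(\widehat T)$, where one must be careful that a seminorm is being bounded and ensure the inequality survives for the \emph{seminorms} appearing in part a) rather than only for full norms — the clean way is to dominate the reference seminorm $\abs{\widehat v_h}_{W^{k,\widehat s}}$ by $\abs{\widehat v_h}_{W^{\ell,\widehat t}}$ directly, which holds because both are continuous on the finite-dimensional $\PP_p(\widehat T)$ and the kernel of the lower seminorm (constants, for $\ell\ge 1$) is annihilated by the higher one as well. The remaining bookkeeping — keeping the exponents on $h_T$ and $\abs{T}$ straight through the affine scaling, and confirming that the final constant depends only on the stated quantities $k,\ell,s,t,p,d,\kappa$ and not on $h$ — is routine but is where sign and exponent errors would most easily creep in, so I would verify the exponent $1/s-1/t$ and the power $k-\ell$ by a dimensional-analysis check against the special case $s=t$, $\ell=0$.
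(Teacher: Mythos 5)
The paper offers no proof to compare against: \cref{lem:invserse_trace} is quoted as a standard fact (the classical inverse and discrete trace inequalities), and the text moves on immediately after stating it. Your reference-element scaling argument --- affine pullback, equivalence of norms on the finite-dimensional space $\PP_p(\widehat T)$ with the kernel check needed for the seminorm case, and the facet-versus-volume Jacobian bookkeeping in part b) --- is the canonical proof of such results, and most of your outline is sound; part b) in particular is correct as written.

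There is, however, a concrete inconsistency in part a) that you assert away rather than resolve. Your own bookkeeping produces
\begin{equation*}
	\abs{v_h}_{W^{k,s}(T)}
	\le c\,\norm{B_T^{-1}}^{k}\,\norm{B_T}^{\ell}\,\abs{\det B_T}^{1/s-1/t}\,\abs{v_h}_{W^{\ell,t}(T)}
	\le c_{\kappa,k,\ell,s,t,p,d}\; h_T^{\ell-k}\,\abs{T}^{1/s-1/t}\,\abs{v_h}_{W^{\ell,t}(T)},
\end{equation*}
since $\norm{B_T^{-1}}^{k}$ contributes $h_T^{-k}$ and $\norm{B_T}^{\ell}$ contributes $h_T^{\ell}$; yet you then claim this ``yields the claimed exponents $h_T^{k-\ell}$.'' The two differ whenever $k>\ell$, and it is the printed statement that is wrong (a sign typo): take a cell $T$ with a vertex at the origin and $v_h(x)=x_1$, $k=1$, $\ell=0$, $s=t=\infty$; then the left-hand side equals $1$ while the right-hand side as printed is at most $\Ctr\,h_T^{2}\to 0$. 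The correct exponent is $h_T^{\ell-k}$, which is exactly what your scaling argument proves; the paper is unaffected by the typo because it only ever invokes part a) with $k=\ell$ (the $L^\infty$--$L^2$ bound on facets in the proof of \cref{lem:lifting_error_CG}) and part b), where the factor is $1$. The dimensional-analysis check you propose in your closing sentence (the case $\ell=0$, $s=t$) is precisely the check that exposes this; it should have been carried out, and the discrepancy flagged as an erratum to the statement, instead of declaring the exponents to agree.
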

Note that we use the same notation for both constants in the previous lemma as they depend on the same quantities.

For our analysis we need the following broken Sobolev spaces
\begin{equation*}
	H_h^2(\Omega)\coloneqq \{v\in L^2(\Omega)\colon \restr{v}{T}\in H^2(T)\quad\forall T\in\TT_h\}.
\end{equation*}
Moreover, we introduce a mesh-dependent norm for the space $H_h^2(\Omega)\cap H_0^1(\Omega)$
\begin{equation}\label{eq:def_Hh2_norm}
	\Hnorm{v}^2 \coloneqq \sum_{T\in\TT_h}\norm{\Hessian v}_{L^2(T)}^2 + 
	\sum_{F \in \InnerEdges} h_F^{-1} \, \norm{\jump{\nabla v\cdot n_F}}_{L^2(F)}^2,
\end{equation}

\subsection{Approximation of the Hessian}
\label{sec:DiscretizationApproach}

Our discretization approach relies on a finite element approximation of the
Hessian of $u$ also referred to as \emph{Hessian recovery}.
For related ideas we refer to \cite{GuoZhangZhao2016} and the references therein.
In this article we study two different approaches.
The first approach uses an approximation with $C^0$-conforming finite
elements.
To illustrate the idea of the construction, consider the integration-by-parts formula for the second derivatives, \ie,
\begin{equation*}
	\int_\Omega \partial_{ij} u\,w\,\dx = -\int_\Omega \partial_i u\,\partial_j w\,\dx
	+ \int_\Gamma \partial_i u\,w\,n_j\,\ds\quad\forall w\in H^1(\Omega),
\end{equation*}
which is valid for all $u\in H^2(\Omega)$ and $i,j=1,\ldots,d$.
Here, $n(x)=(n_1(x),\ldots,n_d(x))^\top$ denotes the outer unit normal vector on $\Gamma$.
Alternatively, one can use the more compact equivalent formulation
\begin{equation*}
	\int_\Omega \nabla^2 u \dprod w\,\dx = - \int_\Omega \nabla u \cdot \Div w\,\dx
	+ \int_\Gamma \nabla u \cdot (w\,n)\,\ds \quad \forall w\in H^1(\Omega;\R^{d\times d}).
\end{equation*}
The Hessian approximation is sought in the finite-dimensional
space
\begin{equation*}
	\HSpaceCG{^{(p)}}(\R^{d\times d})
	\coloneqq
	\{w_h\in C(\overline\Omega;\R^{d\times d}) \colon \restr{w_h}{T}\in \PP_p^{d\times d}\quad \forall T\in \TT_h\}
\end{equation*}
with polynomial degree~$p\in \N$. To shorten the notation we will omit the superscript $(p)$, except when a different polynomial degree is used. The previous integral identity motivates the following definition. 
\begin{definition}[Continuous Galerkin Hessian]
	For each $u\in H_h^2(\Omega)$, the \emph{discrete Hessian} $\FEHessianCG{u}\in \HSpaceCG(\R^{d\times d})$ is defined by the variational problem
	\begin{equation}
		\label{eq:DiscreteHessianCG}
		\int_\Omega \FEHessianCG{u} \dprod w_h\,\dx
		=
		-\int_\Omega \nabla u \cdot \Div w_h\,\dx
		+ \int_\Gamma \nabla u \cdot (w_h\,n)\,\ds \quad \forall w_h\in \HSpaceCG(\R^{d\times d}).
	\end{equation}
\end{definition}
A further strategy is an approximation by piecewise polynomial but discontinuous functions.
To this end, we define the space
\begin{equation*}
	\HSpaceDG{^{(p)}}(\R^{d\times d})\coloneqq\{w_h\in L^\infty(\Omega; \R^{d\times d})\colon \restr{w_h}{T}\in \PP_p^{d\times d}\quad\forall T\in \TT_h\}.
\end{equation*}

We obtain a Hessian approximation by discretizing the element-wise integration-by-parts formula
\begin{align*}
	\int_\Omega\partial_{ij} u\,w\,\dx
	&= \sum_{T\in\TT_h}\left[-\int_T \partial_i u\,\partial_j w\,\dx
	+ \int_{\partial T} \partial_i u\,w\,n_{\partial T,j}\,\ds\right] \\
	&= -\sum_{T\in\TT_h}\int_T \partial_i u\,\partial_j w\,\dx
	+ \sum_{F\in \InnerEdges} \int_F \avg{\partial_i u}\,\jump{w}\,n_{F,j}\,\ds
	+ \sum_{F\in \Edges\setminus\InnerEdges} \int_F \partial_i u\,w\,n_j\,\ds
\end{align*}
which is valid for all $u\in H^2(\Omega)$ and $w\in H_h^1(\Omega)$.
This motivates the following definition:
\begin{definition}[Discontinuous Galerkin Hessian]
	For each $u\in H_h^2(\Omega)$, the \emph{DG Hessian} $\FEHessianDG{u}\in \HSpaceDG(\R^{d\times d})$ is defined by
	\begin{equation}
		\label{eq:DiscreteHessianDG}
		\int_\Omega \FEHessianDG{u} \dprod w_h\,\dx
		= - \sum_{T\in \TT_h} \int_T \nabla u\cdot \Div w_h\,\dx
		+ \sum_{F\in\Edges} \int_F \avg{\nabla u}\cdot\jump{w_h}\,\ds
	\end{equation}
	for all $w_h\in \HSpaceDG(\R^{d\times d})$.
\end{definition}

Many results in this article are independent of the choice of the Hessian
approximation. In this case we drop the superscript and simply write
$\FEHessian{}$ and $\HSpace$ which means either $\FEHessianCG{}$ and
$\HSpaceCG{}$ or $\FEHessianDG{}$ and $\HSpaceDG{}$.

We conclude this section with the following approximation result:
\begin{lemma}\label{lem:best_approximation_Hessian}
	Let $u\in H^2(\Omega)$ be given.
	The approximate Hessian $\FEHessian{u}$ (either $\FEHessianCG{u}$ or $\FEHessianDG{u}$)
	coincides with the $L^2(\Omega)$-projection of $\nabla^2 u$ onto $\HSpace(\R^{d\times d})$, \ie,
	\begin{equation*}
		\norm{\nabla^2 u - \FEHessian{u}}_{L^2(\Omega)} =
		\inf_{\W \in \HSpace(\R^{d\times d})} \norm{\nabla^2 u - \W}_{L^2(\Omega)}.
	\end{equation*}
	Moreover, there holds the stability estimate
	\begin{equation}\label{eq:stability_hessian}
		\norm{\FEHessian{u}}_{L^2(\Omega)} \le c_d\,\Ctr\,\Hnorm{u}\quad\forall u\in H_h^2(\Omega).
	\end{equation}
\end{lemma}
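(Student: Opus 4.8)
I would treat the two assertions separately, because the best-approximation equality concerns a globally $H^2(\Omega)$ function whereas the stability bound lives on the broken space, on which $\Hnorm{\cdot}$ is a genuine norm. For the first claim, denote by $\Pi_h$ the $L^2(\Omega)$-orthogonal projection onto $\HSpace(\R^{d\times d})$, characterised by $\int_\Omega \Pi_h\Hessian u\dprod w_h\,\dx=\int_\Omega \Hessian u\dprod w_h\,\dx$ for all $w_h\in\HSpace(\R^{d\times d})$. It is then enough to check that $\FEHessian{u}$ satisfies this same Galerkin identity, since the equality with the infimum is the standard orthogonality property of Hilbert-space projections. For $u\in H^2(\Omega)$ the right-hand side of the CG definition \eqref{eq:DiscreteHessianCG} is exactly the global integration-by-parts formula displayed above it, and because $\HSpaceCG(\R^{d\times d})\subset H^1(\Omega;\R^{d\times d})$ it equals $\int_\Omega\Hessian u\dprod w_h\,\dx$. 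In the DG case $\nabla u\in H^1(\Omega;\R^d)$ is single-valued across interior facets, so every jump $\jump{\nabla u}$ vanishes and the right-hand side of \eqref{eq:DiscreteHessianDG} collapses to the same expression.

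For the stability estimate I would insert an arbitrary $w_h\in\HSpace(\R^{d\times d})$ and integrate by parts element by element, which is admissible since $u\in H_h^2(\Omega)$. In the CG case the continuity of $w_h$ lets the element-boundary contributions combine, the boundary integrals cancel against the boundary term of \eqref{eq:DiscreteHessianCG}, and one arrives at $\int_\Omega\FEHessianCG{u}\dprod w_h\,\dx=\sum_{T\in\TT_h}\int_T\Hessian u\dprod w_h\,\dx-\sum_{F\in\InnerEdges}\int_F w_h\dprod\jump{\nabla u}\,\ds$. In the DG case the elementary facet identity $\nabla u^+\cdot(w_h^+n^+)+\nabla u^-\cdot(w_h^-n^-)=\avg{\nabla u}\cdot\jump{w_h}+\avg{w_h}\dprod\jump{\nabla u}$ makes the $\avg{\nabla u}\cdot\jump{w_h}$ terms cancel against those of \eqref{eq:DiscreteHessianDG}, yielding the analogous representation with $\avg{w_h}$ in place of $w_h$.

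It remains to bound the two contributions. The volume term is handled by Cauchy-Schwarz, $\big(\sum_{T\in\TT_h}\norm{\Hessian u}_{L^2(T)}^2\big)^{1/2}\norm{w_h}_{L^2(\Omega)}\le\Hnorm{u}\,\norm{w_h}_{L^2(\Omega)}$. For the facet term the crucial observation is that, $u$ being $H^1$-conforming, its tangential gradient does not jump, so $\jump{\nabla u}$ is purely normal and $\norm{\jump{\nabla u}}_{L^2(F)}=\norm{\jump{\nabla u\cdot n_F}}_{L^2(F)}$, precisely the quantity weighted by $h_F^{-1}$ in \eqref{eq:def_Hh2_norm}. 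Applying the trace inequality of Lemma~\ref{lem:invserse_trace}\,b) to $w_h$ (respectively to each side of $\avg{w_h}$), using $h_F\simeq h_T$ from shape regularity and bounded variation, reinserting the weight $h_F^{-1}$ and concluding with a discrete Cauchy-Schwarz inequality (with at most $d+1$ facets per element) bounds the facet term by $c_d\,\Ctr\,\Hnorm{u}\,\norm{w_h}_{L^2(\Omega)}$. Together this gives $\abs{\int_\Omega\FEHessian{u}\dprod w_h\,\dx}\le c_d\,\Ctr\,\Hnorm{u}\,\norm{w_h}_{L^2(\Omega)}$; choosing $w_h=\FEHessian{u}$ and dividing by $\norm{\FEHessian{u}}_{L^2(\Omega)}$ finishes the proof.

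The main obstacle I anticipate is the careful bookkeeping of the interface and boundary terms — in particular the DG product identity and the verification that all $\avg{\nabla u}\cdot\jump{w_h}$ and boundary contributions cancel — together with the reduction of the matrix-valued jump $\jump{\nabla u}$ to the scalar normal jump $\jump{\nabla u\cdot n_F}$. The latter is exactly where the $H^1$-conformity of $u$ is indispensable, since otherwise the jumps of the tangential derivatives would not be controlled by $\Hnorm{\cdot}$ and the estimate in this norm would fail.
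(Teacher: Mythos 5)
Your proposal is correct and follows essentially the same route as the paper: the projection property is verified by checking the Galerkin orthogonality identity through (global, resp.\ element-wise) integration by parts, and the stability bound is obtained from the cell-wise integration-by-parts representation, the reduction of $\jump{\nabla u}$ to the normal jump $\jump{\nabla u\cdot n_F}$ via $H^1$-conformity, the discrete trace inequality, and finally testing with $w_h=\FEHessian{u}$. The only difference is that you carry out the DG case explicitly (via the standard average--jump product identity), whereas the paper handles it by citing Neilan's Lemma~2.1; your explicit remark that the estimate genuinely requires continuity of $u$ across facets is a point the paper's statement (``for all $u\in H_h^2(\Omega)$'') glosses over.
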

\begin{proof}
	The desired result follows from the definition \eqref{eq:DiscreteHessianCG}
	and the integration-by-parts formula which yields
	\begin{equation*}
		\int_\Omega \FEHessianCG{u} \dprod w_h\,\dx
		= -\int_\Omega \nabla u \cdot \Div w_h\,\dx
		+ \int_\Gamma \nabla u\cdot (w_h\,n)\,\ds
		= \int_\Omega \nabla^2 u \dprod w_h\,\dx
	\end{equation*}
	for all $w_h\in \HSpaceCG(\R^{d\times d})$. This implies that $\FEHessianCG{u}$ is the $L^2(\Omega)$-projection of $\nabla^2 u$ onto $\HSpaceCG(\R^{d\times d})$.
	To show the stability result we exploit the cell-wise integration-by-parts formula, taking into account the equality $\nabla u\cdot (w_h\,n) = (\nabla u\otimes n) \dprod w_h$, the Cauchy-Schwarz inequality and the discrete trace \cref{lem:invserse_trace}.
	We obtain for each $w_h\in \HSpaceCG(\R^{d\times d})$
	\begin{align*}
		\int_\Omega \FEHessianCG{u}\dprod w_h\,\dx
		&= - \int_\Omega \nabla u\cdot \Div(w_h)\,\dx + \int_\Gamma \nabla u\cdot (w_h\,n)\,\ds \\
		&= \sum_{T\in\TT_h} \int_T\nabla^2 u\dprod w_h\,\dx
		- \sum_{F\in\InnerEdges} \int_F \jump{\nabla u}\dprod w_h\,\ds \\
		&\le c_d\Biggh(){\sum_{T\in\TT_h} \norm{\nabla^2 u}_{L^2(T)}^2
		+ \Ctr^2\,\sum_{F\in\InnerEdges} h_F^{-1}\norm{\jump{\nabla u}}_{L^2(F)}^2}^{1/2}\,\norm{w_h}_{L^2(\Omega)}.
	\end{align*}
	With a simple computation taking into account that $u_h$ is continuous at the facets we deduce $\norm{\jump{\nabla u}}_{L^2(F)} = \norm{\jump{\nabla u\cdot n_F}}_{L^2(F)}$.
	Finally, we test the previous inequality with $w_h = \FEHessianCG{u}$ and
	divide the left- and right-hand side by $\norm{\FEHessianCG{u}}_{L^2(\Omega)}$ to conclude \eqref{eq:stability_hessian}.

	With similar arguments one can conclude the same results for the DG Hessian $\FEHessianDG{}$. The proof can be found in \cite[Lemma~2.1]{Neilan2017}.
\end{proof}

\subsection{A finite element scheme}
\label{sec:discretization_approach}

The finite element approximations of our problem \eqref{eq:strong_form_laplace} are sought in
the space of continuous Lagrange finite elements of order $p\in\N$, \ie,
\begin{equation*}
	\uSpace\coloneqq\{v_h\in C(\overline\Omega)\colon \restr{v_h}{T}\in \PP_p \quad \forall T\in\TT_h\},
\end{equation*}
and moreover, we define $\uSpaceZero = \uSpace\cap H_0^1(\Omega)$ to incorporate essential boundary conditions. 
The polynomial degree $p\ge 2$ is the same as for the space $\HSpace$. Later, we will see that this choice leads to an optimal balance of the approximation errors for the Hessian $\nabla^2u$ and the solution $u$.
Motivated by the strong formulation of the continuous problem \eqref{eq:strong_form_laplace}
we test the discrete equations with the finite element Laplacian
\begin{equation*}
	\FELaplace{v}\coloneqq \trace\,\FEHessian{v} = \sum_{i=1}^d\hii{v} \in \HSpace(\R).
\end{equation*}
The bilinear and linear forms we are going to use in the discrete scheme are defined by
\begin{align*}
	a_h(u_h, v_h)
	&\coloneqq
	\int_\Omega \gamma A \dprod \FEHessian{u_h}\,\FELaplace{v_h}\,\dx, \\
	F_h(v_h) &\coloneqq \int_\Omega \gamma f\,\FELaplace{v_h}\,\dx.
\end{align*}
The discrete problem reads
\begin{equation}
	\label{eq:DiscreteProblem}
	\text{Find}\ u_h \in \uSpaceZero \text{ such that } a_h(u_h,v_h) + J_h(u_h,v_h)
	=
	F_h(v_h)
	\quad\forall v_h \in \uSpaceZero
	.
\end{equation}
The bilinear form $J_h\colon \uSpaceZero\times \uSpaceZero\to\R$
may contain several stabilization terms in order to guarantee discrete coercivity. The specific form of the stabilization terms will be introduced later.

The nodal basis functions of $\uSpaceZero$ and $\HSpace$ are denoted by
\begin{equation*}
	\uSpaceZero = \Span\{\varphi_\ell\}_{\ell=1}^{N_V},\qquad
	\HSpace = \Span\{\psi_\ell\}_{\ell=1}^{N_W}.
\end{equation*}
For some function $u_h\in \uSpaceZero$ we denote by
$\bu = (u_1,\ldots,u_{N_V})^\top$ the coefficient vector
satisfying $u_h = \sum_{\ell=1}^{N_V} u_\ell\,\varphi_\ell$.
Analogously, we use the vector representation $\bh_{ij}=(h_{ij,1},\ldots,h_{ij,N_W})^\top$ for
the Hessian approximations $\FEHessian{u_h}$, \ie,
$\hij{u_h} = \sum_{\ell=1}^{N_W} h_{ij,\ell}\,\psi_\ell$ for $i,j=1,\ldots,d$.

To realize our algorithm with $\FEHessian{} = \FEHessianCG{}$ we first assemble the matrices and load vector
\begin{align*}
	M_W &= \varh(){\int_\Omega \psi_\ell\,\psi_k}_{k,\ell} \in\R^{N_W\times N_W}, && \text{(mass matrix in $\HSpace$)}\\
	C_{ij} &= \varh(){- \int_\Omega \frac{\partial \varphi_\ell}{\partial x_i}\, \frac{\partial \psi_k}{\partial x_j}
	+ \int_\Gamma \frac{\partial \varphi_\ell}{\partial x_i} \,\psi_k\, n_j}_{k,\ell}\in \R^{N_W\times N_V}, && \text{(partial mixed stiffness matrix)} \\
	B_{ij} &= \varh(){\int_\Omega \gamma A_{ij} \,\psi_\ell\,\psi_k}_{k,\ell}\in\R^{N_W\times N_W},
	&& \text{(weighted mass matrix in $\HSpace$)} \\
	S &= J_h\varh(){\varphi_\ell,\varphi_k}_{k,\ell}\in\R^{N_V\times N_V},
	&& \text{(stabilization matrix)} \\
	\bf_{W} &= \varh(){\int_\Omega \gamma f\,\psi_k}_{k}\in\R^{N_W}. && \text{(load vector w.r.t.\ $\HSpace$)}
\end{align*}
In the case $\FEHessian{} = \FEHessianDG{}$ the matrices $C_{ij}$ have to be modified according to the right-hand side of \eqref{eq:DiscreteHessianDG}. Moreover, the dimension of the  matrices increases as the number of degrees of freedom $N_W$ is higher for the function space $\HSpaceDG$.
Obviously, the equations \eqref{eq:DiscreteHessianCG} or \eqref{eq:DiscreteHessianDG}
with $u$ replaced by $u_h$ can be expressed by means of
\begin{equation}
	\label{eq:EqSystem_Hessian}
	M_W\,\bh_{ij} = C_{ij}\,\bu.
\end{equation}
The application of $\FELaplace{}$ to the test function $v_h\in \uSpaceZero$
represented by the coefficient vector $\bv\in \R^{N_V\times N_V}$ leads to a new function
$w_h\coloneqq\FELaplace{v_h}$ with coefficient vector
$\bw\in\R^{N_W}$ and can be computed by means of
\begin{equation}
	\label{eq:EqSystem_TestFunction}
	M_W\,\bw = \sum_{i=1}^d C_{ii}\, \bv.
\end{equation}
The right-hand side of \eqref{eq:DiscreteProblem} can be evaluated by means of
\begin{equation}\label{eq:Rhs_discrete}
    F_h(v_h) = \int_\Omega \gamma f\, \FELaplace{v_h}\,\dx = \bv^\top \Bigh(){\sum_{i=1}^d C_{ii}}^\top M_W^{-1} \bf_W.
\end{equation}
A representation for the left-hand side follows after insertion of \eqref{eq:EqSystem_Hessian}--\eqref{eq:Rhs_discrete} into \eqref{eq:DiscreteProblem}.
This yields
\begin{equation}
	\label{eq:System_u}
	a_h(u_h, v_h) + J_h(u_h,v_h) 
	= 
	\bv^\top \, \Bigh(){\sum_{i=1}^d C_{ii}}^\top \, M_W^{-1} \, \Bigh(){\sum_{i,j=1}^d B_{ij}\, M_W^{-1}\, C_{ij}} \bu 
	+ 
	\bv^\top S\,\bu.
\end{equation}
Consequently, problem \eqref{eq:DiscreteProblem} is equivalent to
\begin{equation}
	\label{eq:discrete_system}
	\varh(){ \Bigh(){\sum_{i=1}^d C_{ii}}^\top \, M_W^{-1} \, \Bigh(){\sum_{i,j=1}^d B_{ij}\, M_W^{-1}\, C_{ij}} + S} \bu
	=
	\Bigh(){\sum_{i=1}^d C_{ii}}^\top M_W^{-1} \bf_W
	\eqqcolon \bf_V
	.
\end{equation}
Although the system matrix cannot be assembled explicitly, one can compute matrix-vector products, each of 
which requires the solution of $d^2+1$ linear equation systems for the mass matrix $M_W$.
In our numerical experiments we precomputed an LU factorization of $M_W$.
Each evaluation of $M_W^{-1}$ then corresponds to an inexpensive forward-backward substitution.
The non-symmetric system \eqref{eq:discrete_system} can be efficiently solved by a preconditioned \gmres\ algorithm.
As a preconditioner we utilize the matrix
\begin{equation}
	\label{eq:preconditioner}
    P = \Bigh(){\sum_{i=1}^d C_{ii}}^\top \, \widehat M_W^{-1}\sum_{i,j=1}^d  \widehat B_{ij}\, \widehat M_W^{-1}\, C_{ij}+S,
\end{equation}
where $\widehat M_W^{-1}$ is the inverse of the main diagonal of $M_W$ and $\widehat B_{ij}$ is the main diagonal of $B_{ij}$.
This allows us to assemble~$P$ explicitly.
Employing $\widehat B_{ij}$ instead of $B_{ij}$ yields a sparser preconditioner and in case of a problem with vanishing off-diagonal entries of $A$, i.e., $A_{ij} = 0$ for $i \neq j$, a symmetric preconditioner $P$.
A direct solver is then used to solve the systems of linear equations involving $P$.
Note that it is not appropriate to use the lumped mass matrix as this might yield a singular
matrix whenever the polynomial degree of the space $\HSpace$ is larger than one.
The numerical experiments conducted in \cref{sec:experiments} indicate that the preconditioned \gmres\ method for~\eqref{eq:discrete_system} is robust with respect to mesh refinement. For a more sophisticated preconditioning strategy for non-divergence form PDEs we refer to \cite{Smears2018}, where a domain decomposition preconditioner is studied.

An alternative viable strategy is the solution of a block system equivalent to \eqref{eq:discrete_system}.
This becomes particularly useful if, in addition to the solution vector $\bu$, one is interested in the finite-element Hessian, \eg, for the solution of HJB equations.
To this end, we use the substitution
from \eqref{eq:EqSystem_Hessian} as well as
\begin{equation*}
	M_W\,\bp = \sum_{i,j=1}^d B_{ij}\,\bh_{ij}
\end{equation*}
and arrive (in case $d=2$) at the equation system
\begin{equation*}
	\begin{pmatrix}
		M_W & & & & & -C_{11} \\
		& M_W & & & & -C_{12} \\
		& & M_W & & & -C_{21} \\
		& & & M_W & & -C_{22} \\
		-B_{11} & -B_{12} & -B_{21} & -B_{22} & M_W & \\
		& & & & C_{11}^\top + C_{22}^\top & S
	\end{pmatrix}
	\begin{pmatrix}
		\bh_{11} \\ \bh_{12} \\
		\bh_{21} \\ \bh_{22} \\
		\bp \\ \bu
	\end{pmatrix}
	=
	\begin{pmatrix}
		0 \\ 0 \\ 0 \\ 0 \\ 0 \\ \bf_V
	\end{pmatrix}
\end{equation*}
equivalent to~\eqref{eq:discrete_system}.
The modification for the three-dimensional case is obvious.

\subsection{Well-posedness of the discrete scheme}

The scheme \eqref{eq:DiscreteProblem} can be interpreted as a non-conforming discretization of the
variational problem \eqref{eq:weak_form} as the usage of approximate Hessians and Laplacians implies $a\ne a_h$ and $F\ne F_h$, and there also holds $\uSpaceZero\not\subset X=H^2(\Omega)\cap H_0^1(\Omega)$.
\begin{lemma}
	\label{lem:H2h_norm_bound}
	The inequality
	\begin{equation}\label{eq:discrete_norm_bound}
		\Hnorm{v}^2 \le c_{\kappa,d} \, 
		\sum_{T\in\TT_h} \varh(){h_T^{-2}\,\norm{\nabla v}_{L^2(T)}^2
		+ \norm{\Hessian v}_{L^2(T)}^2}
	\end{equation}
	is valid for all $v\in H_h^2(\Omega)$.
	Furthermore, $\Hnorm{\cdot}$ is a norm in $X=H^2(\Omega) \cap H_0^1(\Omega)$.
\end{lemma}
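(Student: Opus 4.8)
The plan is to observe that the first contribution to $\Hnorm{v}^2$ in \eqref{eq:def_Hh2_norm}, the volume term $\sum_{T\in\TT_h}\norm{\Hessian v}_{L^2(T)}^2$, already appears on the right-hand side of \eqref{eq:discrete_norm_bound}. Hence the whole task reduces to estimating the facet penalty term $\sum_{F\in\InnerEdges} h_F^{-1}\norm{\jump{\nabla v\cdot n_F}}_{L^2(F)}^2$ by the claimed bound.

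First I would fix an interior facet $F=\partial T^+\cap\partial T^-$ and split the jump. Because $\abs{n_F}=1$ and $\jump{\nabla v\cdot n_F}=(\nabla v)^+\cdot n_F-(\nabla v)^-\cdot n_F$, the pointwise estimate $\abs{\jump{\nabla v\cdot n_F}}^2\le 2\,(\abs{(\nabla v)^+}^2+\abs{(\nabla v)^-}^2)$ holds on $F$, so it suffices to control $\norm{\nabla v}_{L^2(F)}^2$ from each of the two adjacent elements. Since $v$ lies only in $H_h^2(\Omega)$ and is \emph{not} piecewise polynomial, the discrete trace estimate of \cref{lem:invserse_trace} does not apply; instead I would employ the scaled (continuous) trace inequality, which for $w\in H^1(T)$ reads $\norm{w}_{L^2(\partial T)}^2\le c_\kappa\,(h_T^{-1}\norm{w}_{L^2(T)}^2+h_T\,\abs{w}_{H^1(T)}^2)$. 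Applying it componentwise to $w=\nabla v$ and noting $\abs{\nabla v}_{H^1(T)}=\norm{\Hessian v}_{L^2(T)}$ gives, for $T\in\{T^+,T^-\}$, the bound $\norm{\nabla v}_{L^2(\partial T)}^2\le c_\kappa\,(h_T^{-1}\norm{\nabla v}_{L^2(T)}^2+h_T\,\norm{\Hessian v}_{L^2(T)}^2)$.

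Next I would absorb the mesh weights. Since $F$ is a facet of \emph{both} $T^+$ and $T^-$, shape regularity yields $h_F\ge c_\kappa\,h_T$ for $T\in\{T^+,T^-\}$, so that $h_F^{-1}h_T^{-1}\le c_\kappa\,h_T^{-2}$ and $h_F^{-1}h_T\le c_\kappa$. Combining this with the previous display produces
\[
	h_F^{-1}\norm{\jump{\nabla v\cdot n_F}}_{L^2(F)}^2
	\le c_\kappa\!\!\sum_{T\in\{T^+,T^-\}}\!\!\bigl(h_T^{-2}\norm{\nabla v}_{L^2(T)}^2+\norm{\Hessian v}_{L^2(T)}^2\bigr).
\]
Summing over $F\in\InnerEdges$ and using that each simplex has at most $d+1$ facets, so that every element is counted a bounded number of times, delivers the penalty estimate with a constant $c_{\kappa,d}$; adding back the volume term yields \eqref{eq:discrete_norm_bound}. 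I expect the careful bookkeeping of the mesh-size weights through shape regularity to be the only delicate point, as the trace inequality itself is standard and, notably, no assumption on the variation of neighbouring element sizes is actually required here.

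For the final assertion I note that $\Hnorm{\cdot}$ is a seminorm by inspection of \eqref{eq:def_Hh2_norm}, so only definiteness on $X$ remains. For $v\in X=H^2(\Omega)\cap H_0^1(\Omega)$ the gradient possesses no jumps across interior facets, whence the penalty term vanishes and $\Hnorm{v}^2=\norm{\Hessian v}_{L^2(\Omega)}^2=\abs{v}_{H^2(\Omega)}^2$. If $\Hnorm{v}=0$, then $\Hessian v=0$ almost everywhere, so $v$ is affine on the connected (indeed convex) domain $\Omega$; the vanishing trace of $v$ on $\Gamma$ then forces $v\equiv 0$. Consequently $\Hnorm{\cdot}$ is a norm on $X$.
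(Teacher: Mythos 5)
Your proof is correct and follows essentially the same route as the paper's: split the jump by the triangle inequality into traces from the two adjacent elements, apply the scaled continuous trace inequality (correctly noting the discrete one from \cref{lem:invserse_trace} is unavailable since $v\in H_h^2(\Omega)$ is not piecewise polynomial), and absorb the mesh weights via shape regularity $c_\kappa^{-1}h_T\le h_F\le c_\kappa h_T$. The only difference is that you spell out the finite-overlap counting and the ``standard arguments'' for definiteness on $X$ (affine function with zero boundary trace vanishes), which the paper leaves implicit.
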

\begin{proof}
	To show \eqref{eq:discrete_norm_bound} we merely have to discuss the jump
	terms in the definition \eqref{eq:def_Hh2_norm}.
	To this end, we apply the triangle inequality
	\begin{equation}\label{eq:splitting_jump_term}
		\sum_{F \in \InnerEdges} h_F^{-1} \norm{\jump{\nabla v\cdot n_F}}_{L^2(F)}^2
		\le \sum_{F \in \InnerEdges}
		\sum_{\genfrac{}{}{0pt}{}{T\in\TT_h}{F\subset T}}
		h_F^{-1} \, \norm{\restr{\nabla v}{T}}_{L^2(F)}^2
	\end{equation}
	and a trace theorem on a reference setting
	\begin{align*}
		\norm{\restr{\nabla v}{T}}_{L^2(F)}
		&\le c_\kappa \left(h_F^{-1/2}\,\norm{\nabla v}_{L^2(T)} + h_F^{1/2}\,\norm{\nabla^2 v}_{L^2(T)}\right).
	\end{align*}
	Using also the assumed shape regularity, which implies $c_\kappa^{-1}\,h_T\le h_F\le c_\kappa\, h_T$ for $F\subset T$, we infer \eqref{eq:discrete_norm_bound}. The fact that $\Hnorm{\cdot}$ is a norm in $H^2(\Omega)\cap H_0^1(\Omega)$ follows from standard arguments.
\end{proof}

The main ingredient for the proof of the existence result for strong solutions (\cref{lem:existence}) is a Miranda-Talenti estimate of the form $\abs{u}_{H^2(\Omega)} \le \norm{\Delta u}_{L^2(\Omega)}$ which is valid, \eg, if the underlying domain $\Omega$ is convex.
To show well-posedness of our discrete scheme we first have to prove a discrete Miranda-Talenti estimate. 
A similar result, but for a discretization using the element-wise exact Hessian and Laplacian, is proved in \cite[Theorem~1]{NeilanWu2019}. 
We begin with the following auxiliary result.
\begin{lemma}\label{lem:lifting_error_CG}
	For each polynomial degree $p\in \N$, there exists a lifting operator $\LiftPP\colon\HSpaceDG \to \HSpaceCG$ satisfying the estimate
	\begin{equation*}
		\norm{v_h - \LiftPP(v_h)}_{L^2(\Omega)}
		\le C\Biggh(){\sum_{F\in \InnerEdges} h_F\,\norm{\jump{v_h}}_{L^2(F)}^2}^{1/2}
	\end{equation*}
	for all $v_h\in \HSpaceDG$, where $C>0$ depends on $d, p, \Cv, \Ctr$ and $\kappa$.
\end{lemma}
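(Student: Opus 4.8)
The plan is to realize $\LiftPP$ as the canonical nodal averaging (Oswald) operator and then to bound the $L^2$-distance cell by cell through the Lagrange degrees of freedom. Both $\HSpaceDG$ and $\HSpaceCG$ are spanned by the usual degree-$p$ Lagrange basis attached to the nodes of $\TT_h$; the only structural difference is that a continuous function is single-valued at each node, whereas a discontinuous $v_h$ carries, at a node $z$ shared by several cells, one value $v_h|_T(z)$ per incident cell $T$. I would define $\LiftPP(v_h)$ to be the continuous piecewise polynomial whose value at each node $z$ is the arithmetic mean $\frac{1}{\abs{\omega_z}}\sum_{T\in\omega_z} v_h|_T(z)$ over the set $\omega_z$ of cells containing $z$ (here $\abs{\omega_z}$ denotes the number of such cells). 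By construction $\LiftPP$ acts as the identity on $\HSpaceCG$, so it measures precisely the failure of $v_h$ to be continuous.

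First I would localize. On a fixed cell $T$ the difference $e \coloneqq (v_h - \LiftPP(v_h))|_T$ is a polynomial of degree at most $p$ whose nodal values vanish at every node interior to $T$, while at a shared node $z$ it equals $v_h|_T(z) - \frac{1}{\abs{\omega_z}}\sum_{T'\in\omega_z} v_h|_{T'}(z) = \frac{1}{\abs{\omega_z}}\sum_{T'\in\omega_z}\bigl(v_h|_T(z) - v_h|_{T'}(z)\bigr)$. By the equivalence of the $L^2(T)$-norm with the scaled Euclidean norm of the nodal vector for polynomials of fixed degree (a scaling argument on the reference cell), one obtains $\norm{e}_{L^2(T)}^2 \le c\,h_T^d \sum_z \abs{e(z)}^2$, the sum running over the shared nodes of $T$.

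Next I would bound each nodal discrepancy $v_h|_T(z) - v_h|_{T'}(z)$ by facet jumps. For two cells adjacent across a facet $F\ni z$ this difference is exactly $\pm\jump{v_h}(z)$; for cells $T,T'$ both containing $z$ but not adjacent, I would chain through the cells arranged around $z$, writing the difference as a telescoping sum of jumps across the facets in the star of $z$. Shape regularity bounds $\abs{\omega_z}$ and the number of facets in each star uniformly in terms of $\kappa$, so only a controlled number of jump terms occur. Each pointwise value $\abs{\jump{v_h}(z)}$ is then converted into an $L^2(F)$-norm via norm equivalence for degree-$p$ polynomials on the facet, giving $\abs{\jump{v_h}(z)}^2 \le c\,h_F^{-(d-1)}\norm{\jump{v_h}}_{L^2(F)}^2$ with a constant of the type $\Ctr$ from \cref{lem:invserse_trace}.

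Assembling these bounds and using shape regularity $h_F\sim h_T$ together with the bounded-variation hypothesis (constant $\Cv$) to make neighbouring mesh sizes comparable, the factor $h_T^d\,h_F^{-(d-1)}$ collapses to a single power $h_F$, so that $\norm{e}_{L^2(T)}^2 \le c\sum_F h_F\,\norm{\jump{v_h}}_{L^2(F)}^2$ over the facets touching $T$. Summing over all $T\in\TT_h$ and using that each facet in $\InnerEdges$ is counted only a bounded number of times yields the asserted estimate with $C=C(d,p,\Cv,\Ctr,\kappa)$. The main obstacle is the treatment of nodes shared by more than two cells: expressing the deviation of a one-sided value from the nodal average as a genuine sum of facet jumps requires the chaining argument around the node-star, and carefully tracking how many jump contributions each facet receives is what keeps the constant uniform. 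The two-cell facet nodes are immediate; it is this higher-multiplicity bookkeeping that carries the technical weight.
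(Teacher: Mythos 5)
Your proposal is correct and follows essentially the same route as the paper's proof: you construct the same nodal-averaging (Oswald) operator, distinguish the same three node types (interior nodes, facet-interior nodes, and vertex/edge nodes handled by chaining through the star of cells), and use the same inverse inequality and scaling $h_T^d\,h_F^{-(d-1)}\sim h_F$ to assemble the facet-jump bound. The only cosmetic difference is that you phrase the local estimate via nodal-vector norm equivalence, whereas the paper expands in the Lagrange basis and bounds $\int_T\varphi_{T,i}^2\,\dx\le\abs{T}$ directly.
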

\begin{proof}
	The proof is similar to the proof of \cite[Theorem 2.2]{KarakashianPascal2003}
	and \cite[Theorem~2.2]{Burman2005}, but in both articles a slightly
	different setting is considered. For the convenience of the reader
	we repeat the proof.

	We denote by $x_{T,i}$ the Lagrange points of the local finite element $(T,\PP_{p},\Sigma)$.
	That is, the functionals $\sigma_{T,i}\in\Sigma$, $i=1,\ldots,s$, with $s\coloneqq \frac12\,(p+1)\,(p+2)$ if $d=2$ and $s\coloneqq\frac16\,(p+1)\,(p+2)\,(p+3)$ if $d=3$, have the form $\sigma_{T,i}(v) = v(x_{T,i})$.
	The sets $\TT_{T,i}\coloneqq\{T\in\TT_h\colon x_{T,i}\in T\}$ contain all elements of $\TT_h$ sharing the Lagrange point $x_{T,i}$ and we denote the number of cells in $\TT_{T,i}$ by $\#\TT_{T,i}$.
	In a similar way we define the patch of facets $\FF_{T,i}\coloneqq\{F\in\InnerEdges\colon x_{T,i}\in F\}$ having $x_{T,i}$ as a vertex.
	Moreover, $\{\varphi_{T,i}\}_{i=1}^s$ is the nodal basis of $(T,\PP_{p},\Sigma)$, \ie, $\sigma_{T,i}(\varphi_{T,j}) = \delta_{ij}$ holds for all $i,j=1,\ldots,s$. 
	The precise definition of our lifting operator is
	\begin{equation*}
		\restr{\LiftPP(v_h)}{T}
		\coloneqq \sum_{i=1}^s \Biggh(){\frac{1}{\# \TT_{T,i}} \sum_{T'\in\TT_{T,i}} (\restr{v_h}{T'})(x_{T,i})}\,\varphi_{T,i}.
	\end{equation*}
	Next, we derive local estimates for the lifting error on a single element $T\in \TT_h$.
	From the definition of $\LiftPP$ and the triangle inequality we conclude
	\begin{align}
		\label{eq:lifting_error_est_1}
		\norm{v_h - \LiftPP(v_h)}_{L^2(T)}^2
		&
		= 
		\int_T \left(\sum_{i=1}^s \Biggh(){(\restr{v_h}{T})(x_{T,i}) - \frac{1}{\# \TT_{T,i}} \sum_{T'\in\TT_{T,i}} (\restr{v_h}{T'})(x_{T,i})} \varphi_{T,i}\right)^2 \dx 
		\nonumber\\
		& 
		\le \int_T \Biggh(){\sum_{i=1}^s \frac1{\#\TT_{T,i}}\sum_{T'\in \TT_{T,i}}
		\Bigabs{\restr{v_h}{T}-\restr{v_h}{T'}}(x_{T,i})\, \abs{\varphi_{T,i}}}^2 \dx.
	\end{align}
	We distinguish several cases:
	if $x_{T,i}$ is a Lagrange point in the interior of $T$ or in the interior of a boundary facet $F\in \Edges$ with $F\subset\Gamma$, then $\TT_{T,i} = \{T\}$ holds and consequently
	\begin{equation*}
		\sum_{T'\in \TT_{T,i}}\Bigabs{\restr{v_h}{T}-\restr{v_h}{T'}}(x_{T,i}) = 0.
	\end{equation*}
	If $x_{T,i}$ is located in the interior of an inner facet~$F\in\InnerEdges$, there holds $\TT_{T,i} = \{T,T'\}$ with $F=T\cap T'$ and we obtain together with the inverse inequality from \cref{lem:invserse_trace}
	\begin{equation*}
		\Bigabs{\restr{v_h}{T}-\restr{v_h}{T'}}(x_{T,i})
		\le \norm{\jump{v_h}}_{L^\infty(F)} \le \Ctr\,h_F^{-(d-1)/2}\,\norm{\jump{v_h}}_{L^2(F)}.
	\end{equation*}
	If $x_{T,i}$ coincides with a vertex of $T$ or, in the case $d=3$, is located at an edge of $T$, we choose a sequence of simplices $T=T_1, T_2,\ldots,T_\ell=T'$ such that $T_j$ and $T_{j+1}$, $j=1,\ldots,\ell-1$, share a common facet $F_j\in \FF_{T,i}$.
	With the triangle inequality and similar arguments like in the previous case we deduce
	\begin{equation*}
		\abs{\restr{v_h}{T} - \restr{v_h}{T'}}(x_{T,i})\le \sum_{j=1}^{\ell-1} \abs{\restr{v_h}{T_j} - \restr{v_h}{T_{j+1}}}(x_{T,i})
		\le \Ctr\,\sum_{F\in \FF_{T,i}} h_F^{-(d-1)/2}\norm{\jump{v_h}}_{L^2(F)}.
	\end{equation*}
	We summarize the previous cases and infer
	\begin{align*}
		\frac1{\#\TT_{T,i}}\sum_{T'\in \TT_{T,i}}\Bigabs{\restr{v_h}{T}-\restr{v_h}{T'}}(x_{T,i})
		&\le
		\Ctr\,\sum_{F\in \FF_{T,i}} h_F^{-(d-1)/2}\norm{\jump{v_h}}_{L^2(F)}.
	\end{align*}
	Insertion into \eqref{eq:lifting_error_est_1} yields together with
	the discrete Cauchy-Schwarz inequality
	\begin{align*}
		\norm{v_h - \LiftPP(v_h)}_{L^2(T)}^2
		&\le C_\FF\,s\,\Ctr^2\,\sum_{i=1}^s\sum_{F\in \FF_{T,i}} h_F^{-(d-1)}\,\norm{\jump{v_h}}_{L^2(F)}^2 \int_T \varphi_{T,i}^2\,\dx \\
		&\le c_{d,\kappa}\,s\,\Ctr^2\,\Cv^d\,\sum_{i=1}^s\sum_{F\in \FF_{T,i}}h_F\,\norm{\jump{v_h}}_{L^2(F)}^2,
	\end{align*}
	 with $C_\FF:=\max_{T\in\TT_h}\max_{i=1,\ldots,s} \# \FF_{T,i}\le c_{d,\kappa}$. The last step follows from $\int_T \varphi_{T,i}^2 \, \dx \le \abs{T}\le c_\kappa\,h_T^d\le c_\kappa\,\Cv^d\,h_F^d$.
	Summation over all $T\in \TT_h$ leads to the assertion.
\end{proof} {
}
\begin{lemma}[Discrete Miranda-Talenti estimate]\label{lem:Discrete_Miranda_Talenti_with_Jumps}
	Let $\Omega\subset\R^d$ be a bounded and convex
	domain. The polynomial degree of $\uSpace$ and
	$\HSpace$ is $p\ge 2$. There exist constants $C_1,C_2>0$
	depending on $\Ctr,\Cv,\kappa$ and $d$ and, if $d=3$, also on $\Omega$, such that for each $u_h\in \uSpace$ the inequalities
	\begin{align}
		\label{eq:Discrete_Miranda_Talenti}
		\norm{\FEHessianCG{u_h}}_{L^2(\Omega)}^2 &\le \norm{\FELaplaceCG{u_h}}_{L^2(\Omega)}^2
		+ C_1\,\sum_{F\in\InnerEdges} h_F^{-1}\,\norm{\jump{\nabla u_h\cdot n_F}}_{L^2(F)}^2 \nonumber\\
		&\phantom{\le \norm{\FELaplaceCG{u_h}}_{L^2(\Omega)}^2\ } + C_2\,\sum_{F\in\InnerEdges} h_F\,\norm{\jump{\nabla^2 u_h}}_{L^2(F)}^2, \\
		\label{eq:Discrete_Miranda_TalentiDG}
		\norm{\FEHessianDG{u_h}}_{L^2(\Omega)}^2 &\le \norm{\FELaplaceDG{u_h}}_{L^2(\Omega)}^2
		+ C_1\,\sum_{F\in\InnerEdges} h_F^{-1}\,\norm{\jump{\nabla u_h\cdot n_F}}_{L^2(F)}^2
	\end{align}
	are fulfilled.
\end{lemma}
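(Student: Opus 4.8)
The plan is to establish the discontinuous estimate \eqref{eq:Discrete_Miranda_TalentiDG} first and then to deduce the continuous one \eqref{eq:Discrete_Miranda_Talenti} from it. The starting observation is that both recovered Hessians are generated by the \emph{same} linear functional: testing \eqref{eq:DiscreteHessianCG} and \eqref{eq:DiscreteHessianDG} against $w_h\in\HSpaceCG(\R^{d\times d})\subset\HSpaceDG(\R^{d\times d})$ (for which $\avg{w_h}=w_h$ and $\jump{w_h}=0$) and applying cell-wise integration by parts shows that in both cases the right-hand side equals $\sum_{T\in\TT_h}\int_T\Hessian u_h\dprod w_h\dx-\sum_{F\in\InnerEdges}\int_F\jump{\nabla u_h}\dprod\avg{w_h}\ds$. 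Hence $\FEHessianCG{u_h}$ is the $L^2(\Omega)$-orthogonal projection of $\FEHessianDG{u_h}$ onto $\HSpaceCG(\R^{d\times d})$, and, since the trace commutes with this component-wise projection, $\FELaplaceCG{u_h}$ is the $L^2(\Omega)$-projection of $\FELaplaceDG{u_h}$ onto $\HSpaceCG(\R)$.

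For the discontinuous estimate I would import the convexity of $\Omega$ through the continuous Miranda–Talenti inequality $\norm{\Hessian v}_{L^2(\Omega)}\le\norm{\Delta v}_{L^2(\Omega)}$, valid for all $v\in X=H^2(\Omega)\cap H_0^1(\Omega)$, by introducing an $H^2$-conforming companion $\uconf\in X$ of $u_h$ (an enriching/averaging operator of Hsieh–Clough–Tocher type, as in the argument of \cite{NeilanWu2019}) satisfying the standard bound $\Hnorm{u_h-\uconf}^2\le C\sum_{F\in\InnerEdges}h_F^{-1}\norm{\jump{\nabla u_h\cdot n_F}}_{L^2(F)}^2$; note that the conformity of $\uconf$ gives $\jump{\nabla\uconf\cdot n_F}=0$, so only the jumps of $u_h$ survive on the right. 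Writing $\FEHessianDG{u_h}=\FEHessianDG{\uconf}+\FEHessianDG{u_h-\uconf}$ and using that, by \cref{lem:best_approximation_Hessian}, $\FEHessianDG{\uconf}$ is the $L^2(\Omega)$-projection of $\Hessian\uconf$ (so that $\norm{\FEHessianDG{\uconf}}_{L^2(\Omega)}\le\norm{\Hessian\uconf}_{L^2(\Omega)}\le\norm{\Delta\uconf}_{L^2(\Omega)}$), while the remainder is controlled by the stability bound \eqref{eq:stability_hessian} as $\norm{\FEHessianDG{u_h-\uconf}}_{L^2(\Omega)}\le c_d\Ctr\Hnorm{u_h-\uconf}$, reduces \eqref{eq:Discrete_Miranda_TalentiDG} to comparing $\norm{\Delta\uconf}_{L^2(\Omega)}$ with $\norm{\FELaplaceDG{u_h}}_{L^2(\Omega)}$. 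Their difference is again a projection error: writing $\Delta\uconf=\Delta_h u_h-\Delta_h(u_h-\uconf)$ and noting that the $\TT_h$-piecewise polynomial $\Delta_h u_h\in\HSpaceDG(\R)$ is reproduced by the projection, this error reduces to that of $\Delta_h(u_h-\uconf)$ and is hence bounded by $\Hnorm{u_h-\uconf}$, i.e.\ by the same jump sum.

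Granting \eqref{eq:Discrete_Miranda_TalentiDG}, the continuous estimate follows cleanly. Since a projection is a contraction, $\norm{\FEHessianCG{u_h}}_{L^2(\Omega)}^2\le\norm{\FEHessianDG{u_h}}_{L^2(\Omega)}^2$, and by Pythagoras $\norm{\FELaplaceDG{u_h}}_{L^2(\Omega)}^2=\norm{\FELaplaceCG{u_h}}_{L^2(\Omega)}^2+\norm{\FELaplaceDG{u_h}-\FELaplaceCG{u_h}}_{L^2(\Omega)}^2$. The last term is bounded by $d\,\norm{\FEHessianDG{u_h}-\FEHessianCG{u_h}}_{L^2(\Omega)}^2$ (using $\abs{\trace M}\le\sqrt d\,\abs{M}_F$ pointwise), which, because $\FEHessianCG{u_h}$ is the best $\HSpaceCG$-approximation of $\FEHessianDG{u_h}$, is estimated via the lifting operator of \cref{lem:lifting_error_CG} by $C\sum_{F\in\InnerEdges}h_F\norm{\jump{\FEHessianDG{u_h}}}_{L^2(F)}^2$. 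Splitting $\FEHessianDG{u_h}$ into its broken-Hessian part and the lifted normal-jump correction and invoking \cref{lem:invserse_trace} then produces exactly the two facet sums $\sum_F h_F\norm{\jump{\Hessian u_h}}_{L^2(F)}^2$ and $\sum_F h_F^{-1}\norm{\jump{\nabla u_h\cdot n_F}}_{L^2(F)}^2$ of \eqref{eq:Discrete_Miranda_Talenti}.

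The delicate point — and the main obstacle — is the \emph{sharp} leading constant $1$ in front of $\norm{\FELaplaceDG{u_h}}_{L^2(\Omega)}^2$. A naive triangle-inequality chain of the form $\norm{\FEHessianDG{u_h}}_{L^2(\Omega)}\le\norm{\FELaplaceDG{u_h}}_{L^2(\Omega)}+c\,(\sum_F h_F^{-1}\norm{\jump{\nabla u_h\cdot n_F}}_{L^2(F)}^2)^{1/2}$ is too weak: squaring leaves a cross term of order $\norm{\FELaplaceDG{u_h}}_{L^2(\Omega)}\cdot(\text{jumps})^{1/2}$ which, for finite constants, cannot be absorbed into the full-order jump penalty without spoiling the constant $1$. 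The resolution must therefore reproduce at the discrete level the exact Miranda–Talenti \emph{identity}, in which the gap $\norm{\Hessian\uconf}_{L^2(\Omega)}^2-\norm{\Delta\uconf}_{L^2(\Omega)}^2$ equals a boundary integral that is nonpositive by convexity of $\Omega$ together with $\uconf=0$ on $\Gamma$; this favorable sign is what absorbs the cross terms and keeps the constant. Finally, in three dimensions the enrichment must also average over edge patches, which is the origin of the stated dependence of $C_1,C_2$ on $\Omega$, whereas the two-dimensional case needs only vertex patches and is $\Omega$-independent.
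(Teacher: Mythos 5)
Your argument assembles exactly the same ingredients as the paper's proof: the $H^2$-conforming companion $\uconf=\LiftHCT(u_h)$ with the bound \eqref{eq:H2_lifting_estimate}, the continuous Miranda--Talenti inequality on the convex domain, the projection and stability properties of the recovered Hessian from \cref{lem:best_approximation_Hessian}, the identification of $\FEHessian{u_h}-\nabla_h^2u_h$ as a normal-jump lifting bounded via \cref{lem:invserse_trace}, and \cref{lem:lifting_error_CG} for the $C_2$-term. Only the organization differs: the paper runs the CG and DG cases in parallel through the splittings \eqref{eq:MD_est_splitting_1}--\eqref{eq:MD_est_splitting_2} and \eqref{eq:error_exact_FE_hessian_1}, whereas you prove the DG bound first and transfer it to CG via the (correct) observation that $\FEHessianCG{u_h}=\P_{\HSpaceCG}(\FEHessianDG{u_h})$, so that contraction and Pythagoras preserve the leading constant in the reduction. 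That reduction is a nice touch, but it is a repackaging of the same method, not a different one.

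The substantive issue is the one you raise at the end, and you should understand its status. Your second paragraph, like the paper's proof, establishes only the first-power inequality $\norm{\FEHessianDG{u_h}}_{L^2(\Omega)}\le\norm{\FELaplaceDG{u_h}}_{L^2(\Omega)}+C\bigl(\sum_{F\in\InnerEdges}h_F^{-1}\norm{\jump{\nabla u_h\cdot n_F}}_{L^2(F)}^2\bigr)^{1/2}$, and, as you correctly observe, squaring this cannot yield \eqref{eq:Discrete_Miranda_TalentiDG} with leading constant exactly $1$: the cross term $\norm{\FELaplaceDG{u_h}}\cdot(\text{jumps})^{1/2}$ is not dominated by the jump sum when the jumps are small relative to $\norm{\FELaplaceDG{u_h}}$. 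So your proposal does not prove the lemma as stated --- but neither, by the same token, does the paper: its proof consists entirely of first-power estimates (\eqref{eq:MD_est_splitting_1}, \eqref{eq:MD_est_splitting_2}, \eqref{eq:error_exact_FE_hessian_1}, \eqref{eq:error_exact_FE_hessian_2_CG}/\eqref{eq:error_exact_FE_hessian_2_DG}, \eqref{eq:lifting_error_estimate}, \eqref{eq:CG_lifting_error_estimate}) and the squaring step is never addressed. You have, in effect, reconstructed the paper's argument and located its weak point. Note that the unit constant is not cosmetic: the coercivity proof of \cref{lem:Lax_Milgram} exploits it through Young's inequality with $\xi=(1-\constCordes)^{-1}$ to extract the factor $\constCordes/2$. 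Two repairs are available: (i) settle for a leading constant $1+\delta$ with $\delta\lesssim\constCordes$, or for the first-power form --- both follow from the chain you and the paper use, and both still yield coercivity after re-tuning Young's inequality, at the price of penalty parameters and an ellipticity constant that degenerate faster as $\constCordes\to 0$; or (ii) obtain the unit constant genuinely by transferring the identity-based broken-Hessian estimate of \cite{NeilanWu2019}, where convexity enters with a sign, to the recovered Hessians, controlling the additional lifting cross terms --- this is the route your closing paragraph sketches but does not carry out.
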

\begin{proof}	
	We first introduce a further lifting operator $\LiftHCT\colon \uSpace\to \uSpaceConf$ which maps $u_h$ into an $H^2$-conforming finite element space $\uSpaceConf$. In the case $d=2$, we will make use of the space generated by the Hsieh-Clough-Tocher (HCT) element \cite{Ciarlet1978} or some higher-order analogue.
	The lifting operator $\LiftHCT$ fulfills the estimate
	\begin{equation}\label{eq:H2_lifting_estimate}
		\norm{\nabla_h^k (u_h-\LiftHCT(u_h))}_{L^2(\Omega)}
		\le \CE\,
		\Biggh(){\sum_{F\in\InnerEdges} h_F^{3-2k}\norm{\jump{\nabla u_h\cdot n_F}}_{L^2(F)}^2}^{1/2},\quad k=0,1,2,
	\end{equation}
	with a constant $\CE>0$ depending on $\Cv,\Ctr,\kappa$ and $d$, but for the case $d=3$ also on the structure of $\Omega$. In particular, if an opening angle at a sharp edge $\alpha$ of $\Omega$ tends to $\pi$, then $\CE\to \infty$. 

	A proof of \eqref{eq:H2_lifting_estimate} in the two-dimensional case can be found in \cite[Section~4.11.3]{Verfuerth2013}, \cite[Equation~(2.9)]{BrennerGudiSung2010} for the case $p=2$ and in \cite[Lemma~3.1]{GeorgoulisHoustonVirtanen2011} for $p\ge 2$.
	For the three-dimensional case we refer to \cite{NeilanWu2019}, where a 3D HCT element for polynomial degrees $p\in\{2,3\}$ is studied and to \cite{BrennerSung2019_preprint}, where a different function space based on virtual elements of arbitrary order is used.

	We set $\uconf\coloneqq \LiftHCT(u_h)$ and obtain with the triangle inequality
	\begin{equation}\label{eq:MD_est_splitting_1}
		\norm{\FEHessian{u_h}}_{L^2(\Omega)}
		\le \norm{\nabla^2 \uconf}_{L^2(\Omega)}
		+ \norm{\nabla_h^2 \uconf - \nabla_h^2 u_h}_{L^2(\Omega)}
		+ \norm{\nabla_h^2 u_h - \FEHessian{u_h}}_{L^2(\Omega)}.
	\end{equation}
	For the first term on the right-hand side we can directly apply the continuous Miranda-Talenti estimate from \cite[Theorem~2]{SmearsSueli2013}.
	After insertion of further intermediate functions we obtain
	\begin{align}\label{eq:MD_est_splitting_2}
		\norm{\nabla^2 \uconf}_{L^2(\Omega)}
		&\le \norm{\Delta \uconf}_{L^2(\Omega)}\nonumber\\
		&\le \norm{\FELaplace{u_h}}_{L^2(\Omega)}
		+ \norm{\FELaplace{u_h}-\Delta_h u_h}_{L^2(\Omega)}
		+ \norm{\Delta_h u_h - \Delta_h \uconf}_{L^2(\Omega)}.
	\end{align}
	It remains to bound the two last terms on the right-hand sides of \eqref{eq:MD_est_splitting_1} and \eqref{eq:MD_est_splitting_2}.
	From the error estimate \eqref{eq:H2_lifting_estimate} we infer
	\begin{equation}\label{eq:lifting_error_estimate}
		\norm{\Delta_h u_h - \Delta_h \uconf}_{L^2(\Omega)}
		+ \norm{\nabla_h^2 \uconf - \nabla_h^2 u_h}_{L^2(\Omega)}
		\le
		\CE\,\Biggh(){\sum_{F\in\InnerEdges}h_F^{-1} \norm{\jump{\nabla u_h\cdot n_F}}_{L^2(F)}^2}^{1/2}.
	\end{equation}
	In order to prove a bound for the approximations $\FELaplace{u_h}$ of $\Delta_h u_h$ and
	$\FEHessian{u_h}$ of $\nabla^2_h u_h$ we introduce the $L^2(\Omega)$-projection $\P_{\HSpace}$
	onto $\HSpace(\R^{d\times d})$ and obtain 
	\begin{align}
		\label{eq:error_exact_FE_hessian_1}
		\norm{\FEHessian{u_h} - \nabla_h^2 u_h}_{L^2(\Omega)}
		&\le \norm{\FEHessian{u_h} - \P_{\HSpace}(\nabla_h^2 u_h)}_{L^2(\Omega)}
		+ \norm{\P_{\HSpace}(\nabla_h^2 u_h) - \nabla_h^2 u_h}_{L^2(\Omega)}.
	\end{align}

	To bound the first term on the right-hand side of \eqref{eq:error_exact_FE_hessian_1}
	in case of $\FEHessian{} = \FEHessianCG{}$ we test \eqref{eq:DiscreteHessianCG} with the function $w_h\coloneqq\FEHessianCG{u_h} - \P_{\HSpaceCG}(\nabla_h^2 u_h)\in \HSpaceCG(\R^{d\times d})$, apply the orthogonality of $\P_{\HSpaceCG}$, the definition of $\FEHessianCG{}$, the integration-by-parts formula and \cref{lem:invserse_trace} to arrive at
	\begin{align}
		\label{eq:error_exact_FE_hessian_2_CG}
		&\norm{\FEHessianCG{u_h} - \P_{\HSpaceCG}(\nabla_h^2 u_h)}_{L^2(\Omega)}^2
		= \int_\Omega \left(\FEHessianCG{u_h} - \nabla_h^2 u_h\right)\dprod w_h\,\dx\nonumber\\
		&\quad= 
		\int_\Gamma \nabla u_h\cdot (w_h\,n)\,\ds
		- \sum_{T\in\TT_h} 
		\int_{\partial T} \nabla u_h\cdot (w_h\,n_{\partial T})\,\ds \nonumber\\
		&\quad= -\sum_{F\in \InnerEdges} \int_F \jump{\nabla u_h}\dprod w_h\,\ds
		\le \Ctr\, \varh(){\sum_{F\in \InnerEdges} h_F^{-1}\norm{\jump{\nabla u_h\cdot n_F}}_{L^2(F)}^2}^{1/2} \norm{w_h}_{L^2(\Omega)}.
	\end{align}
	Note that we used the relation $w_h\colon (n_F\otimes \nabla u_h) = \nabla u_h\cdot (w_h\,n_F)$ as well as the fact that the jumps in tangential direction $\jump{\nabla u_h\cdot t_F}$ vanish as $u_h$ is continuous along the facets $F$.

	In the case $\FEHessian{} = \FEHessianDG{}$ we use similar arguments, in particular the integration-by-parts formula and \eqref{eq:DiscreteHessianDG}, to obtain
	\begin{align}
		\label{eq:error_exact_FE_hessian_2_DG}
		&\norm{\FEHessianDG{u_h} - \P_{\HSpaceDG}(\nabla_h^2 u_h)}_{L^2(\Omega)}^2
		= \int_\Omega \left(\FEHessianDG{u_h} - \nabla_h^2 u_h\right)\dprod w_h\,\dx\nonumber\\
		&\quad= 
		\sum_{F\in\Edges} \int_F \avg{\nabla u_h}\cdot \jump{w_h}\,\ds
		-\sum_{T\in\TT_h} \int_{\partial T} \nabla u_h\cdot (w_h\,n_{\partial T})\,\ds \nonumber\\
		&\quad= -\sum_{F\in \InnerEdges} \int_F\jump{\nabla u_h} \dprod\avg{w_h}\,\ds
		\le \Ctr \, \varh(){\sum_{F\in \InnerEdges} h_F^{-1}\norm{\jump{\nabla u_h\cdot n_F}}_{L^2(F)}^2}^{1/2} \norm{w_h}_{L^2(\Omega)}.
	\end{align}
	Next, we discuss the second term on the right-hand side of \eqref{eq:error_exact_FE_hessian_1}.
	In case of $\FEHessian{} = \FEHessianCG{}$ we obtain an estimate from \cref{lem:lifting_error_CG} and the property $\nabla_h^2 u_h\in \MyDGSpace(\R^{d\times d})\subset \HSpaceDG(\R^{d\times d})$, \ie,
	\begin{align}
		\label{eq:CG_lifting_error_estimate}
		\norm{\P_{\HSpaceCG}(\nabla_h^2 u_h) - \nabla_h^2 u_h}_{L^2(\Omega)}
		&\le \norm{\LiftPP(\nabla_h^2 u_h) - \nabla_h^2 u_h}_{L^2(\Omega)}\nonumber\\
		&\le C\,\Biggh(){\sum_{F\in \InnerEdges} h_F\,\sum_{i,j=1}^d \norm{\jump{\partial_{ij} u_h}}_{L^2(F)}^2}^{1/2}\nonumber\\
		&\le C\,\Biggh(){\sum_{F\in \InnerEdges} h_F\, \norm{\jump{\nabla^2 u_h}}_{L^2(F)}^2}^{1/2}.
	\end{align}
	Note that the jump operator for matrix-valued functions involves only jumps in normal direction. In order to confirm the last step in the previous estimate, one just has to take into account that $u_h$ is continuous along the element facets so that the tangential components of the jumps vanish.
	
	Finally, one observes that the second term on the right-hand side of
	\eqref{eq:error_exact_FE_hessian_1} vanishes in case of $\FEHessian{} = \FEHessianDG{}$, \ie,
	\begin{equation}\label{eq:error_est_l2_proj_DG}
		\norm{\P_{\HSpaceDG}(\nabla_h^2 u_h) - \nabla_h^2 u_h}_{L^2(\Omega)} = 0,
	\end{equation}
	which is due to the fact that $\nabla_h^2 u_h\in \HSpaceDG(\R^{d\times d})$ holds.

	The discrete Miranda-Talenti estimates follow after inserting 
	\eqref{eq:error_exact_FE_hessian_2_CG} and \eqref{eq:CG_lifting_error_estimate}
	in case of $\FEHessian{} = \FEHessianCG{}$, or
	\eqref{eq:error_exact_FE_hessian_2_DG} and \eqref{eq:error_est_l2_proj_DG}
	in case of $\FEHessian{} = \FEHessianDG{}$, into \eqref{eq:error_exact_FE_hessian_1}, and combining the resulting estimates with
	\eqref{eq:MD_est_splitting_1} and \eqref{eq:MD_est_splitting_2}.
\end{proof}
The following result is needed in order to treat the fact that we work with
different norms for the spaces $\uSpace$ which have to be bounded by each other.
\begin{lemma}\label{eq:Bound_Laplace_FELaplace}
The following estimates are valid for arbitrary $u_h\in \uSpace$:
\begin{align*}
    \norm{\FELaplaceCG{u_h}}_{L^2(\Omega)}^2 &\ge \frac13\,\norm{\Delta_h u_h}_{L^2(\Omega)}^2 - \frac{d+1}3\,\Ctr^2\,\sum_{F\in\InnerEdges}h_F^{-1}\norm{\jump{\nabla u_h\cdot n_F}}_{L^2(F)}^2\\
	&\qquad - \frac{d+1}6\,\CE^2\,\sum_{F\in\InnerEdges}h_F\,\norm{\jump{\Delta u_h}}_{L^2(F)}^2,\\[.5em]
\norm{\FELaplaceDG{u_h}}_{L^2(\Omega)}^2 &\ge \frac13\,\norm{\Delta_h u_h}_{L^2(\Omega)}^2 - \frac{d+1}{3}\,\Ctr^2\,\sum_{F\in\InnerEdges}h_F^{-1}\,\norm{\jump{\nabla u_h\cdot n_F}}_{L^2(F)}^2.
\end{align*}
\end{lemma}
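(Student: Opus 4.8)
The plan is to bound the recovered Laplacian from below by the broken exact Laplacian $\Delta_h u_h$ up to the consistency error between the two, and then to control that consistency error by the two jump quantities appearing on the right-hand side. A preliminary step I would record is the scalar counterpart of the Hessian recovery: testing the defining relation \eqref{eq:DiscreteHessianCG} with $w_h = \phi_h I$ for a scalar $\phi_h \in \HSpaceCG(\R)$ and taking traces shows that $\FELaplaceCG{u_h} = \trace\FEHessianCG{u_h}$ satisfies the weak identity $\int_\Omega \FELaplaceCG{u_h}\,\phi_h\,\dx = -\int_\Omega \nabla u_h\cdot\nabla\phi_h\,\dx + \int_\Gamma (\nabla u_h\cdot n)\,\phi_h\,\ds$, so that it is a genuine finite-element Laplacian recovery. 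The same substitution in \eqref{eq:DiscreteHessianDG} yields the analogous DG identity with facet terms $\sum_{F}\int_F \avg{\nabla u_h}\cdot\jump{\phi_h I}\,\ds$.

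Next I would split the consistency error orthogonally through the $L^2(\Omega)$-projection $\P_{\HSpaceCG}$ onto the scalar space $\HSpaceCG(\R)$, writing $\FELaplaceCG{u_h} - \Delta_h u_h = (\FELaplaceCG{u_h} - \P_{\HSpaceCG}(\Delta_h u_h)) + (\P_{\HSpaceCG}(\Delta_h u_h) - \Delta_h u_h)$; the two summands are $L^2$-orthogonal since the first lies in $\HSpaceCG(\R)$ and the second is the projection remainder. For the first summand I would test the weak identity above with $\phi_h \coloneqq \FELaplaceCG{u_h} - \P_{\HSpaceCG}(\Delta_h u_h)$, integrate $\int_\Omega \Delta_h u_h\,\phi_h\,\dx$ by parts element-wise, and observe that all interior boundary contributions collapse to $\sum_{F\in\InnerEdges}\int_F \jump{\nabla u_h\cdot n_F}\,\phi_h\,\ds$ (the tangential jumps vanish because $u_h$ is continuous, and the boundary facets cancel against $\int_\Gamma$). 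A discrete Cauchy--Schwarz inequality together with the trace inequality of \cref{lem:invserse_trace} then produces the factor $\Ctr$ and the $\jump{\nabla u_h\cdot n_F}$ contribution, exactly as in \eqref{eq:error_exact_FE_hessian_2_CG}; the factor $(d+1)$ enters precisely here, when the facet trace-norms of $\phi_h$ are summed back into $\norm{\phi_h}_{L^2(\Omega)}$, since every simplex carries $d+1$ facets. For the second summand I would invoke \cref{lem:lifting_error_CG} with $v_h = \Delta_h u_h \in \HSpaceDG(\R)$, using that $\P_{\HSpaceCG}$ is the best $L^2$-approximation and is therefore dominated by the averaging lifting $\LiftPP(\Delta_h u_h)$; this is the scalar analogue of \eqref{eq:CG_lifting_error_estimate} and yields the $\sum_{F} h_F\,\norm{\jump{\Delta u_h}}_{L^2(F)}^2$ term, with the trace jump $\jump{\Delta u_h}$ taking the place of the full Hessian jump $\jump{\nabla^2 u_h}$.

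To finish I would combine these two bounds with a reverse triangle inequality. Exploiting the orthogonality, $\norm{\Delta_h u_h}_{L^2(\Omega)}^2 = \norm{\P_{\HSpaceCG}(\Delta_h u_h)}_{L^2(\Omega)}^2 + \norm{\P_{\HSpaceCG}(\Delta_h u_h)-\Delta_h u_h}_{L^2(\Omega)}^2$, while $\norm{\P_{\HSpaceCG}(\Delta_h u_h)}_{L^2(\Omega)} \le \norm{\FELaplaceCG{u_h}}_{L^2(\Omega)} + \norm{\FELaplaceCG{u_h} - \P_{\HSpaceCG}(\Delta_h u_h)}_{L^2(\Omega)}$; a weighted Young inequality, with parameter tuned so that the coefficient of $\norm{\FELaplaceCG{u_h}}_{L^2(\Omega)}^2$ becomes $\tfrac13$, then rearranges into the claimed estimate with the two constants $\tfrac{d+1}{3}\Ctr^2$ and $\tfrac{d+1}{6}\CE^2$. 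The DG case is identical but shorter: the projection remainder vanishes because $\Delta_h u_h\in\HSpaceDG(\R)$ already, so only the first summand survives and only the $\jump{\nabla u_h\cdot n_F}$ term remains.

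I expect the main obstacle to be the bookkeeping of the sharp constants rather than any conceptual difficulty. Concretely, the delicate points are tracking the exact $(d+1)$ through the facet-to-element summation — distributing each interior facet symmetrically to its two neighbouring simplices is what keeps this factor clean — and then pinning down the Young parameter so that the three rational coefficients $\tfrac13$, $\tfrac{d+1}{3}$ and $\tfrac{d+1}{6}$ emerge as stated and not merely up to generic constants.
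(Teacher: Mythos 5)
Your proposal is correct and takes essentially the same route as the paper's proof: both arguments rest on the scalar weak identity for the recovered Laplacian obtained from \eqref{eq:DiscreteHessianCG}/\eqref{eq:DiscreteHessianDG} and integration by parts, on the $L^2(\Omega)$-projection $\P_{\HSpaceCG}(\Delta_h u_h)$ together with the Pythagoras identity, on the scalar analogue of \eqref{eq:CG_lifting_error_estimate} to convert the projection remainder into the $\jump{\Delta u_h}$ term, and on Cauchy--Schwarz with the discrete trace inequality of \cref{lem:invserse_trace} plus Young's inequality for the facet term, with the DG case collapsing exactly as you say since $\Delta_h u_h\in\HSpaceDG$. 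The only deviation is the final bookkeeping --- the paper keeps the exact cross term $\sum_{F\in\InnerEdges}\int_F\jump{\nabla u_h\cdot n_F}\,\avg{\FELaplace{u_h}+\P_{\HSpaceCG}(\Delta_h u_h)}\,\ds$ and applies Young to it, whereas you first split off $\FELaplaceCG{u_h}-\P_{\HSpaceCG}(\Delta_h u_h)$ and recombine via triangle plus Young --- and with your symmetric facet-to-element distribution your route gives $\tfrac{d+1}{4}\,\Ctr^2$ in place of $\tfrac{d+1}{3}\,\Ctr^2$ (at least as sharp, so the stated bound follows), while the $\jump{\Delta u_h}$ coefficient emerges as $\tfrac13\,C^2$ with $C$ from \cref{lem:lifting_error_CG}, which is exactly what the paper's own proof produces before it silently renames this constant $\tfrac{d+1}{6}\,\CE^2$.
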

\begin{proof}
We start with the case $\FEHessian{} = \FEHessianDG{}$.
We apply the definition \eqref{eq:DiscreteHessianDG} and the integration-by-parts formula and obtain for arbitrary $w_h\in \HSpaceDG$
\begin{align*}
	(\FELaplaceDG{u_h}, w_h)_{L^2(\Omega)}
	&= -(\nabla_h u_h,\nabla_h w_h)_{L^2(\Omega)}
	+ \sum_{F\in \Edges} \int_F \avg{\nabla u_h\cdot n_F}\, \jump{w_h}\,\ds \\
	&= (\Delta_h u_h,w_h)_{L^2(\Omega)}
	- \sum_{F\in\InnerEdges} \int_F \jump{\nabla u_h\cdot n_F}\,\avg{w_h}\,\ds.
\end{align*}
Testing this equation with $w_h = \FELaplaceDG{u_h}\in \HSpaceDG$ yields together with a further application of the integration-by-parts formula
\begin{align*}
	&\norm{\FELaplaceDG{u_h}}_{L^2(\Omega)}^2
	= (\Delta_h u_h, \FELaplaceDG{u_h})_{L^2(\Omega)}
	- \sum_{F\in\InnerEdges} \int_F \jump{\nabla u_h\cdot n_F}\,\avg{\FELaplaceDG{u_h}}\,\ds \\
	&\quad= -(\nabla_h\Delta_h u_h, \nabla_h u_h)_{L^2(\Omega)} - \sum_{F\in\InnerEdges} \int_F \varh(){\jump{\nabla u_h\cdot n_F}\,\avg{\FELaplaceDG{u_h}} - \avg{\nabla u_h\cdot n_F}\,\jump{\Delta u_h}}\,\ds \\
	&\qquad + \sum_{F\in\Edges\setminus\InnerEdges}\int_F\nabla u_h\cdot n_F\,\Delta u_h\,\ds\\
	&\quad= (\Delta_h u_h,\Delta_h u_h)_{L^2(\Omega)} - \sum_{F\in\InnerEdges} \int_F \varh(){\jump{\nabla u_h\cdot n_F}\,\avg{\FELaplaceDG{u_h}} - \avg{\nabla u_h\cdot n_F}\,\jump{\Delta u_h}}\,\ds \\
	&\qquad + \sum_{F\in\Edges\setminus\InnerEdges}\int_F\nabla u_h\cdot n_F\,\Delta u_h\,\ds - \sum_{F\in \Edges} \int_F \jump{\nabla_h u_h\cdot n_F\,\Delta u_h}\,\ds.
\end{align*}
Together with the identity
\begin{equation*}
	\jump{\nabla_h u_h\cdot n_F\,\Delta u_h} = \avg{\nabla u_h\cdot n_F}\,\jump{\Delta u_h} + \jump{\nabla u_h\cdot n_F}\,\avg{\Delta u_h}
\end{equation*}
for all $F\in \InnerEdges$ we arrive at
\begin{equation*}
	\norm{\FELaplaceDG{u_h}}_{L^2(\Omega)}^2 = \norm{\Delta_h u_h}_{L^2(\Omega)}^2
	- \sum_{F\in\InnerEdges} \int_F \jump{\nabla u_h\cdot n_F}\,\avg{\FELaplaceDG{u_h} + \Delta_h u_h}\,\ds.
\end{equation*}
With the Cauchy-Schwarz and the Young inequality using also the discrete trace inequality from \cref{lem:invserse_trace}, we then deduce
\begin{align*}
	\norm{\FELaplaceDG{u_h}}_{L^2(\Omega)}^2
	&\ge \norm{\Delta_h u_h}_{L^2(\Omega)}^2
	- \xi \sum_{F\in\InnerEdges} h_F^{-1}\,\norm{\jump{\nabla u_h\cdot n_F}}_{L^2(F)}^2 \\
	&\qquad- \frac{\Ctr^2\,(d+1)}{4\,\xi} \varh(){\norm{\FELaplaceDG{u_h}}_{L^2(\Omega)}^2
	+ \norm{\Delta_h u_h}_{L^2(\Omega)}^2}
\end{align*}
for arbitrary $\xi>0$. We use the choice $\xi=\Ctr^2\,(d+1)/2$ and after rearrangement of the above inequality we arrive at
\begin{equation}
\label{eq:Proof_Bound_HLaplace_CG}
	\norm{\FELaplaceDG{u_h}}_{L^2(\Omega)}^2 \ge \frac13 \norm{\Delta_h u_h}^2 - \frac13(d+1)\Ctr^2 \sum_{F\in\InnerEdges}h_F^{-1}\norm{\jump{\nabla u_h\cdot n_F}}_{L^2(F)}^2.	
\end{equation}

In a similar way we derive the estimate for $\FELaplace{}=\FELaplaceCG{}$. First, we use the definition \eqref{eq:def_discrete_Hessian} and the integration-by-parts formula and obtain for each $w_h\in \HSpaceCG$
\begin{align*}
	(\FELaplaceCG{u_h}, w_h)_{L^2(\Omega)}
	&= -(\nabla u_h,\nabla w_h)_{L^2(\Omega)} + \int_\Gamma \nabla u_h\cdot n_\Gamma\,w_h\,\ds \\
	&= (\Delta_h u_h, w_h)_{L^2(\Omega)} - \sum_{F\in\InnerEdges} \int_F \jump{\nabla u_h\cdot n_F}\,w_h\,\ds,
\end{align*}
where we exploited that $w_h$ is continuous across the element facets. We choose the test function $w_h = \FELaplaceCG{u_h}$, use the orthogonality of the $L^2(\Omega)$-projection $\P_{\HSpaceCG}$ onto $\HSpaceCG$ and get with a further application of the definition \eqref{eq:def_discrete_Hessian} and the integration-by-parts formula
\begin{align*}
	\norm{\FELaplaceCG{u_h}}_{L^2(\Omega)}^2 &=
	(\P_{\HSpaceCG} (\Delta_h u_h), \FELaplaceCG{u_h})_{L^2(\Omega)}
	- \sum_{F\in\InnerEdges} \int_F \jump{\nabla u_h\cdot n_F}\,\FELaplaceCG{u_h}\,\ds \\
	&= - (\nabla \P_{\HSpaceCG}(\Delta_h u_h), \nabla u_h)_{L^2(\Omega)}
	- \sum_{F\in\InnerEdges} \int_F \jump{\nabla u_h\cdot n_F}\,\FELaplaceCG{u_h}\,\ds \\
	&\quad + \int_\Gamma \nabla u_h\cdot n_\Gamma\,\P_{\HSpaceCG} (\Delta_h u_h)\,\ds \\
	&= (\P_{\HSpaceCG}(\Delta_h u_h), \Delta_h u_h)_{L^2(\Omega)} - \sum_{F\in\InnerEdges} \jump{\nabla u_h\cdot n_F}\,(\FELaplaceCG{u_h} + \P_{\HSpaceCG}(\Delta_h u_h))\,\ds.
\end{align*}
From this we infer with the properties of $\P_{\HSpaceCG}$ and similar arguments as in \eqref{eq:Proof_Bound_HLaplace_CG}
\begin{equation*}
	\norm{\FELaplaceCG{u_h}}_{L^2(\Omega)}^2
	\ge \frac13 \norm{\P_{\HSpaceCG}(\Delta_h u_h)}_{L^2(\Omega)}^2 - \frac{d+1}3 \Ctr^2\sum_{F\in\InnerEdges} h_F^{-1}\norm{\jump{\nabla u_h\cdot n_F}}_{L^2(F)}^2.
\end{equation*}
Furthermore, with the property $\norm{\P_{\HSpaceCG} v}_{L^2(\Omega)}^2 = \norm{v}_{L^2(\Omega)}^2 + \norm{v-\P_{\HSpaceCG} v}_{L^2(\Omega)}^2$ and the estimate \eqref{eq:CG_lifting_error_estimate} we get
\begin{align*}
	\norm{\FELaplaceCG{u_h}}_{L^2(\Omega)}^2
	\ge \frac13 \norm{\Delta_h u_h}_{L^2(\Omega)}^2 &- \frac{d+1}3 \Ctr^2\sum_{F\in\InnerEdges} h_F^{-1}\norm{\jump{\nabla u_h\cdot n_F}}_{L^2(F)}^2 \\
	&- \frac{d+1}6 \CE^2\,\sum_{F\in \InnerEdges} h_F\, \norm{\jump{\Delta u_h}}_{L^2(F)}^2.
\end{align*}
\end{proof}

Next, we want to mimic the proof of \cref{lem:existence} for the continuous setting in order
to show well-posedness of our discrete scheme. 
However, due to the jump terms on the right-hand sides of the estimates
\eqref{eq:Discrete_Miranda_Talenti} and \eqref{eq:Discrete_Miranda_TalentiDG} the proof of the coercivity 
of the bilinear form $a_h$ will fail if $\varepsilon$ is too small,
see \eqref{eq:CordesCondition}. To this end, stabilization terms in the discrete scheme are needed and we define
\begin{align}
	\label{eq:Discrete_bilinear_form_stab}
	J_h(u_h,v_h)
	&\coloneqq \eta_1\,\sum_{F\in\InnerEdges} h_F^{-1}\int_F \jump{\nabla u_h\cdot n_F}\,\jump{\nabla v_h\cdot n_F}\,\ds
	+ \eta_2\,\sum_{F\in\InnerEdges} h_F \int_F \jump{\nabla^2 u_h}\cdot \jump{\nabla^2 u_h}\,\ds
\end{align}
to be inserted into \eqref{eq:DiscreteProblem}. The penalty parameters $\eta_1,\eta_2\ge 0$ have to be chosen appropriately to guarantee the coercivity of $a_h+J_h$.
For the stabilized scheme one can show the following well-posedness result.
\begin{lemma}
	\label{lem:Lax_Milgram}
	Let $\Omega\subset\R^d$ be a bounded and convex domain.
	Let $C_1$ and $C_2$ be the constants from \cref{lem:Discrete_Miranda_Talenti_with_Jumps},
	where we set $C_2=0$ in case of $\FEHessian{} =
	\FEHessianDG{}$.
	Assume that $A$ fulfills the \Cordes\ condition \eqref{eq:CordesCondition} 
	with a constant $\varepsilon\in (0,1]$ and that the polynomial degree of $\uSpace$ and $\HSpace$ is $p\ge 2$.
	Then the bilinear form $a_h+J_h$ is bounded and uniformly elliptic, \ie,
	there exist constants $\alpha_0,\beta_0$ such that the inequalities
	\begin{align}
		a_h(u_h,v_h) + J_h(u_h,v_h) &\le \beta_0\,\Hnorm{u_h}\, \Hnorm{v_h} 
		&& \forall u_h, v_h\in \uSpaceZero,\label{eq:ah_bounded} \\
		a_h(u_h,u_h) + J_h(u_h,u_h) &\ge \constCordes\,\alpha_0\,\Hnorm{u_h}^2
		&&\forall u_h\in \uSpaceZero,\label{eq:discrete_ellipticity}
	\end{align}
	are fulfilled, provided that
	the penalty parameters in $J_h$ fulfill the inequalities
	
	\begin{equation}
		\label{eq:assumptions_penalty_parameters}
		\eta_1 \ge \frac{(1-\varepsilon)\,C_1}2+ \frac{\constCordes}6\,\varh(){1 + (d+1)\,\Ctr^2}
		\qquad
		\eta_2 \ge \frac{(1-\varepsilon)\,C_2}2 + \frac{d+1}{12}\,\constCordes\,\CE^2.
	\end{equation}
	The constants $\alpha_0$ and $\beta_0$ depend on $\kappa, d, p, \Ctr, \Cv$ and in the case $d=3$ on the geometry of $\Omega$, but not on $\constCordes$ and $h$.
	
	As a consequence, problem \eqref{eq:DiscreteProblem} possesses a unique solution $u_h\in \uSpaceZero$ for each $f\in L^2(\Omega)$.
\end{lemma}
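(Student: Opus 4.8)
The plan is to transfer the argument behind \cref{lem:existence} to the discrete level. Writing $\gamma A = I + (\gamma A - I)$ and using $I\dprod\FEHessian{u_h} = \trace\FEHessian{u_h} = \FELaplace{u_h}$, the leading part of $a_h$ reproduces $\norm{\FELaplace{u_h}}_{L^2(\Omega)}^2$, while the remainder is controlled by the \Cordes\ bound $\norm{\gamma A - I}_F \le \sqrt{1-\varepsilon}$ of \cref{lem:consequence_cordes} and the discrete Miranda--Talenti estimate of \cref{lem:Discrete_Miranda_Talenti_with_Jumps}. Boundedness~\eqref{eq:ah_bounded} is the easy half: since $\norm{\gamma A}_F \le \sqrt{d}+\sqrt{1-\varepsilon}$ a.e., the Cauchy--Schwarz inequality and $\norm{\FELaplace{v_h}}_{L^2(\Omega)} \le \sqrt{d}\,\norm{\FEHessian{v_h}}_{L^2(\Omega)}$ reduce $a_h(u_h,v_h)$ to a multiple of $\norm{\FEHessian{u_h}}_{L^2(\Omega)}\norm{\FEHessian{v_h}}_{L^2(\Omega)}$, which the stability estimate~\eqref{eq:stability_hessian} bounds by $\Hnorm{u_h}\Hnorm{v_h}$. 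For $J_h$ one applies Cauchy--Schwarz facet-wise; the first penalty term is bounded by $\Hnorm{u_h}\Hnorm{v_h}$ since its facet sum is part of $\Hnorm{\cdot}^2$, and for the second the discrete trace inequality of \cref{lem:invserse_trace} gives $\sum_{F\in\InnerEdges} h_F\,\norm{\jump{\Hessian v_h}}_{L^2(F)}^2 \le c\sum_{T\in\TT_h}\norm{\Hessian v_h}_{L^2(T)}^2 \le c\,\Hnorm{v_h}^2$.

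For the coercivity~\eqref{eq:discrete_ellipticity} I would abbreviate $L \coloneqq \norm{\FELaplace{u_h}}_{L^2(\Omega)}$, $H \coloneqq \norm{\FEHessian{u_h}}_{L^2(\Omega)}$, and write $J_1, J_2$ for the two facet sums in~\eqref{eq:Discrete_bilinear_form_stab}. The splitting above gives
\begin{equation*}
a_h(u_h,u_h) = L^2 + \int_\Omega \bigl[(\gamma A - I)\dprod\FEHessian{u_h}\bigr]\,\FELaplace{u_h}\,\dx \ge L^2 - \sqrt{1-\varepsilon}\,H\,L,
\end{equation*}
by \cref{lem:consequence_cordes} and Cauchy--Schwarz. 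Young's inequality with weight one, $\sqrt{1-\varepsilon}\,HL \le \tfrac{1}{2} L^2 + \tfrac{1-\varepsilon}{2}H^2$, followed by the discrete Miranda--Talenti bound $H^2 \le L^2 + C_1 J_1 + C_2 J_2$, collapses the Laplacian terms to $\tfrac{\varepsilon}{2}L^2$ and leaves $-\tfrac{(1-\varepsilon)C_1}{2}J_1 - \tfrac{(1-\varepsilon)C_2}{2}J_2$. Adding $J_h(u_h,u_h) = \eta_1 J_1 + \eta_2 J_2$ and inserting the surplus guaranteed by~\eqref{eq:assumptions_penalty_parameters} yields
\begin{equation*}
a_h(u_h,u_h) + J_h(u_h,u_h) \ge \varepsilon\Bigl[\tfrac{1}{2} L^2 + \tfrac{1}{6}\bigl(1+(d+1)\Ctr^2\bigr)J_1 + \tfrac{d+1}{12}\,\CE^2\,J_2\Bigr].
\end{equation*}

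The crux is to replace this lower bound, written in the \emph{discrete} Laplacian $L$, by the target $\Hnorm{u_h}^2 = \norm{\nabla_h^2 u_h}_{L^2(\Omega)}^2 + J_1$ built from the \emph{exact} broken Hessian. I would chain two bounds. First, \cref{eq:Bound_Laplace_FELaplace} bounds $\norm{\Delta_h u_h}_{L^2(\Omega)}^2$ by $3L^2$ plus multiples of $J_1$ and $\sum_{F\in\InnerEdges} h_F\,\norm{\jump{\Delta u_h}}_{L^2(F)}^2$; the last sum is dominated by $J_2$, because the $C^0$-continuity of $u_h$ forces the tangential--tangential block of $\nabla_h^2 u_h^+ - \nabla_h^2 u_h^-$ to vanish on each facet, so that pointwise $\abs{\jump{\Delta u_h}} = \abs{\trace(\nabla_h^2 u_h^+ - \nabla_h^2 u_h^-)} \le \abs{(\nabla_h^2 u_h^+ - \nabla_h^2 u_h^-)\,n_F} = \abs{\jump{\Hessian u_h}}$. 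Second, the $H^2$-conforming lift $\uconf = \LiftHCT(u_h)$, the continuous Miranda--Talenti estimate on the convex $\Omega$, and the lifting bound~\eqref{eq:H2_lifting_estimate} give $\norm{\nabla_h^2 u_h}_{L^2(\Omega)} \le \norm{\Hessian\uconf}_{L^2(\Omega)} + \CE\,J_1^{1/2} \le \norm{\Delta_h u_h}_{L^2(\Omega)} + c\,\CE\,J_1^{1/2}$. Together these yield $\Hnorm{u_h}^2 \le c\,(L^2 + J_1 + J_2)$ with $c$ depending only on $d,\Ctr,\CE$. Since the previous display carries strictly positive coefficients on $L^2$, $J_1$, and $J_2$ — precisely the surplus produced by~\eqref{eq:assumptions_penalty_parameters} — there exists $\alpha_0>0$, independent of $\varepsilon$ and $h$, with $a_h(u_h,u_h)+J_h(u_h,u_h) \ge \varepsilon\,\alpha_0\,\Hnorm{u_h}^2$.

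In the discontinuous case one sets $C_2 = 0$; then both~\eqref{eq:Discrete_Miranda_TalentiDG} and the second estimate of \cref{eq:Bound_Laplace_FELaplace} involve only $J_1$, the $J_2$ contributions disappear throughout, and the same chain applies. Finally, boundedness and $\varepsilon\alpha_0$-coercivity of $a_h + J_h$ on the finite-dimensional space $\uSpaceZero$ equipped with $\Hnorm{\cdot}$ yield, by the Lax--Milgram lemma, a unique solution $u_h$ of~\eqref{eq:DiscreteProblem} for every $f\in L^2(\Omega)$. I expect the norm conversion of the third paragraph to be the only genuinely delicate step: it is where convexity of $\Omega$ enters through the continuous Miranda--Talenti estimate, where the $C^0$-continuity of $u_h$ is exploited, and where the jump-term constants must be tracked carefully enough to be absorbed by the surplus in the penalty parameters~\eqref{eq:assumptions_penalty_parameters}.
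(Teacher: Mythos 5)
Your proposal is correct, and its core follows the paper's own strategy: the splitting $\gamma A = I + (\gamma A - I)$, the Cordes bound from \cref{lem:consequence_cordes}, Cauchy--Schwarz and Young, the discrete Miranda--Talenti estimate of \cref{lem:Discrete_Miranda_Talenti_with_Jumps}, absorption of the jump terms by the surplus in \eqref{eq:assumptions_penalty_parameters}, the conversion from $\norm{\FELaplace{u_h}}_{L^2(\Omega)}$ to $\norm{\Delta_h u_h}_{L^2(\Omega)}$ via \cref{eq:Bound_Laplace_FELaplace}, and Lax--Milgram at the end. Your Young-inequality bookkeeping (weight one, then Miranda--Talenti applied to $H^2$) differs only cosmetically from the paper's (Miranda--Talenti inserted under the square root, then Young with weight $\xi=(1-\constCordes)^{-1}$); both land on exactly $\tfrac{\constCordes}{2}L^2 - \tfrac{1-\constCordes}{2}(C_1 J_1 + C_2 J_2)$. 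Two steps genuinely deviate, both to your credit. First, for the final norm equivalence $\norm{\Delta_h u_h}_{L^2(\Omega)}^2 + J_1 \gtrsim \Hnorm{u_h}^2$ the paper simply cites the broken Miranda--Talenti estimate of Neilan and Wu, whereas you derive it self-containedly from the HCT lifting bound \eqref{eq:H2_lifting_estimate} and the continuous Miranda--Talenti estimate on the convex domain --- the same tools the paper already deploys in the proof of \cref{lem:Discrete_Miranda_Talenti_with_Jumps} --- so your proof avoids the external reference at the cost of a slightly worse constant. Second, you explicitly justify the pointwise bound $\abs{\jump{\Delta u_h}} \le \abs{\jump{\nabla^2 u_h}}$ on interior facets via the vanishing tangential--tangential block of the Hessian jump; the paper uses this silently when it inserts \cref{eq:Bound_Laplace_FELaplace} (stated with $\jump{\Delta u_h}$ jumps) into \eqref{eq:ellipticity_proof} (written with $\jump{\nabla^2 u_h}$ jumps), so your argument closes a small gap the paper leaves implicit.
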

\begin{proof}
	First, we show the boundedness of $a_h+J_h$. With the Cauchy-Schwarz inequality and
	\eqref{eq:stability_hessian} we obtain
	\begin{equation}
	\label{eq:bilinear_form_bounded_by_norm}
		a_h(u_h,v_h)
		\le c_d\,\Ctr^2\,\norm{\gamma A}_{L^\infty(\Omega)} \Hnorm{u_h}\,\Hnorm{v_h}
		.
	\end{equation}
	With \cref{lem:consequence_cordes} and the assumed \Cordes\ condition \eqref{eq:CordesCondition} we moreover conclude $\norm{\gamma\,A}_{L^\infty(\Omega)} \le 1+\sqrt{1-\varepsilon} \le 2$.
	To derive a similar estimate for the stabilization term $J_h$ we apply the Cauchy-Schwarz inequality on each inner facet $F\in \InnerEdges$, and for the second term in $J_h$ we additionally employ the discrete trace theorem from~\cref{lem:invserse_trace} $\norm{\nabla^2 u_h}_{L^2(F)} \le \Ctr\,h_{T_F}^{-1/2}\,\norm{\nabla^2 u_h}_{L^2(T_F)}$,
	where $T_F\in \TT_h$ is an arbitrary element with $F\subset T_F$.
	This yields
	\begin{equation}\label{eq:jump_bounded_by_norm_no_penalty_parameters}
	\begin{split}
	\sum_{F\in\InnerEdges} h_F^{-1}\int_F \jump{\nabla u_h\cdot n_F}\,\jump{\nabla v_h\cdot n_F}\,\ds	
	&\le \Hnorm{u_h}\,\Hnorm{v_h},\\
	\sum_{F\in\InnerEdges} h_F \int_F \jump{\nabla^2 u_h}\cdot \jump{\nabla^2 v_h}\,\ds
	&\le \Ctr^2\,(d+1)\,\Hnorm{u_h}\,\Hnorm{v_h}.
	\end{split}
	\end{equation}
	from which we deduce
	\begin{equation}
		\label{eq:jump_bounded_by_norm}
		J(u_h,v_h)
		\le
		\varh(){\eta_1 + \eta_2\,\Ctr^2\,(d+1)}\,\Hnorm{u_h}\,\Hnorm{v_h}.
	\end{equation}
	The inequalities \eqref{eq:bilinear_form_bounded_by_norm} and \eqref{eq:jump_bounded_by_norm} lead to \eqref{eq:ah_bounded}.

	The coercivity follows from \cref{lem:Discrete_Miranda_Talenti_with_Jumps},
	taking into account the \Cordes\ condition \eqref{eq:CordesCondition} with
	the estimate from \cref{lem:consequence_cordes} and Young's inequality with weight $\xi>0$, \ie,
	\begin{align}
		\label{eq:discrete_coercivity_proof}
		&\int_\Omega \gamma A\dprod\FEHessian{u_h}\,\FELaplace{u_h}\dx
		= \norm{\FELaplace{u_h}}_{L^2(\Omega)}^2 + \int_\Omega(\gamma A - I) \dprod \FEHessian{u_h}\,\FELaplace{u_h}\dx
		\nonumber\\
		&\qquad\ge \norm{\FELaplace{u_h}}_{L^2(\Omega)}^2
		- \sqrt{1-\constCordes}\,\norm{\FELaplace{u_h}}_{L^2(\Omega)}\nonumber\\
		&\qquad\quad\times\Biggh(){\norm{\FELaplace{u_h}}_{L^2(\Omega)}^2 + C_1 \sum_{F\in\InnerEdges} h_F^{-1} \norm{\jump{\nabla u_h\cdot n_F}}_{L^2(F)}^2
		+ C_2 \sum_{F\in\InnerEdges} h_F \norm{\jump{\nabla^2 u_h}}_{L^2(F)}^2}^{1/2} \nonumber\\
		&\qquad \ge \varh(){1- \frac{\xi \, (1-\constCordes) + \xi^{-1}}2} \norm{\FELaplace{u_h}}_{L^2(\Omega)}^2 \nonumber\\
		&\qquad\quad - \frac{1}{2\xi} \Biggh(){C_1 \sum_{F\in\InnerEdges} h_F^{-1} \norm{\jump{\nabla u_h\cdot n_F}}_{L^2(F)}^2
		+ C_2 \sum_{F\in\InnerEdges} h_F\,\norm{\jump{\nabla^2 u_h}}_{L^2(F)}^2}.
	\end{align}
	The jump terms in \eqref{eq:discrete_coercivity_proof} can be canceled by the stabilization
	terms from \eqref{eq:Discrete_bilinear_form_stab}.
	We use the choice $\xi = (1-\constCordes)^{-1}$ and insert the estimate from \cref{eq:Bound_Laplace_FELaplace} to arrive at
	\begin{align}
	\label{eq:ellipticity_proof}
		&a_h(u_h,u_h) + J_h(u_h,u_h)
		\ge \frac{\constCordes}2\, \norm{\FELaplace{u_h}}_{L^2(\Omega)}^2 
		+ \varh(){\eta_1 - \frac{(1-\varepsilon)\,C_1}2} \sum_{F\in\InnerEdges} h_F^{-1} \norm{\jump{\nabla u_h\cdot n_F}}_{L^2(F)}^2 \nonumber\\
		&\qquad + \varh(){\eta_2 - \frac{(1-\varepsilon)\,C_2}2} \sum_{F\in\InnerEdges} h_F \norm{\jump{\nabla^2 u_h}}_{L^2(F)}^2 \nonumber\\
		&\quad\ge \frac{\constCordes}6 \norm{\Delta_h u_h}_{L^2(\Omega)}^2 + \varh(){\eta_1 - \frac{(1-\varepsilon)\,C_1}2- \frac{d+1}6\,\constCordes\,\Ctr^2 } \sum_{F\in\InnerEdges} h_F^{-1} \norm{\jump{\nabla u_h\cdot n_F}}_{L^2(F)}^2 \nonumber\\
		&\qquad + \varh(){\eta_2 - \frac{(1-\varepsilon)\,C_2}2 - \frac{d+1}{12}\,\constCordes\,\CE^2} \sum_{F\in\InnerEdges} h_F \norm{\jump{\nabla^2 u_h}}_{L^2(F)}^2
		.
	\end{align}
	Taking into account the assumptions \eqref{eq:assumptions_penalty_parameters} we may further estimate
	\begin{equation*}
		a_h(u_h,u_h) + J_h(u_h,u_h) \ge \frac\constCordes6\,\varh(){\norm{\Delta_h u_h}_{L^2(\Omega)}^2 + \sum_{F\in\InnerEdges} h_F^{-1}\,\norm{\jump{\nabla u_h\cdot n_F}}_{L^2(F)}^2}.
	\end{equation*}
	The right-hand side forms a norm on $H_h^2(\Omega)\cap H_0^1(\Omega)$ which is equivalent to the norm defined in \eqref{eq:def_Hh2_norm}. This is a consequence of a Miranda-Talenti estimate for the broken Hessian, see \cite{NeilanWu2019}.

	The Lax-Milgram Lemma finally implies the existence and uniqueness of a discrete solution $u_h\in \uSpaceZero$ of \eqref{eq:DiscreteProblem}.
\end{proof}

\begin{remark}
	\label{remark:assumptions_penalty_parameters}
	The assumption \eqref{eq:assumptions_penalty_parameters} can be relaxed such that the choice $\eta_1=\eta_2=0$ is also feasible. This requires the following modification in the proof of the previous theorem. As $\norm{\FELaplace{\cdot}}_{L^2(\Omega)}$ is also a norm in the finite-dimensional space $\uSpaceZero$, there exists a constant $\CH$ independent of $\constCordes$ such that the estimate $\norm{\FELaplace{u_h}}_{L^2(\Omega)}^2 \ge	\CH\,\Hnorm{u_h}^2$ is valid for all $u_h\in \uSpaceZero$. Using this estimate and \eqref{eq:jump_bounded_by_norm_no_penalty_parameters} we can modify the last step in \eqref{eq:ellipticity_proof} to arrive at
	\begin{align*}
		&a_h(u_h,u_h) + J_h(u_h,u_h)
		\ge \frac{\constCordes}2\, \norm{\FELaplace{u_h}}_{L^2(\Omega)}^2 
		+ \varh(){\eta_1 - \frac{(1-\varepsilon)\,C_1}2} \sum_{F\in\InnerEdges} h_F^{-1}\,\norm{\jump{\nabla u_h\cdot n_F}}_{L^2(F)}^2 \\
		&\qquad + \varh(){\eta_2 - \frac{(1-\varepsilon)\,C_2}2} \sum_{F\in\InnerEdges} h_F\, \norm{\jump{\nabla^2 u_h}}_{L^2(F)}^2 \\
		&\ge \frac{\constCordes}4\, \norm{\FELaplace{u_h}}_{L^2(\Omega)}^2
		+ \varh(){\eta_1- \frac{(1-\varepsilon)\,C_1}2+\frac{\constCordes\,\CH}{8}}\,\sum_{F\in\InnerEdges} h_F^{-1}\, \norm{\jump{\nabla u_h\cdot n_F}}_{L^2(F)}^2 \\
		&\qquad + \varh(){\eta_2 - \frac{(1-\varepsilon)\,C_2}2+ \frac{\constCordes\,\CH}{8\,\Ctr^2\,(d+1)}} \sum_{F\in\InnerEdges} h_F\, \norm{\jump{\nabla^2 u_h}}_{L^2(F)}^2.
	\end{align*}	
	One observes that coercivity of $a_h+J_h$ can be guaranteed without the presence of the penalty terms, provided that $4\,(1-\constCordes)\,C_1 \le \constCordes\,\CH$ and $4\,\Ctr\,(d+1)\,(1-\constCordes)\,C_2 \le \constCordes\,\CH$ are fulfilled. Note that $C_1,C_2,\Ctr$ and $\CH$ are independent of $\constCordes$. Thus, these inequalities are valid when $\varepsilon$ is sufficiently close to $1$. 
	
    In the numerical experiments we observed that neglecting the penalty terms $J_h$ is in most situations feasible, but has negative influence on the robustness of preconditioned iterative solvers. However, the experimental convergence rates are better when the penalty terms are omitted, see \cref{sec:cordes_violated}.
\end{remark}

\subsection{A~priori and a~posteriori error estimates}
\label{sec:ErrorEstimatesFEHessian}

This section is devoted to the a~priori and a~posteriori error analysis of the finite
element approximation \eqref{eq:DiscreteProblem}.

\begin{theorem}[A priori error estimate]
	\label{thm:a_priori_estimate}
	Let $\Omega\subset\R^d$ be a bounded and convex domain.
	Assume that the penalty parameters $\eta_1,\eta_2$ in $J_h$ satisfy
	\eqref{eq:assumptions_penalty_parameters} and that the solution $u$ of
	\eqref{eq:MainProblem} belongs to $H^s(\Omega)$ with
	\begin{equation}
		\label{eq:regularity_assumption}
		s\le p+1\ \text{and}\ 
		s {} > {} \begin{cases}
			5/2, &\text{if}\ \FEHessian{}=\FEHessianCG{},\\
			2, &\text{if}\ \FEHessian{}=\FEHessianDG{}.
		\end{cases}
	\end{equation}
	The approximate solutions $u_h\in\uSpaceZero$ of \eqref{eq:DiscreteProblem} with $p\ge 2$ fulfill the a~priori error estimate
	\begin{equation*}
		\Hnorm{u-u_h} \le c_{\kappa,p,d}\,\varh(){1+\frac{\Ctr(1+\eta_1+\eta_2)}{\varepsilon\,\alpha_0}}\,h^{s-2}\,\norm{u}_{H^s(\Omega)}.
	\end{equation*}
\end{theorem}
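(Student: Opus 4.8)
The plan is to run a second Strang lemma argument for the non-conforming scheme \eqref{eq:DiscreteProblem}, splitting the error through a finite element interpolant. Let $I_h u \in \uSpaceZero$ denote the nodal Lagrange interpolant of $u$, which is well defined because $s > 5/2 > d/2$ forces $u \in C(\overline\Omega)$. By the triangle inequality
\[
	\Hnorm{u - u_h} \le \Hnorm{u - I_h u} + \Hnorm{u_h - I_h u},
\]
and the first summand is controlled by the standard interpolation estimate $\Hnorm{u - I_h u} \le c_{\kappa,p,d}\,h^{s-2}\,\norm{u}_{H^s(\Omega)}$, where the constraint $s\le p+1$ guarantees the optimal rate for the second-derivative part of the norm (the element-wise Hessian part follows from Bramble-Hilbert, the facet-jump part from a trace inequality together with $\jump{\nabla u\cdot n_F}=0$ for $u\in H^2(\Omega)$). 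It then remains to estimate the discrete quantity $e_h \coloneqq u_h - I_h u \in \uSpaceZero$.

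I would begin from the coercivity \eqref{eq:discrete_ellipticity}, which gives $\constCordes\,\alpha_0\,\Hnorm{e_h}^2 \le a_h(e_h,e_h)+J_h(e_h,e_h)$. Replacing $a_h(u_h,e_h)+J_h(u_h,e_h)$ by $F_h(e_h)$ via the discrete equation \eqref{eq:DiscreteProblem}, and inserting $\pm\varh(){a_h(u,e_h)+J_h(u,e_h)}$, the right-hand side splits as
\[
	\varh(){F_h(e_h) - a_h(u,e_h) - J_h(u,e_h)} + \varh(){a_h(u - I_h u, e_h) + J_h(u - I_h u, e_h)},
\]
a consistency contribution plus an approximation contribution.

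For the consistency term the decisive point is that the jumps of the strong solution vanish under the regularity assumption \eqref{eq:regularity_assumption}: for $\FEHessian{}=\FEHessianCG{}$ one needs $\jump{\nabla^2 u}=0$, which holds once $s>5/2$ so that $\nabla^2 u\in H^{s-2}(\Omega)$ admits single-valued facet traces, whereas for $\FEHessian{}=\FEHessianDG{}$ one takes $C_2=\eta_2=0$ and only $\jump{\nabla u\cdot n_F}=0$ is required, valid for $u\in H^2(\Omega)$. Hence $J_h(u,e_h)=0$. Since $\FEHessian{u}$ is the $L^2(\Omega)$-projection of $\nabla^2 u$ onto $\HSpace(\R^{d\times d})$ by \cref{lem:best_approximation_Hessian} and $A\dprod\nabla^2 u = f$ almost everywhere, the residual collapses to
\[
	F_h(e_h) - a_h(u,e_h) = \int_\Omega \gamma\,A\dprod\bigh(){\nabla^2 u - \FEHessian{u}}\,\FELaplace{e_h}\,\dx.
\]
I would bound this by Cauchy-Schwarz with $\norm{\gamma A}_{L^\infty(\Omega)}\le 2$ (\cref{lem:consequence_cordes}), the best-approximation property of \cref{lem:best_approximation_Hessian} combined with standard approximation theory to get $\norm{\nabla^2 u - \FEHessian{u}}_{L^2(\Omega)} \le c\,h^{s-2}\norm{u}_{H^s(\Omega)}$ (admissible since $\HSpace$ has degree $p$), and the stability estimate \eqref{eq:stability_hessian} to obtain $\norm{\FELaplace{e_h}}_{L^2(\Omega)}\le c_d\,\Ctr\,\Hnorm{e_h}$. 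Altogether the consistency term is at most $c\,\Ctr\,h^{s-2}\,\norm{u}_{H^s(\Omega)}\,\Hnorm{e_h}$.

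For the approximation term, $a_h(u-I_h u,e_h)$ is bounded as in \eqref{eq:bilinear_form_bounded_by_norm} by $c\,\Ctr^2\,\Hnorm{u-I_h u}\,\Hnorm{e_h}$, while $J_h(u-I_h u,e_h)$ is bounded as in \eqref{eq:jump_bounded_by_norm} by $\varh(){\eta_1+\eta_2\,\Ctr^2(d+1)}\,\Hnorm{u-I_h u}\,\Hnorm{e_h}$; here the $\eta_1$-jump seminorm of $u-I_h u$ is already part of $\Hnorm{u-I_h u}$ through \eqref{eq:def_Hh2_norm}, whereas the $\eta_2$ seminorm $\sum_{F\in\InnerEdges} h_F\,\norm{\jump{\nabla^2(u-I_h u)}}_{L^2(F)}^2$ must be estimated separately by a trace-plus-interpolation argument (using $\jump{\nabla^2 u}=0$), which is precisely where $s>5/2$ re-enters in the continuous Galerkin case. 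Inserting the interpolation bound for $\Hnorm{u-I_h u}$, collecting both contributions, and dividing by $\Hnorm{e_h}$ yields
\[
	\Hnorm{e_h} \le \frac{c_{\kappa,p,d}\,\Ctr\,(1+\eta_1+\eta_2)}{\varepsilon\,\alpha_0}\,h^{s-2}\,\norm{u}_{H^s(\Omega)},
\]
after which the triangle inequality of the first paragraph produces the asserted estimate. I expect the consistency step to be the main obstacle: verifying that $J_h(u,\cdot)$ vanishes and that the residual reduces exactly to the Hessian projection error is where the non-conformity of the method and the thresholds in \eqref{eq:regularity_assumption} are genuinely used, with the interpolation estimate for the second-derivative jump seminorm being the other delicate point.
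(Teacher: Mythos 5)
Your proposal is correct and follows essentially the same route as the paper's proof: split via the nodal interpolant, discrete coercivity for the discrete part, the strong form combined with the $L^2$-projection property and stability of the recovered Hessian for the residual, vanishing of $J_h(u,\cdot)$ under \eqref{eq:regularity_assumption}, and a trace-plus-interpolation (Dupont--Scott) argument for the penalty terms applied to $u-\I_{\uSpace}(u)$. The only cosmetic difference is that you insert $\pm\bigl(a_h(u,\cdot)+J_h(u,\cdot)\bigr)$ to separate a consistency term from an approximation term, whereas the paper merges these directly into the single term $\int_\Omega \gamma A\dprod\bigl(\nabla^2 u - \FEHessian{\I_{\uSpace}(u)}\bigr)\,\FELaplace{w_h}\,\dx$ and then splits it by the triangle inequality --- algebraically the identical decomposition.
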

\begin{proof}
	We introduce the nodal interpolant $\I_{\uSpace}(u)$ as an intermediate function and deduce with \cref{lem:H2h_norm_bound} and standard interpolation error estimates 
	\begin{align}\label{eq:interpolation_error}
		\Hnorm{u-\I_{\uSpace}(u)}
		&\le c_{\kappa,d}\left(h^{-1}\,\norm{\nabla(u-\I_{\uSpace}(u))}_{L^2(\Omega)} + \norm{\nabla_h^2(u-\I_{\uSpace}(u))}_{L^2(\Omega)}\right)\nonumber\\
		&\le c_{\kappa,p,d}\,h^{s-2}\,\abs{u}_{H^s(\Omega)}.
	\end{align}

	Next, we derive an estimate for the norm of the discrete function $w_h \coloneqq u_h-\I_{\uSpace}(u)$.
	Therefore, we apply the discrete ellipticity \eqref{eq:discrete_ellipticity}, the definition
	of $u_h$ and the strong formulation \eqref{eq:strong_form_L} taking into
	account $\FELaplace{w_h}\in \HSpace \subset L^2(\Omega)$ as well as $J_h(u,w_h)=0$ which holds under the assumption \eqref{eq:regularity_assumption}. These arguments imply
	\begin{align}\label{eq:a_priori_fully_discrete}
		\constCordes\,\alpha_0 \, \Hnorm{u_h - \I_{\uSpace}(u)}^2
		&\le a_h(u_h-\I_{\uSpace}(u), w_h) + J_h(u_h-\I_{\uSpace}(u), w_h) \nonumber\\
		&= \int_\Omega\gamma f\,\FELaplace{w_h}\,\dx - a_h(\I_{\uSpace}(u), w_h)
		- J_h(\I_{\uSpace}(u), w_h) \nonumber\\
		&= \int_\Omega\gamma A \dprod (\nabla^2 u - \FEHessian{\I_{\uSpace}(u)})\,\FELaplace{w_h}\,\dx + J_h(u-\I_{\uSpace}(u),w_h).
	\end{align}
	With the triangle inequality, \cref{lem:best_approximation_Hessian,lem:H2h_norm_bound} and standard interpolation error estimates we conclude
	\begin{align*}
		&\int_\Omega\gamma A \dprod \varh(){\nabla^2 u - \FEHessian{\I_{\uSpace}(u)}}\, \FELaplace{w_h}\,\dx \\
		&\qquad \le \norm{\gamma A}_{L^\infty(\Omega)}\,\varh(){\norm{\nabla^2 u - \FEHessian{u}}_{L^2(\Omega)} + \norm{\FEHessian{u-\I_{\uSpace}(u)}}_{L^2(\Omega)}} \,\norm{\FELaplace{w_h}}_{L^2(\Omega)} \\
		&\qquad \le c_{\kappa,p,d}\,\Ctr^2\,h^{s-2}\,\norm{\gamma A}_{L^\infty(\Omega)}\,\abs{u}_{H^s(\Omega)}\,\Hnorm{w_h}.
	\end{align*}
	Note again that $\norm{\gamma\,A}_{L^\infty(\Omega)}\le 2$ holds due to \eqref{eq:CordesCondition}. Analogously, we derive the following estimate for the jump terms in \eqref{eq:a_priori_fully_discrete}
	\begin{align*}
		 J_h(u-\I_{\uSpace}(u),w_h) 
		&\le \Ctr\,\Bigg(\sum_{F\in \InnerEdges} \Big(
		\eta_1^2\,h_F^{-1}\,\norm{\jump{\nabla(u-\I_{\uSpace}(u))\cdot n_F}}_{L^2(F)}^2 \\
		&\phantom{\le \Ctr\,\Bigg(\sum_{F\in \InnerEdges}}
		+ \eta_2^2\,h_F\,\norm{\jump{\nabla^2(u-\I_{\uSpace}(u))}}_{L^2(F)}^2\Big)\Bigg)^{1/2}\,\Hnorm{w_h} \\
		&\le c_{\kappa,p,d}\,\Ctr\,(\eta_1+\eta_2)\,h^{s-2}\,\norm{u}_{H^s(\Omega)}\,\Hnorm{w_h}.
	\end{align*}
	The latter step follows from the trace theorem $\norm{v}_{H^2(\partial \widehat T)}\le c_\kappa\,\norm{v}_{H^s(\widehat T)}$ on the reference element $\widehat T$ and the polynomial approximation results in fractional-order Sobolev spaces from \cite{DupontScott1980}.	
	After insertion of the previous two estimates into \eqref{eq:a_priori_fully_discrete}
	we arrive, together with \eqref{eq:interpolation_error}, at the assertion.
\end{proof}

\begin{theorem}[A posteriori error estimate]\label{thm:estimator_reliable}
	Under the assumptions of \cref{thm:a_priori_estimate}
	the solutions $u_h$ of \eqref{eq:DiscreteProblem} fulfill the a~posteriori error estimate
	\begin{equation}
	\label{eq:a_posteriori_estimate}
		\norm{u-u_h}_{H^2_h(\Omega)}^2 \le
		\sum_{T\in\TT_h} \Biggh(){
		 c_1\,\norm{\gamma f - \gamma A\dprod \nabla^2 u_h}_{L^2(T)}^2
		+c_2\,\sum_{F\in\FF_T\cap\InnerEdges} h_F^{-1}\norm{\jump{\nabla u_h\cdot n_F}}_{L^2(F)}^2
		},
	\end{equation}
	with $c_1 \coloneqq 4\,C_a^2$ and $c_2 \coloneqq \frac12(1+(2+16\,C_a^2)\,\CE^2)$,
	where $\FF_T$ denotes the set of facets of the element $T\in\TT_h$.
	The constants $C_a$ and $\CE$ are defined in \cref{lem:existence} and \eqref{eq:H2_lifting_estimate}.
\end{theorem}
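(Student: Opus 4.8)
The goal is to bound the genuine error $\norm{u-u_h}_{H^2_h(\Omega)}$ by computable quantities, and the natural strategy is to compare both $u$ and $u_h$ against the $H^2$-conforming lift $\uconf = \LiftHCT(u_h)$ introduced in the proof of \cref{lem:Discrete_Miranda_Talenti_with_Jumps}. First I would split the error via the triangle inequality as
\begin{equation*}
	\Hnorm{u-u_h} \le \Hnorm{u-\uconf} + \Hnorm{\uconf-u_h}.
\end{equation*}
The second summand is controlled directly by the lifting estimate \eqref{eq:H2_lifting_estimate}: since $\uconf-u_h$ is piecewise polynomial and the jump terms in $\Hnorm{\cdot}$ can also be absorbed (using that $u$ contributes no jumps), this term is bounded by $\CE$ times the jump indicator $\big(\sum_{F}h_F^{-1}\norm{\jump{\nabla u_h\cdot n_F}}_{L^2(F)}^2\big)^{1/2}$, which already accounts for the second family of terms on the right-hand side of \eqref{eq:a_posteriori_estimate}.

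For the first summand I would exploit that $w\coloneqq u-\uconf \in X = H^2(\Omega)\cap H_0^1(\Omega)$, so the continuous Miranda–Talenti machinery applies. The key idea is to use the $H^2$-ellipticity of the continuous bilinear form $a$ from \cref{lem:existence}: one has $\Hnorm{w} = \norm{\nabla^2 w}_{L^2(\Omega)}$ for a conforming function, and by the a~priori bound of \cref{lem:existence} the norm $\norm{w}_{H^2(\Omega)}$ is controlled by $C_a\,\norm{A\dprod\nabla^2 w}_{L^2(\Omega)}$ (equivalently by the residual of the rescaled equation tested against $\gamma\Delta(\cdot)$). I would then insert the PDE $A\dprod\nabla^2 u = f$ to rewrite $A\dprod\nabla^2 w = (f - A\dprod\nabla^2\uconf)$ and, since $\uconf$ and $u_h$ differ only through the lifting, replace $\nabla^2\uconf$ by $\nabla_h^2 u_h$ at the cost of another $\CE$-weighted jump term via \eqref{eq:H2_lifting_estimate}. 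This produces the elementwise residual $\norm{\gamma f - \gamma A\dprod\nabla^2 u_h}_{L^2(T)}$, which is the first family of terms in \eqref{eq:a_posteriori_estimate}, with the factor $C_a^2$ entering through the squared stability constant.

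The bookkeeping step is then to collect the two estimates, square them, and use $(a+b)^2\le 2a^2+2b^2$ to obtain the stated constants $c_1 = 4C_a^2$ and $c_2 = \tfrac12(1+(2+16C_a^2)\CE^2)$; the appearance of $16C_a^2$ signals that the residual bound is applied once more to convert the conforming-error contribution into the jump indicator, compounding the $C_a$ and $\CE$ factors. The main obstacle I anticipate is the careful passage between the broken Hessian $\nabla_h^2 u_h$ appearing in the computable estimator and the conforming Hessian $\nabla^2\uconf$ demanded by the continuous Miranda–Talenti estimate: every such exchange must be paid for by \eqref{eq:H2_lifting_estimate}, and one has to track precisely how many times this lifting bound is invoked so that the constants accumulate correctly. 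A secondary subtlety is ensuring that the boundary and tangential jump contributions genuinely vanish — this relies on $u_h\in\uSpaceZero$ being $C^0$-conforming so that only the normal jumps $\jump{\nabla u_h\cdot n_F}$ survive, exactly as exploited in the derivation of \eqref{eq:CG_lifting_error_estimate}.
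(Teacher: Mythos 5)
Your plan reproduces the paper's overall architecture --- the same splitting of the error via the conforming lift $\uconf=\LiftHCT(u_h)$, the same use of the lifting estimate \eqref{eq:H2_lifting_estimate} to turn both the nonconformity $u_h-\uconf$ and the exchanged Hessians into the jump indicator, and the same reliance on \cref{lem:existence} --- but it replaces the paper's key step by a genuinely different, and legitimate, one. The paper runs a duality argument: it defines the normalized functional $J(\varphi)=\int_\Omega\nabla^2(u-\uconf)\dprod\nabla^2\varphi\,\dx\,/\,\norm{\nabla^2(u-\uconf)}_{L^2(\Omega)}$, solves the adjoint problem $a(\varphi,z)=J(\varphi)$ with $\norm{z}_{H^2(\Omega)}\le C_a$, and computes $\norm{\nabla^2(u-\uconf)}_{L^2(\Omega)}=a(u-\uconf,z)=\int_\Omega(\gamma f-\gamma A\dprod\nabla^2\uconf)\,\Delta z\,\dx$. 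You instead observe that $w=u-\uconf\in X$ solves the strong-form equation $A\dprod\nabla^2 w=f-A\dprod\nabla^2\uconf$ with homogeneous boundary data and apply the a~priori bound of \cref{lem:existence} directly; by uniqueness this identification is sound, and it spares you the construction of the dual problem. Both routes end in the same residual-type bound, so your approach buys a shorter argument at no real cost.

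Two points need care if you want the theorem with exactly the stated constants. First, the $\gamma$-weighting: \cref{lem:existence} as stated bounds $\norm{w}_{H^2(\Omega)}$ by $C_a$ times the \emph{unweighted} datum, so your route produces the residual $f-A\dprod\nabla^2 u_h$; converting it to the weighted residual $\gamma f-\gamma A\dprod\nabla^2 u_h$ of \eqref{eq:a_posteriori_estimate} costs a factor $\norm{1/\gamma}_{L^\infty(\Omega)}$, and, worse, when you exchange $\nabla^2\uconf$ for $\nabla_h^2 u_h$ inside the residual you would face $\norm{A}_{L^\infty(\Omega)}$, which the \Cordes\ condition does not bound by an absolute constant. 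You must therefore work with the rescaled equation throughout (as your parenthetical hints), i.e.\ use the stability estimate in the weighted form $\norm{w}_{H^2(\Omega)}\le C_a\norm{\gamma g}_{L^2(\Omega)}$ that coercivity of $a$ actually delivers; then \cref{lem:consequence_cordes} gives $\norm{\gamma A}_{L^\infty(\Omega)}\le 2$, which is precisely the source of the factor $16\,C_a^2$ inside $c_2$. Second, your plain triangle inequality $\Hnorm{u-u_h}\le\Hnorm{u-\uconf}+\Hnorm{\uconf-u_h}$, after squaring, doubles the jump contribution and yields $1+\tfrac12\,(2+16\,C_a^2)\,\CE^2$ in place of $c_2$. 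The paper avoids this loss by noting that $\norm{\jump{\nabla(u-u_h)\cdot n_F}}_{L^2(F)}=\norm{\jump{\nabla u_h\cdot n_F}}_{L^2(F)}$ on interior facets, so the jump portion of $\norm{u-u_h}_{H^2_h(\Omega)}^2$ equals the jump indicator identically and only the Hessian part is split by the triangle inequality; this is where the $\tfrac12$ in $c_2=\tfrac12\,(1+(2+16\,C_a^2)\,\CE^2)$ comes from.
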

\begin{proof}
	As in the proof of \cref{lem:Discrete_Miranda_Talenti_with_Jumps} we introduce
	the lifting operator $\LiftHCT$
	which maps functions from $\uSpace$ into the $H^2(\Omega)$-conforming
	HCT or virtual finite element space $\uSpaceConf$.
	With this operator at hand we introduce a further
	approximation of the finite element solution $u_h$, namely $\uconf = \LiftHCT(u_h)\in \uSpaceConf$.

	With the triangle inequality, the definition of the norm in $H_h^2(\Omega)\cap H_0^1(\Omega)$
	and the fact that the jump terms vanish for $u\in H^2(\Omega)$
	we may represent the error term under consideration by 
	\begin{align}\label{eq:a_posteriori_norm_splitting}
		\norm{u-u_h}_{H^2_h(\Omega)}^2
		&\le 2\,\Biggh(){\norm{\nabla^2 (u - \uconf)}_{L^2(\Omega)}^2
		+ \sum_{T\in\TT_h} \norm{\nabla^2 (\uconf - u_h)}_{L^2(T)}^2}\nonumber\\
		&\qquad + \sum_{F\in\InnerEdges} h_F^{-1} \norm{ \jump{\nabla u_h\cdot n_F}}_{L^2(F)}^2.
	\end{align}
	
	We start by proving an estimate for the first term
	on the right-hand side of \eqref{eq:a_posteriori_norm_splitting}. 
	We define the error functional $J\in X'$ (recall that $X=H^2(\Omega)\cap H_0^1(\Omega)$)
	\begin{equation}\label{eq:def_J}
		J(\varphi) \coloneqq \frac{\int_\Omega \nabla^2 (u-\uconf)\dprod\nabla^2 \varphi\,\dx}{\norm{\nabla^2 (u-\uconf)}_{L^2(\Omega)}}
	\end{equation}
	and easily confirm
	\begin{equation*}
		\norm{J}_{X'} = \sup_{\genfrac{}{}{0pt}{}{\varphi\in X}{\varphi\ne 0}} \frac{\abs{J(\varphi)}}{\norm{\varphi}_X}
		\le 1.
	\end{equation*}
	This functional forms the right-hand side of a dual equation
	\begin{equation*}
		a(\varphi, z) = J(\varphi) \quad \forall \varphi\in X
	\end{equation*}
	and from \cref{lem:existence} we conclude the existence of a unique
	solution $z\in X$ satisfying
	\begin{equation}\label{eq:a_priori_z}
		\norm{z}_{H^2(\Omega)} \le C_a\,\norm{J}_{X'} \le C_a.
	\end{equation}

	The definition of the lifting operator $\LiftHCT$ guarantees $u-\uconf\in X$ and thus,
	\begin{equation*}
		\norm{\nabla^2(u-\uconf)}_{L^2(\Omega)}
		= J(u-\uconf) = a(u-\uconf, z).
	\end{equation*}
	The right-hand side of the previous equation is treated as follows. We apply \eqref{eq:weak_form}, insert the intermediate function $\gamma\,A\dprod\nabla_h^2 u_h$, apply the Cauchy-Schwarz inequality as well as \eqref{eq:a_priori_z} to obtain
	\begin{align}\label{eq:a_posteriori_splitting}
		&\norm{\nabla^2(u-\uconf)}_{L^2(\Omega)} = a(u-\widetilde u_h, z)\nonumber\\
		&\quad = \int_\Omega \left(\gamma f - \gamma A \dprod \nabla^2 \uconf\right)\Delta z\,\dx\nonumber\\ 
		&\quad = \sum_{T\in\TT_h}\int_T \left((\gamma f- \gamma A \dprod \nabla^2 u_h) + \gamma A\dprod \nabla^2(u_h - \uconf)\right)\Delta z\,\dx \nonumber\\
		&\quad\le C_a\,\sqrt{2}\,\Biggh(){\sum_{T\in\TT_h} \Bigh(){\norm{\gamma f-\gamma A\dprod \nabla^2 u_h}_{L^2(T)}^2 + \norm{\gamma A}_{L^\infty(\Omega)}^2 \norm{\nabla^2(u_h - \uconf)}_{L^2(T)}^2}}^{1/2}.
	\end{align}
	Finally, using $\norm{\gamma\,A}_{L^\infty(\Omega)}\le 2$, insertion of \eqref{eq:a_posteriori_splitting} 
	into \eqref{eq:a_posteriori_norm_splitting} and applying the estimate
	\eqref{eq:lifting_error_estimate} for the lifting error terms leads to the desired result.
\end{proof}
The error estimate from the previous lemma provides a local a~posteriori error estimator, namely
\begin{equation}\label{eq:local_estimator}
	\eta_T^2(u_h)\coloneqq \norm{\gamma f - \gamma A\dprod \nabla^2 u_h}_{L^2(T)}^2 +
	\sum_{F\in\FF_T\cap\InnerEdges} h_F^{-1}\norm{\jump{\nabla u_h\cdot n_F}}_{L^2(F)}^2,
\end{equation}
and a global estimator
\begin{equation}\label{eq:global_estimator}
	\eta^2(u_h)\coloneqq\sum_{T\in\TT_h} \eta_T^2(u_h)
\end{equation}
which is a reliable bound for the error $\Hnorm{u-u_h}$.

\begin{theorem}
	Let the assumptions of \cref{thm:a_priori_estimate} be fulfilled.
	The a~posteriori error estimate \eqref{eq:a_posteriori_estimate} is sharp in the sense that
	\begin{equation*}
		\frac12\,\eta_T^2(u_h)\le \norm{u-u_h}_{H^2_h(T)} \coloneqq \Biggh(){\norm{\nabla^2(u-u_h)}_{L^2(T)}^2 + \sum_{F\in \FF_T\cap\InnerEdges}h_F^{-1}\norm{\jump{\nabla (u-u_h)\cdot n_F}}_{L^2(F)}^2}^{1/2}.
	\end{equation*}
\end{theorem}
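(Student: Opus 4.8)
The plan is to read this as the efficiency counterpart to the reliability bound of \cref{thm:estimator_reliable}: rather than bounding the error by the estimator, I bound the local estimator $\eta_T^2(u_h)$ from above by the local error. Accordingly I split
\begin{equation*}
	\eta_T^2(u_h) = R_T^2 + J_T^2, \qquad R_T \coloneqq \norm{\gamma f - \gamma A \dprod \nabla^2 u_h}_{L^2(T)}, \quad J_T^2 \coloneqq \sum_{F\in\FF_T\cap\InnerEdges} h_F^{-1}\,\norm{\jump{\nabla u_h\cdot n_F}}_{L^2(F)}^2,
\end{equation*}
and treat the jump part $J_T$ and the residual part $R_T$ separately, comparing each with the matching contribution to $\norm{u-u_h}_{H^2_h(T)}^2$. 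The decisive structural input is that $u$ is the \emph{strong} solution of \eqref{eq:MainProblem} and lies in $H^2(\Omega)$, which is why no data-oscillation remainder appears.

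For the jump part the comparison is an exact identity. Because $u\in H^2(\Omega)$ has gradient in $H^1(\Omega;\R^d)$, the normal derivative is single-valued across interior facets, so $\jump{\nabla u\cdot n_F}=0$ for every $F\in\InnerEdges$. Hence $\jump{\nabla u_h\cdot n_F}=-\jump{\nabla(u-u_h)\cdot n_F}$ and therefore
\begin{equation*}
	J_T^2 = \sum_{F\in\FF_T\cap\InnerEdges} h_F^{-1}\,\norm{\jump{\nabla(u-u_h)\cdot n_F}}_{L^2(F)}^2,
\end{equation*}
which is precisely the jump part of $\norm{u-u_h}_{H^2_h(T)}^2$; no inequality is even needed here.

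For the residual part I would use that, as a strong solution, $u$ satisfies $A\dprod\nabla^2u=f$ almost everywhere, so after multiplying by the scalar weight $\gamma\in L^\infty(\Omega)$ one has $\gamma A\dprod\nabla^2u=\gamma f$ a.e. Elementwise this rewrites the residual as $\gamma f-\gamma A\dprod\nabla^2u_h=\gamma A\dprod\nabla^2(u-u_h)$, where $\nabla^2u_h$ is the Hessian of the polynomial $u_h|_T$. A pointwise Cauchy--Schwarz inequality for the Frobenius product gives $\abs{\gamma A\dprod\nabla^2(u-u_h)}\le\norm{\gamma A}_F\,\abs{\nabla^2(u-u_h)}_F$ a.e., and \cref{lem:consequence_cordes} bounds the weighted matrix through $\norm{\gamma A}_F\le\norm{I}_F+\norm{\gamma A-I}_F\le\sqrt{d}+\sqrt{1-\varepsilon}$. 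Integrating over $T$ yields $R_T\le(\sqrt{d}+\sqrt{1-\varepsilon})\,\norm{\nabla^2(u-u_h)}_{L^2(T)}$, so $R_T^2$ is controlled by the Hessian part of $\norm{u-u_h}_{H^2_h(T)}^2$.

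Adding the two estimates produces a bound $\eta_T^2(u_h)\le c\,\norm{u-u_h}_{H^2_h(T)}^2$ with a constant governed solely by $\norm{\gamma A}_F$; the clean factor in the statement corresponds to the near-identity regime $\norm{\gamma A}_F^2\le 2$, in which $\frac12\eta_T^2(u_h)\le\norm{u-u_h}_{H^2_h(T)}^2$. I expect the only genuine difficulty to be this residual term, namely passing from the \emph{weighted} residual $\gamma A\dprod\nabla^2(u-u_h)$ to the plain Hessian error $\norm{\nabla^2(u-u_h)}_{L^2(T)}$: the size of the efficiency constant is dictated entirely by how close the rescaled coefficient $\gamma A$ is to the identity, i.e.\ by the \Cordes\ parameter $\varepsilon$, whereas the jump contribution matches the error exactly and requires no estimation at all.
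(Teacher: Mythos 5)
Your proposal follows the paper's proof almost step for step: the same residual/jump splitting of $\eta_T^2(u_h)$, the same exact identity for the jump part (from $\jump{\nabla u\cdot n_F}=0$ on interior facets, since $u\in H^2(\Omega)$), and the same rewriting $\gamma f-\gamma A\dprod\nabla^2 u_h=\gamma A\dprod\nabla^2(u-u_h)$ of the volume residual via the strong form, followed by pointwise Cauchy--Schwarz in the Frobenius product. The paper compresses all of this into three lines, quoting the bound $\norm{\gamma A}_{L^\infty(\Omega)}\le 2$ instead of your explicit matrix estimate.

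The one genuine shortfall is your final constant. Bounding $\norm{\gamma A}_F\le\norm{I}_F+\norm{\gamma A-I}_F\le\sqrt d+\sqrt{1-\varepsilon}$ via \cref{lem:consequence_cordes} is correct but lossy, and it forces you to leave the asserted factor $\frac12$ conditional on an unverified ``near-identity regime'' $\norm{\gamma A}_F^2\le 2$ --- so the theorem as stated is not actually proved. The hedge is avoidable, and the detour through the Cordes condition is what costs you: by the definition of the normalization $\gamma=\trace A/\norm{A}_F^2$ one has, pointwise a.e.\ in $\Omega$,
\begin{equation*}
	\norm{\gamma A}_F=\gamma\,\norm{A}_F=\frac{\trace A}{\norm{A}_F}\le\sqrt d,
\end{equation*}
simply because $\trace A=I\dprod A\le\sqrt d\,\norm{A}_F$ by Cauchy--Schwarz (positivity of $A$ gives $\trace A>0$; no Cordes condition is needed at all). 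For $d=2$ this is exactly the regime you postulated, $\norm{\gamma A}_F^2\le 2$, whence $\norm{\gamma f-\gamma A\dprod\nabla^2 u_h}_{L^2(T)}^2\le 2\,\norm{\nabla^2(u-u_h)}_{L^2(T)}^2$ and the factor $\frac12$ follows. (For $d=3$ the same computation yields $\sqrt3$ and hence the factor $\frac13$; note also that the paper's own quoted bound $\norm{\gamma A}_{L^\infty(\Omega)}\le 2$ would only deliver the factor $\frac14$, so the precise constant is delicate in the paper too. The substance --- local efficiency with a constant depending only on $\norm{\gamma A}_{L^\infty(\Omega)}$ and with no data-oscillation term --- is established identically by your argument and the paper's.)
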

\begin{proof}
	The jump terms from the left-hand side of the desired estimate appear also in the
	norm of the right-hand side.
	We merely have to take into account that $\jump{\nabla u\cdot n_F} = 0$
	a.e.\ on all interior facets $F\in\InnerEdges$.
	The volume residuals are bounded by the element-wise $H^2(\Omega)$-seminorm due to
	\begin{equation*}
		\norm{\gamma f-\gamma A \dprod \nabla^2 u_h}_{L^2(T)}
		= \norm{\gamma A \dprod (\nabla^2 u - \nabla^2 u_h)}_{L^2(T)}
		\le \norm{\gamma A}_{L^\infty(T)}\,\norm{\nabla^2(u-u_h)}_{L^2(T)}
	\end{equation*}
	and $\norm{\gamma\,A}_{L^\infty(\Omega)}\le 2$.
\end{proof}

\subsection{A method using the piecewise Hessian}
\label{sec:PiecewiseHessianApproach}

Instead of using Hessian recovery techniques for the realization of our method, as investigated in the previous sections, it is also possible to use the cellwise exact Hessian, \ie, $\FEHessian{} \coloneqq \nabla_h^2$. This idea is proposed in \cite{NeilanSalgadoZhang2017}. As the resulting bilinear form is not coercive additional jump penalty terms have to be added. The resulting equation reads
\begin{multline}\label{eq:direct_scheme}
		\text{Find}\ u_h\in \uSpaceZero \text{ s.t.}
		\\
		a_h(u_h,v_h) \coloneqq \int_\Omega \gamma A\colon \nabla_h^2 u_h\,\Delta_h v_h\,\dx
		+ \eta_1\,\sum_{F\in \InnerEdges}h_F^{-1} \int_F \jump{\nabla u_h\cdot n_F}\,\jump{\nabla v_h\cdot n_F}\,\ds
		= \int_\Omega \gamma f\,\Delta v_h\,\dx
\end{multline}
for all $v_h\in \uSpaceZero$. Under the assumption that $\eta_1>0$ is sufficiently large ($\eta_1=0$ is not allowed here) and that the \Cordes\ condition \eqref{eq:CordesCondition} is fulfilled with some $\varepsilon\in (0,1]$, it has been  proved in \cite[Lemma~4.3]{NeilanSalgadoZhang2017} that the bilinear form $a_h(\cdot,\cdot)$ is uniformly coercive on $\uSpaceZero$ and hence, \eqref{eq:direct_scheme} possesses a unique solution $u_h\in \uSpaceZero$. This is a direct consequence of a discrete Miranda-Talenti estimate similar to \cref{lem:Discrete_Miranda_Talenti_with_Jumps} and the techniques applied in the proof of \cref{lem:Lax_Milgram}.

Due to the consistency of this scheme, one can easily conclude the a~priori estimate
\begin{equation*}
	\Hnorm{u-u_h} \le C\,h^{s-2} \abs{u}_{H^s(\Omega)},\quad s\in [2,p+1],
\end{equation*}
provided that $u$ belongs to $H^s(\Omega)$.

A~posteriori error estimates can be derived with the same argument as in \cref{thm:estimator_reliable}. To be more precise, one can show by a slight modification of the proofs from the previous section that the estimator from \eqref{eq:global_estimator} is a reliable and sharp bound for $\Hnorm{u-u_h}$.

An advantage of the direct scheme \eqref{eq:direct_scheme} is that the computational effort is less than for our system \eqref{eq:DiscreteProblem}
since no additional equations for the computation of the Hessian approximation are needed.
As we will observe in our numerical experiments, the approximation properties for the error $u-u_h$ in the $H^2_h(\Omega)$-norm as well as in the $H^1(\Omega)$-norm will be the same for both approaches.
However, it turns out that the convergence rate in the $L^2(\Omega)$-norm is higher for
the approach studied in the previous sections.

\section{Numerical experiments}
\label{sec:experiments}

In this section, we perform different numerical experiments.
All implementations were done in \python\ using the finite element library \fenics~2019.1 \cite{AlnaesBlechtaHakeJohanssonKehletLoggRichardsonRingRognesWells2015,LoggWellsHake2012:1}.
\textbf{Our code is residing in a GitHub repository and it will be made publicly available upon acceptance of the manuscript.}

It is our purpose to compare four discretization approaches, \ie,
\begin{itemize}
	\item 
		the method using a finite-element Hessian with continuous and discontinuous trial functions (denoted by CG and DG in the following) discussed in the present article (\cref{sec:discretization_approach}), 
	\item
		the Petrov-Galerkin scheme (N) proposed by Neilan \cite{Neilan2017}, which likewise utilizes a DG finite-element Hessian but with $\tau_h = \text{id}$, \ie, there is no Laplacian acting on the test function,
	\item
		and the method using the piecewise Hessian proposed by Neilan, Salgado and Zhang (NSZ) \cite{NeilanSalgadoZhang2017} that we discussed briefly in \cref{sec:PiecewiseHessianApproach}.
\end{itemize}

\subsection{A problem with almost violated \Cordes\ condition}
\label{sec:cordes_violated}

\begin{table}[bht]
	\begin{center}
		\begin{tabular}{ccc@{\hskip 1cm}cc@{\hskip 1cm}cc}
			\toprule
			& \multicolumn{2}{c}{$\kappa=0.9$\quad\phantom{a}} & \multicolumn{2}{c}{$\kappa=0.99$\quad\phantom{a}} & \multicolumn{2}{c}{$\kappa=0.999$}\\
			$h$& $\eta_1=0$ & $\eta_1=1$ & $\eta_1=0$ & $\eta_1=1$ & $\eta_1=0$ & $\eta_1=1$\\ \midrule
            $2^{-3}$& 23&  13  &  25 & 13   & 26  &  13\\
            $2^{-4}$& 27&  17  &  25 & 17   & 25  &  17\\
            $2^{-5}$& 23&  19  &  25 & 19   & 30  &  19\\
            $2^{-6}$& 24&  19  &  26 & 19   & 27  &  19\\
            $2^{-7}$& 23&  19  &  25 & 20   & 27  &  20\\
            $2^{-8}$& 23&  20  &  25 & 27   & 26  &  20\\
			\bottomrule
		\end{tabular}
	\end{center}
    \caption{Iteration number for \gmres\ to achieve an absolute and relative
    tolerance of $10^{-8}$ for the method using a finite element Hessian with continuous trial functions, varying $\eta_1$ and fixed $\eta_2 = 0$.}
	\label{tab:iteration_numbers_nocordes}
\end{table}

We choose a problem on the unit square with matrix
\begin{equation*}
	A = \begin{pmatrix} 1 & \kappa \\ \kappa & 1 \end{pmatrix}
\end{equation*}
and determine the source term $f$ such that the smooth, exact solution of \eqref{eq:MainProblem} is given by
\begin{equation*}
	u(x) = \sin(2 \, \pi \, x_1) \, \sin(2 \, \pi \, x_2).
\end{equation*}
The matrix~$A$ fulfills the \Cordes\ condition if $\kappa \in(-1,1)$. If
$\kappa$ is sent to $1$, $\constCordes$ and hence the coercivity constant from \cref{lem:Lax_Milgram} will tend to zero
so that the problem is harder to solve with an iterative method like \gmres.
This behavior is also observed in our numerical experiments. 
The iteration numbers required to realize our method with a CG Hessian for piecewise quadratic trial functions ($p=2$) for different stabilization parameters in $J_h$ and different values of $\kappa$ are reported in \cref{tab:iteration_numbers_nocordes}. Obviously, with the preconditioner proposed in \eqref{eq:preconditioner} and the stabilization term $J_h$, we observe that the iteration numbers mildly increase when the mesh parameter decreases or when $\kappa$ approaches~$1$. 
The incorporation of an additional jump term for the second derivatives in $J_h$, \ie, the choice $\eta_2 > 0$ in \eqref{eq:Discrete_bilinear_form_stab}, did not lead to an improvement of the computational results.

\pgfplotstableread[col sep = comma]{./results/Degree_Study/degree_study_qd100_Cinfty_kappa_05_deg_1.csv}\degOne%
\pgfplotstableread[col sep = comma]{./results/Degree_Study/degree_study_qd100_Cinfty_kappa_05_deg_2.csv}\degTwo%
\pgfplotstableread[col sep = comma]{./results/Degree_Study/degree_study_qd100_Cinfty_kappa_05_deg_3.csv}\degThree%
\pgfplotstableread[col sep = comma]{./results/Degree_Study/degree_study_qd100_Cinfty_kappa_05_deg_4.csv}\degFour%
\begin{figure}[bht]
	\begin{subfigure}{0.33\textwidth}
		\centering
		\resizebox{\textwidth}{!}{
			\begin{tikzpicture}
				\begin{loglogaxis}[enlargelimits=false,
						xlabel={$\dim(V_h)$},
						legend to name=exp1degreeStudyLegend,
					legend columns = 7]
					\addplot+[cgZeroStab] table[x=Ndofs,y=cg0stab_L2_error] {\degOne};
					\addlegendentry{$\text{CG}_{\eta_1=0,\eta_2=0}$};
					\addplot+[NeilanSalgadoZhang] table[x=Ndofs,y=nsz_L2_error] {\degOne};
					\addlegendentry{$\text{NSZ}$};
					\addlegendimage{solid, black};
					\addlegendentry{$p = 1$};
					\addlegendimage{dashed, black};
					\addlegendentry{$p = 2$};
					\addlegendimage{dash dot, black};
					\addlegendentry{$p = 3$};
					\addlegendimage{densely dotted, black};
					\addlegendentry{$p = 4$};
					\addplot+[cgZeroStab, dashed] table[x=Ndofs,y=cg0stab_L2_error] {\degTwo};
					\addplot+[NeilanSalgadoZhang, dashed] table[x=Ndofs,y=nsz_L2_error] {\degTwo};
					\addplot+[cgZeroStab, dash dot] table[x=Ndofs,y=cg0stab_L2_error] {\degThree};
					\addplot+[NeilanSalgadoZhang, dash dot] table[x=Ndofs,y=nsz_L2_error] {\degThree};
					\addplot+[cgZeroStab, densely dotted] table[x=Ndofs,y=cg0stab_L2_error] {\degFour};
					\addplot+[NeilanSalgadoZhang, densely dotted] table[x=Ndofs,y=nsz_L2_error] {\degFour};
				\end{loglogaxis}
			\end{tikzpicture}
		}
		\caption{$\norm{u-u_h}_{L^2(\Omega)}$}
	\end{subfigure}\hfill
	\begin{subfigure}{0.33\textwidth}
		\centering
		\resizebox{\textwidth}{!}{
			\begin{tikzpicture}
				\begin{loglogaxis}[enlargelimits=false,
					xlabel={$\dim(V_h)$}]
					\addplot+[cgZeroStab] table[x=Ndofs,y=cg0stab_H1_error] {\degOne};
					\addplot+[NeilanSalgadoZhang] table[x=Ndofs,y=nsz_H1_error] {\degOne};
					\addplot+[cgZeroStab, dashed] table[x=Ndofs,y=cg0stab_H1_error] {\degTwo};
					\addplot+[NeilanSalgadoZhang, dashed] table[x=Ndofs,y=nsz_H1_error] {\degTwo};
					\addplot+[cgZeroStab, dash dot] table[x=Ndofs,y=cg0stab_H1_error] {\degThree};
					\addplot+[NeilanSalgadoZhang, dash dot] table[x=Ndofs,y=nsz_H1_error] {\degThree};
					\addplot+[cgZeroStab, densely dotted] table[x=Ndofs,y=cg0stab_H1_error] {\degFour};
					\addplot+[NeilanSalgadoZhang, densely dotted] table[x=Ndofs,y=nsz_H1_error] {\degFour};
				\end{loglogaxis}
			\end{tikzpicture}
		}
		\caption{$\norm{u-u_h}_{H^1(\Omega)}$}
	\end{subfigure}\hfill
	\begin{subfigure}{0.33\textwidth}
		\centering
		\resizebox{\textwidth}{!}{
			\begin{tikzpicture}
				\begin{loglogaxis}[enlargelimits=false,
					xlabel={$\dim(V_h)$}]
					\addplot+[cgZeroStab] table[x=Ndofs,y=cg0stab_H2h_error] {\degOne};
					\addplot+[NeilanSalgadoZhang] table[x=Ndofs,y=nsz_H2h_error] {\degOne};
					\addplot+[cgZeroStab, dashed] table[x=Ndofs,y=cg0stab_H2h_error] {\degTwo};
					\addplot+[NeilanSalgadoZhang, dashed] table[x=Ndofs,y=nsz_H2h_error] {\degTwo};
					\addplot+[cgZeroStab, dash dot] table[x=Ndofs,y=cg0stab_H2h_error] {\degThree};
					\addplot+[NeilanSalgadoZhang, dash dot] table[x=Ndofs,y=nsz_H2h_error] {\degThree};
					\addplot+[cgZeroStab, densely dotted] table[x=Ndofs,y=cg0stab_H2h_error] {\degFour};
					\addplot+[NeilanSalgadoZhang, densely dotted] table[x=Ndofs,y=nsz_H2h_error] {\degFour};
				\end{loglogaxis}
			\end{tikzpicture}
		}
		\caption{$\norm{u-u_h}_{H^2_h(\Omega)}$}
	\end{subfigure}
	\bigskip

	\centering
	\hypersetup{hidelinks}
	\ref{exp1degreeStudyLegend} 
	\caption{Comparison of absolute errors for different polynomial degrees for the example from~\cref{sec:cordes_violated} with $\kappa = 0.5$.}
	\label{fig:comparison_methods}
\end{figure}
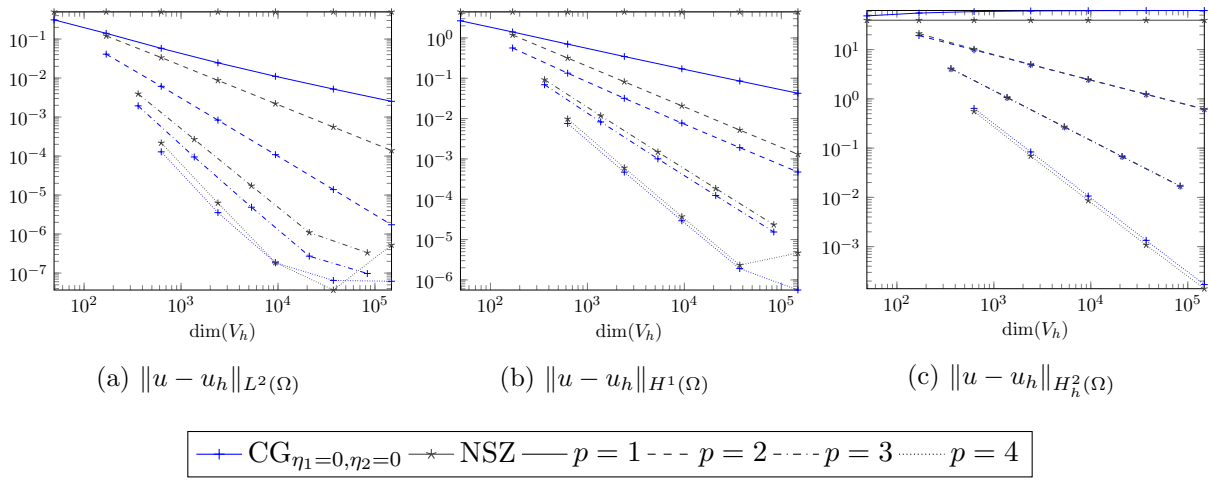

In a further numerical test, we computed the discretization error for
different polynomial degrees. Here, we used the choice $\kappa=1/2$.
As the \Cordes\ condition for this example is fulfilled with a sufficiently large~$\constCordes$
we dropped the stabilization terms, \ie, we set $\eta_1=\eta_2=0$.
For comparison, we also present computational results for the piecewise Hessian approach (NSZ).
The error plots in different norms and for varying polynomial degrees are shown in \cref{fig:comparison_methods}.
All convergence rates in the $H_h^2(\Omega)$-norm coincide with the ones predicted by \cref{thm:a_priori_estimate}.
It is also observed that both approaches behave quite similarly. In the $H_h^2(\Omega)$-norm the errors decay almost identically. However we observe two advantages for our approach using a Hessian recovery strategy. First, it even converges in the $L^2(\Omega)$- and $H^1(\Omega)$-norm if the polynomial degree $p=1$ is used. This coincides with the observations from~\cite{LakkisPryer2013}, where the case $p=1$ is allowed as well. Second, the convergence rate in the $L^2(\Omega)$-norm is higher for the Hessian recovery approach in case of quadratic elements. This is caused by the fact that a stabilization term is not needed in the present situation.

\pgfplotstableread[col sep = comma]{./results/Degree_Study/degree_study_Cinfty_kappa_05_cg0stab_deg_2.csv}\cgZeroStabTwo%
\pgfplotstableread[col sep = comma]{./results/Degree_Study/degree_study_Cinfty_kappa_05_cg1stab_deg_2.csv}\cgOneStabTwo%
\pgfplotstableread[col sep = comma]{./results/Degree_Study/degree_study_Cinfty_kappa_05_cg2stab_deg_2.csv}\cgTwoStabTwo%
\pgfplotstableread[col sep = comma]{./results/Degree_Study/degree_study_Cinfty_kappa_05_dg0stab_deg_2.csv}\dgZeroStabTwo%
\pgfplotstableread[col sep = comma]{./results/Degree_Study/degree_study_Cinfty_kappa_05_dg1stab_deg_2.csv}\dgOneStabTwo%
\pgfplotstableread[col sep = comma]{./results/Degree_Study/degree_study_Cinfty_kappa_05_Neilan_deg_2.csv}\NeilanTwo%
\pgfplotstableread[col sep = comma]{./results/Degree_Study/degree_study_Cinfty_kappa_05_NeilanSalgadoZhang_deg_2.csv}\NeilanSalgadoZhangTwo%
\pgfplotstableread[col sep = comma]{./results/Degree_Study/degree_study_Cinfty_kappa_05_cg0stab_deg_3.csv}\cgZeroStabThree%
\pgfplotstableread[col sep = comma]{./results/Degree_Study/degree_study_Cinfty_kappa_05_cg1stab_deg_3.csv}\cgOneStabThree%
\pgfplotstableread[col sep = comma]{./results/Degree_Study/degree_study_Cinfty_kappa_05_cg2stab_deg_3.csv}\cgTwoStabThree%
\pgfplotstableread[col sep = comma]{./results/Degree_Study/degree_study_Cinfty_kappa_05_dg0stab_deg_3.csv}\dgZeroStabThree%
\pgfplotstableread[col sep = comma]{./results/Degree_Study/degree_study_Cinfty_kappa_05_dg1stab_deg_3.csv}\dgOneStabThree%
\pgfplotstableread[col sep = comma]{./results/Degree_Study/degree_study_Cinfty_kappa_05_Neilan_deg_3.csv}\NeilanThree%
\pgfplotstableread[col sep = comma]{./results/Degree_Study/degree_study_Cinfty_kappa_05_NeilanSalgadoZhang_deg_3.csv}\NeilanSalgadoZhangThree%
\begin{figure}[bht]
	\begin{subfigure}{0.33\textwidth}
		\centering
		\resizebox{\textwidth}{!}{
			\begin{tikzpicture}
				\begin{loglogaxis}[enlargelimits=false,
					xlabel={$\dim(V_h)$}]
					\addplot+[cgZeroStab] table[x=Ndofs,y=L2_error] {\cgZeroStabTwo};
					\addplot+[cgOneStab]  table[x=Ndofs,y=L2_error] {\cgOneStabTwo};
					\addplot+[cgTwoStab]  table[x=Ndofs,y=L2_error] {\cgTwoStabTwo};
					\addplot+[dgZeroStab] table[x=Ndofs,y=L2_error] {\dgZeroStabTwo};
					\addplot+[dgOneStab]  table[x=Ndofs,y=L2_error] {\dgOneStabTwo};
					\addplot+[Neilan]     table[x=Ndofs,y=L2_error] {\NeilanTwo};
					\addplot+[NeilanSalgadoZhang] table[x=Ndofs,y=L2_error] {\NeilanSalgadoZhangTwo};
				\end{loglogaxis}
			\end{tikzpicture}
		}
		\caption{$\norm{u-u_h}_{L^2(\Omega)}$ for $p=2$}
	\end{subfigure}
	\begin{subfigure}{0.33\textwidth}
		\resizebox{\textwidth}{!}{
			\centering       
			\begin{tikzpicture}
				\begin{loglogaxis}[enlargelimits=false,
						legend to name=externallegend,
						legend columns = 7,
						legend entries = {$\text{CG}_{\eta_1=0,\eta_2=0}$,
							$\text{CG}_{\eta_1=1,\eta_2=0}$,
							$\text{CG}_{\eta_1=1,\eta_2=1}$,
							$\text{DG}_{\eta_1=0}$,
							$\text{DG}_{\eta_1=1}$,
							$\text{N}$,
						$\text{NSZ}$},
					xlabel={$\dim(V_h)$}]
					\addplot+[cgZeroStab] table[x=Ndofs,y=H1_error] {\cgZeroStabTwo};
					\addplot+[cgOneStab]  table[x=Ndofs,y=H1_error] {\cgOneStabTwo};
					\addplot+[cgTwoStab]  table[x=Ndofs,y=H1_error] {\cgTwoStabTwo};
					\addplot+[dgZeroStab] table[x=Ndofs,y=H1_error] {\dgZeroStabTwo};
					\addplot+[dgOneStab]  table[x=Ndofs,y=H1_error] {\dgOneStabTwo};
					\addplot+[Neilan]     table[x=Ndofs,y=H1_error] {\NeilanTwo};
					\addplot+[NeilanSalgadoZhang] table[x=Ndofs,y=H1_error] {\NeilanSalgadoZhangTwo};
				\end{loglogaxis}
			\end{tikzpicture}
		}
		\caption{$\norm{u-u_h}_{H^1(\Omega)}$ for $p=2$}
	\end{subfigure}
	\begin{subfigure}{0.33\textwidth}
		\centering
		\resizebox{\textwidth}{!}{
			\begin{tikzpicture}
				\begin{loglogaxis}[enlargelimits=false,
					xlabel={$\dim(V_h)$}]
					\addplot+[cgZeroStab] table[x=Ndofs,y=H2h_error] {\cgZeroStabTwo};
					\addplot+[cgOneStab]  table[x=Ndofs,y=H2h_error] {\cgOneStabTwo};
					\addplot+[cgTwoStab]  table[x=Ndofs,y=H2h_error] {\cgTwoStabTwo};
					\addplot+[dgZeroStab] table[x=Ndofs,y=H2h_error] {\dgZeroStabTwo};
					\addplot+[dgOneStab]  table[x=Ndofs,y=H2h_error] {\dgOneStabTwo};
					\addplot+[Neilan]     table[x=Ndofs,y=H2h_error] {\NeilanTwo};
					\addplot+[NeilanSalgadoZhang] table[x=Ndofs,y=H2h_error] {\NeilanSalgadoZhangTwo};
				\end{loglogaxis}	  	  
			\end{tikzpicture}
		}
		\caption{$\norm{u-u_h}_{H_h^2(\Omega)}$ for $p=2$}
	\end{subfigure}
	\begin{subfigure}{0.33\textwidth}
		\centering
		\resizebox{\textwidth}{!}{
			\begin{tikzpicture}
				\begin{loglogaxis}[enlargelimits=false,
					xlabel={$\dim(V_h)$}]
					\addplot+[cgZeroStab] table[x=Ndofs,y=L2_error] {\cgZeroStabThree};
					\addplot+[cgOneStab]  table[x=Ndofs,y=L2_error] {\cgOneStabThree};
					\addplot+[cgTwoStab]  table[x=Ndofs,y=L2_error] {\cgTwoStabThree};
					\addplot+[dgZeroStab] table[x=Ndofs,y=L2_error] {\dgZeroStabThree};
					\addplot+[dgOneStab]  table[x=Ndofs,y=L2_error] {\dgOneStabThree};
					\addplot+[Neilan]     table[x=Ndofs,y=L2_error] {\NeilanThree};
					\addplot+[NeilanSalgadoZhang] table[x=Ndofs,y=L2_error] {\NeilanSalgadoZhangThree};
				\end{loglogaxis}
			\end{tikzpicture}
		}
		\caption{$\norm{u-u_h}_{L^2(\Omega)}$ for $p=3$}
	\end{subfigure}
	\begin{subfigure}{0.33\textwidth}
		\resizebox{\textwidth}{!}{
			\centering       
			\begin{tikzpicture}
				\begin{loglogaxis}[enlargelimits=false,		
					xlabel={$\dim(V_h)$}]
					\addplot+[cgZeroStab] table[x=Ndofs,y=H1_error] {\cgZeroStabThree};
					\addplot+[cgOneStab]  table[x=Ndofs,y=H1_error] {\cgOneStabThree};
					\addplot+[cgTwoStab]  table[x=Ndofs,y=H1_error] {\cgTwoStabThree};
					\addplot+[dgZeroStab] table[x=Ndofs,y=H1_error] {\dgZeroStabThree};
					\addplot+[dgOneStab]  table[x=Ndofs,y=H1_error] {\dgOneStabThree};
					\addplot+[Neilan]     table[x=Ndofs,y=H1_error] {\NeilanThree};
					\addplot+[NeilanSalgadoZhang] table[x=Ndofs,y=H1_error] {\NeilanSalgadoZhangThree};
				\end{loglogaxis}
			\end{tikzpicture}
		}
		\caption{$\norm{u-u_h}_{H^1(\Omega)}$ for $p=3$}
	\end{subfigure}
	\begin{subfigure}{0.33\textwidth}
		\centering
		\resizebox{\textwidth}{!}{
			\begin{tikzpicture}
				\begin{loglogaxis}[enlargelimits=false,
					xlabel={$\dim(V_h)$}]
					\addplot+[cgZeroStab] table[x=Ndofs,y=H2h_error] {\cgZeroStabThree};
					\addplot+[cgOneStab]  table[x=Ndofs,y=H2h_error] {\cgOneStabThree};
					\addplot+[cgTwoStab]  table[x=Ndofs,y=H2h_error] {\cgTwoStabThree};
					\addplot+[dgZeroStab] table[x=Ndofs,y=H2h_error] {\dgZeroStabThree};
					\addplot+[dgOneStab]  table[x=Ndofs,y=H2h_error] {\dgOneStabThree};
					\addplot+[Neilan]     table[x=Ndofs,y=H2h_error] {\NeilanThree};
					\addplot+[NeilanSalgadoZhang] table[x=Ndofs,y=H2h_error] {\NeilanSalgadoZhangThree};
				\end{loglogaxis}	  	  
			\end{tikzpicture}
		}
		\caption{$\norm{u-u_h}_{H_h^2(\Omega)}$ for $p=3$}
	\end{subfigure}    
	\bigskip

	\centering
	\hypersetup{hidelinks}
	\ref{externallegend} 
	\caption{This figure shows the performance of the algorithms for the example from~\cref{sec:cordes_violated} for $p=2$ and $p=3$ for different discretization strategies.}
	\label{fig:exp1_comparison_methods_deg_2}
\end{figure}
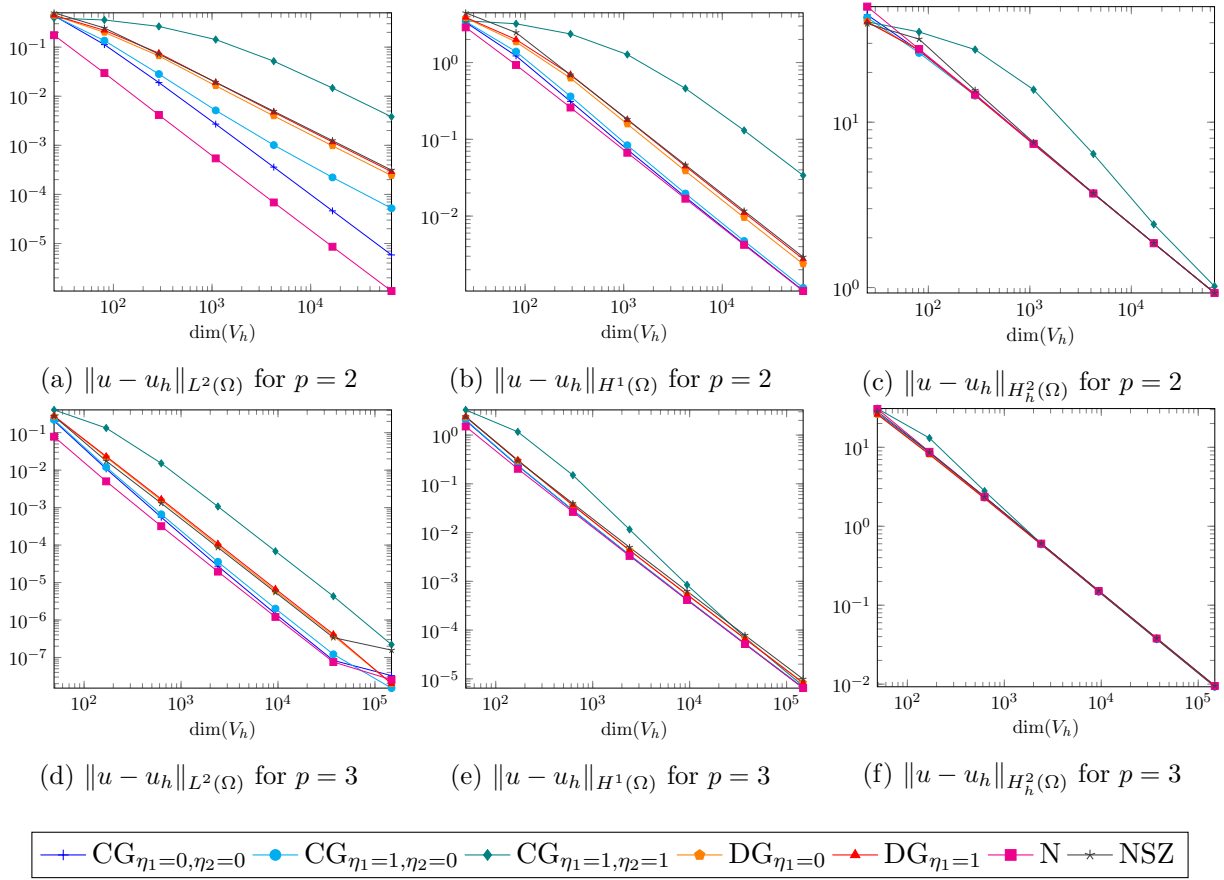

In a last test for this example we check how the methods studied in the present article compare with the approaches (N) and (NSZ)  mentioned at the beginning of this section.
The error curves for different norms and different polynomial degrees can be found in \cref{fig:exp1_comparison_methods_deg_2}. Although all approaches behave quite similarly, we observe a difference in the convergence rates in $L^2(\Omega)$ for quadratic elements. Obviously, the approaches taking into account stabilization terms (these are our approaches with $\eta_1\ne 0$ and (NSZ)) converge only with order~$2$, while the remaining approaches (these are our approach with $\eta_1=\eta_2=0$ and (N)) converge with order~$3$. A proof of this conjecture is subject of future research.

\subsection{A problem with singular solution}
\label{sec:Halpha_solution}

In this example we consider the Poisson problem, \ie, the diffusion matrix is chosen as $A = \I_{2\times2}$, in the domain $\Omega = (0,1)^2$. 
Emphasis is put on problems whose solutions have reduced regularity. To this end, we construct the right-hand side $f$ in such a way that 
\begin{equation*}
	u(x) = r(x)^{\alpha} \, \sin(2 \, \varphi(x)) \, (1 - x_1) \, (1-x_2)
\end{equation*}
is the exact solution. Here, $(r(x),\varphi(x))$ are polar coordinates centered in the origin.
A simple computation shows that $u\in H^s(\Omega)$ holds for all $s<1+\alpha$. 
In the present experiment we choose the value $\alpha=3/2$ and expect the regularity of almost $H^{5/2}(\Omega)$, and thus, as predicted by \cref{thm:a_priori_estimate}, the convergence rate in the $H_h^2(\Omega)$-norm should be $1/2-\varepsilon$ for arbitrary $\varepsilon>0$.
We would also expect that an adaptive finite element method will retain the optimal convergence
rate. The adaptive strategy we implemented uses the local
error estimator \eqref{eq:local_estimator}, the D\"orfler marking strategy in
such a way that those elements contributing 90\% to the globally estimated error are marked, and the bisection refinement strategy provided by the \fenics\ library.
The results shown in \cref{fig:adaptive_vs_uniform} confirm the optimality of the adaptively generated finite element meshes. It is also observed that the convergence rates in the $H^1(\Omega)$- and $L^2(\Omega)$-norm are optimal.

\input{experiment2_adaptive.tex}

\subsection{A problem with discontinuous coefficient matrix}
\label{sec:experiment_disc_A}

This example illustrates the capability of the method to handle discontinuous diffusion coefficients. Problems of this type are of particular interest as a transformation to a PDE in divergence form is not possible.
The coefficient matrix in the present example is 
\begin{equation*}
	A(x) = 
	\begin{pmatrix}
		2&\sgn(x_1\,x_2) \\
		\sgn(x_1\,x_2)&2
	\end{pmatrix},
\end{equation*}
and $f$ is chosen in such a way that $u(x_1,x_2) = x_1\,x_2\,(1-e^{1-\abs{x_1}})\,(1-e^{1-\abs{x_2}})$ is the exact solution. The computational domain is $\Omega:=(-1,1)^2$.
This example is also used in the numerical experiments from \cite{FengNeilanSchnake2018,SmearsSueli2014,WangWang2017}, where different discretization approaches are studied.
Here, we apply our finite element scheme from \cref{sec:discretization_approach} and investigate the behavior of an adaptive finite element method based on the error estimator derived in \cref{thm:estimator_reliable}.
The adaptively generated mesh as well as the error curves can be found in \cref{fig:FNS_adaptive_vs_uniform}.

\input{experiment3_adaptive.tex}

Finally we illustrate the convergence behavior for different choices for the polynomial degree in \cref{fig:experiment3_degree_study} and compare again our method without stabilization and the piecewise Hessian approach (NSZ). In the $H_h^2(\Omega)$-norm both approaches behave similarly and the convergence rate predicted in \cref{thm:a_priori_estimate} is also confirmed. Our approach performs even slightly better
when comparing the error in weaker norms.

\pgfplotstableread[col sep = comma]{./results/Degree_Study/degree_study_qd100_FNS_5_4_deg_1.csv}\degOne%
\pgfplotstableread[col sep = comma]{./results/Degree_Study/degree_study_qd100_FNS_5_4_deg_2.csv}\degTwo%
\pgfplotstableread[col sep = comma]{./results/Degree_Study/degree_study_qd100_FNS_5_4_deg_3.csv}\degThree%
\pgfplotstableread[col sep = comma]{./results/Degree_Study/degree_study_qd100_FNS_5_4_deg_4.csv}\degFour%
\begin{figure}[bht]
	\begin{subfigure}{0.33\textwidth}
		\centering
		\resizebox{\textwidth}{!}{
			\begin{tikzpicture}
				\begin{loglogaxis}[enlargelimits=false,
						xlabel={$\dim(V_h)$},
						legend to name=exp3degreeStudyLegend,
					legend columns = 7]
					\addplot+[cgZeroStab] table[x=Ndofs,y=cg0stab_L2_error] {\degOne};
					\addlegendentry{$\text{CG}_{\eta_1=0,\eta_2=0}$};
					\addplot+[NeilanSalgadoZhang] table[x=Ndofs,y=nsz_L2_error] {\degOne};
					\addlegendentry{$\text{NSZ}$};
					\addlegendimage{solid, black};
					\addlegendentry{$p = 1$};
					\addlegendimage{dashed, black};
					\addlegendentry{$p = 2$};
					\addlegendimage{dash dot, black};
					\addlegendentry{$p = 3$};
					\addlegendimage{densely dotted, black};
					\addlegendentry{$p = 4$};
					\addplot+[cgZeroStab, dashed] table[x=Ndofs,y=cg0stab_L2_error] {\degTwo};
					\addplot+[NeilanSalgadoZhang, dashed] table[x=Ndofs,y=nsz_L2_error] {\degTwo};
					\addplot+[cgZeroStab, dash dot] table[x=Ndofs,y=cg0stab_L2_error] {\degThree};
					\addplot+[NeilanSalgadoZhang, dash dot] table[x=Ndofs,y=nsz_L2_error] {\degThree};
					\addplot+[cgZeroStab, densely dotted] table[x=Ndofs,y=cg0stab_L2_error] {\degFour};
					\addplot+[NeilanSalgadoZhang, densely dotted] table[x=Ndofs,y=nsz_L2_error] {\degFour};
				\end{loglogaxis}
			\end{tikzpicture}
		}
		\caption{$\norm{u-u_h}_{L^2(\Omega)}$}
	\end{subfigure}\hfill
	\begin{subfigure}{0.33\textwidth}
		\centering
		\resizebox{\textwidth}{!}{
			\begin{tikzpicture}
				\begin{loglogaxis}[enlargelimits=false,
					xlabel={$\dim(V_h)$}]
					\addplot+[cgZeroStab] table[x=Ndofs,y=cg0stab_H1_error] {\degOne};
					\addplot+[NeilanSalgadoZhang] table[x=Ndofs,y=nsz_H1_error] {\degOne};
					\addplot+[cgZeroStab, dashed] table[x=Ndofs,y=cg0stab_H1_error] {\degTwo};
					\addplot+[NeilanSalgadoZhang, dashed] table[x=Ndofs,y=nsz_H1_error] {\degTwo};
					\addplot+[cgZeroStab, dash dot] table[x=Ndofs,y=cg0stab_H1_error] {\degThree};
					\addplot+[NeilanSalgadoZhang, dash dot] table[x=Ndofs,y=nsz_H1_error] {\degThree};
					\addplot+[cgZeroStab, densely dotted] table[x=Ndofs,y=cg0stab_H1_error] {\degFour};
					\addplot+[NeilanSalgadoZhang, densely dotted] table[x=Ndofs,y=nsz_H1_error] {\degFour};
				\end{loglogaxis}
			\end{tikzpicture}
		}
		\caption{$\norm{u-u_h}_{H^1(\Omega)}$}
	\end{subfigure}\hfill
	\begin{subfigure}{0.33\textwidth}
		\centering
		\resizebox{\textwidth}{!}{
			\begin{tikzpicture}
				\begin{loglogaxis}[enlargelimits=false,
					xlabel={$\dim(V_h)$}]
					\addplot+[cgZeroStab] table[x=Ndofs,y=cg0stab_H2h_error] {\degOne};
					\addplot+[NeilanSalgadoZhang] table[x=Ndofs,y=nsz_H2h_error] {\degOne};
					\addplot+[cgZeroStab, dashed] table[x=Ndofs,y=cg0stab_H2h_error] {\degTwo};
					\addplot+[NeilanSalgadoZhang, dashed] table[x=Ndofs,y=nsz_H2h_error] {\degTwo};
					\addplot+[cgZeroStab, dash dot] table[x=Ndofs,y=cg0stab_H2h_error] {\degThree};
					\addplot+[NeilanSalgadoZhang, dash dot] table[x=Ndofs,y=nsz_H2h_error] {\degThree};
					\addplot+[cgZeroStab, densely dotted] table[x=Ndofs,y=cg0stab_H2h_error] {\degFour};
					\addplot+[NeilanSalgadoZhang, densely dotted] table[x=Ndofs,y=nsz_H2h_error] {\degFour};
				\end{loglogaxis}
			\end{tikzpicture}
		}
		\caption{$\norm{u-u_h}_{H^2_h(\Omega)}$}
	\end{subfigure}
	\bigskip

	\centering
	\hypersetup{hidelinks}
	\ref{exp3degreeStudyLegend} 
	\caption{Comparison of absolute errors for different polynomial degrees $p$ for the example from~\cref{sec:experiment_disc_A} with discontinuous coefficient matrix.}
	\label{fig:experiment3_degree_study}
\end{figure}

\subsection{A problem with anisotropic and discontinuous coefficient matrix}
\label{sec:experiment_anisotropic_diffusion}

In this example we consider a problem in $\Omega\coloneqq (-1,1)^2$ with the input data
\begin{equation*}
	f=-1\quad\text{and}\quad A = \begin{pmatrix} 0.02 & 0.01 \\ 0.01 & 1 + \textbf{1}_{(x^3-y > 0)} \end{pmatrix}.
\end{equation*}
The diffusion in $x_1$-direction is very small so that the solution exhibits a boundary layer at the boundary edges $x_2=-1$ and $x_2=1$. Moreover, the coefficient $A_{22}$ is discontinuous.   
The computational results for our adaptive finite element method are illustrated in \cref{fig:exp4_adaptive_vs_uniform}.
We observe that the discontinuity and the boundary layer are both resolved by the mesh.
Furthermore, the propagation of the error is illustrated and one observes that the adaptive refinement retains the optimal convergence rate. Note that we used the value of the global estimator $\eta$ as an error measure since an explicit solution is not available for this example.

\pgfplotstableread[col sep = comma]{results/experiment4/Disc_A_deg_1_CG_1_stab_uniform.csv}\expFourCGZeroUniformOne%
\pgfplotstableread[col sep = comma]{results/experiment4/Disc_A_deg_1_CG_1_stab_adaptive.csv}\expFourCGZeroAdaptiveOne%
\pgfplotstableread[col sep = comma]{results/experiment4/Disc_A_deg_2_CG_1_stab_uniform.csv}\expFourCGZeroUniformTwo%
\pgfplotstableread[col sep = comma]{results/experiment4/Disc_A_deg_2_CG_1_stab_adaptive.csv}\expFourCGZeroAdaptiveTwo%
\pgfplotstableread[col sep = comma]{results/experiment4/Disc_A_deg_3_CG_1_stab_uniform.csv}\expFourCGZeroUniformThree%
\pgfplotstableread[col sep = comma]{results/experiment4/Disc_A_deg_3_CG_1_stab_adaptive.csv}\expFourCGZeroAdaptiveThree%
\pgfplotstableread[col sep = comma]{results/experiment4/Disc_A_deg_4_CG_1_stab_uniform.csv}\expFourCGZeroUniformFour%
\pgfplotstableread[col sep = comma]{results/experiment4/Disc_A_deg_4_CG_1_stab_adaptive.csv}\expFourCGZeroAdaptiveFour%
\begin{figure}[ht]
	\begin{subfigure}[b]{0.45\textwidth}
		\centering
		\includegraphics[width=.9\textwidth]{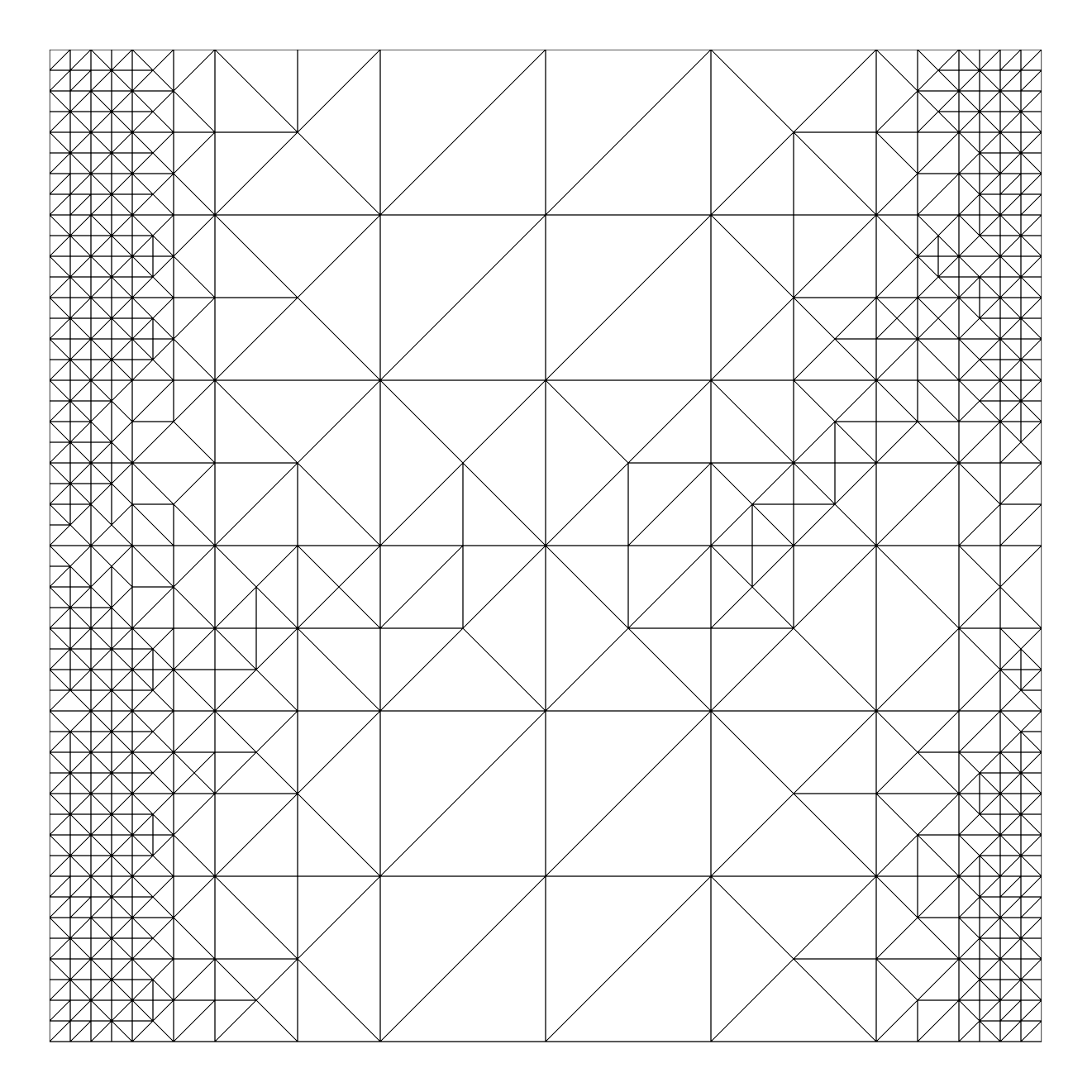}
		\caption{Adaptively generated mesh with $\num{2158}$~dofs}
	\end{subfigure}\quad
	\begin{subfigure}[b]{0.45\textwidth}
		\centering
		\includegraphics[width=.9\textwidth]{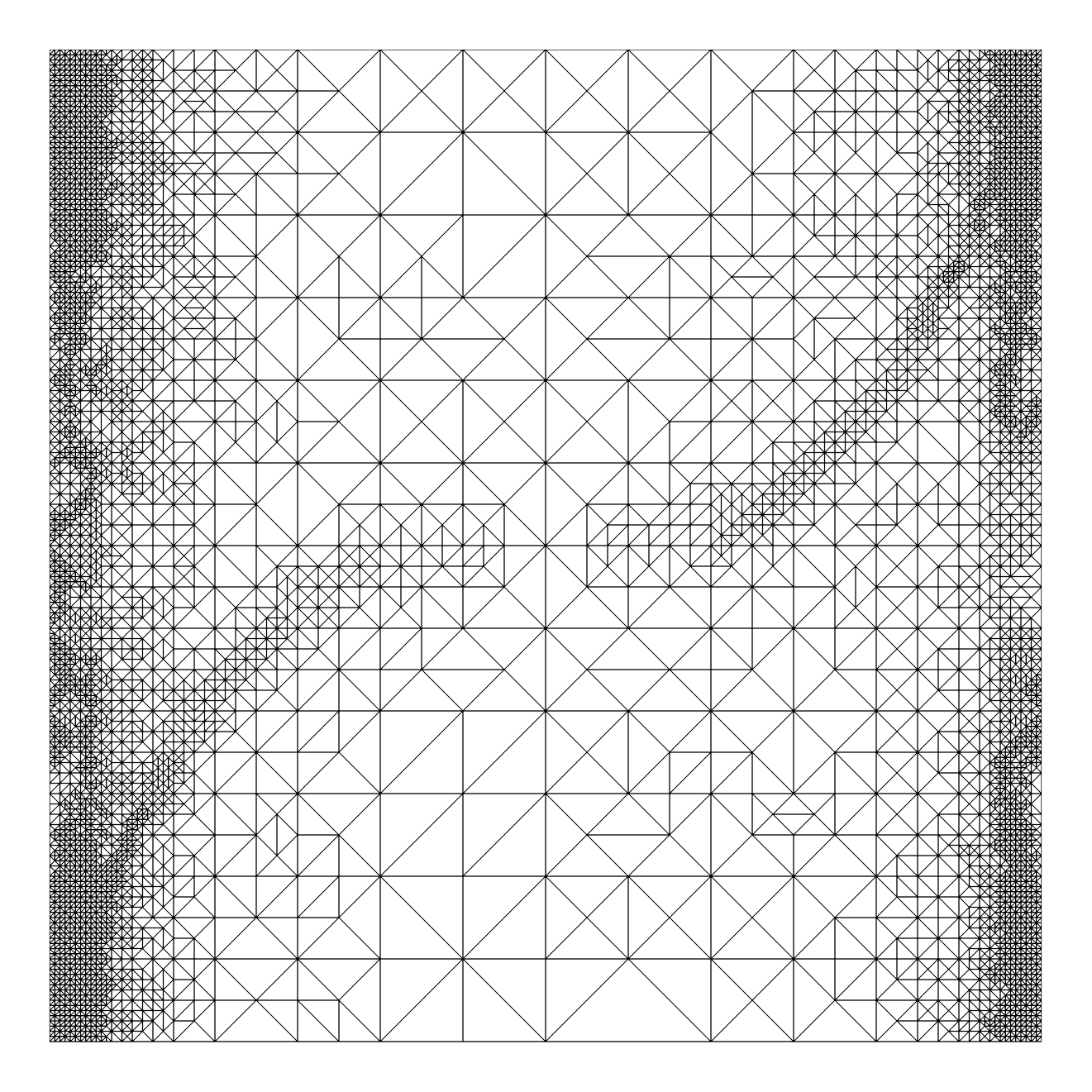}
		\caption{Adaptively generated mesh with $\num{20057}$~dofs}
	\end{subfigure}\\
	\begin{subfigure}[b]{0.45\textwidth}
		\includegraphics[width=\textwidth]{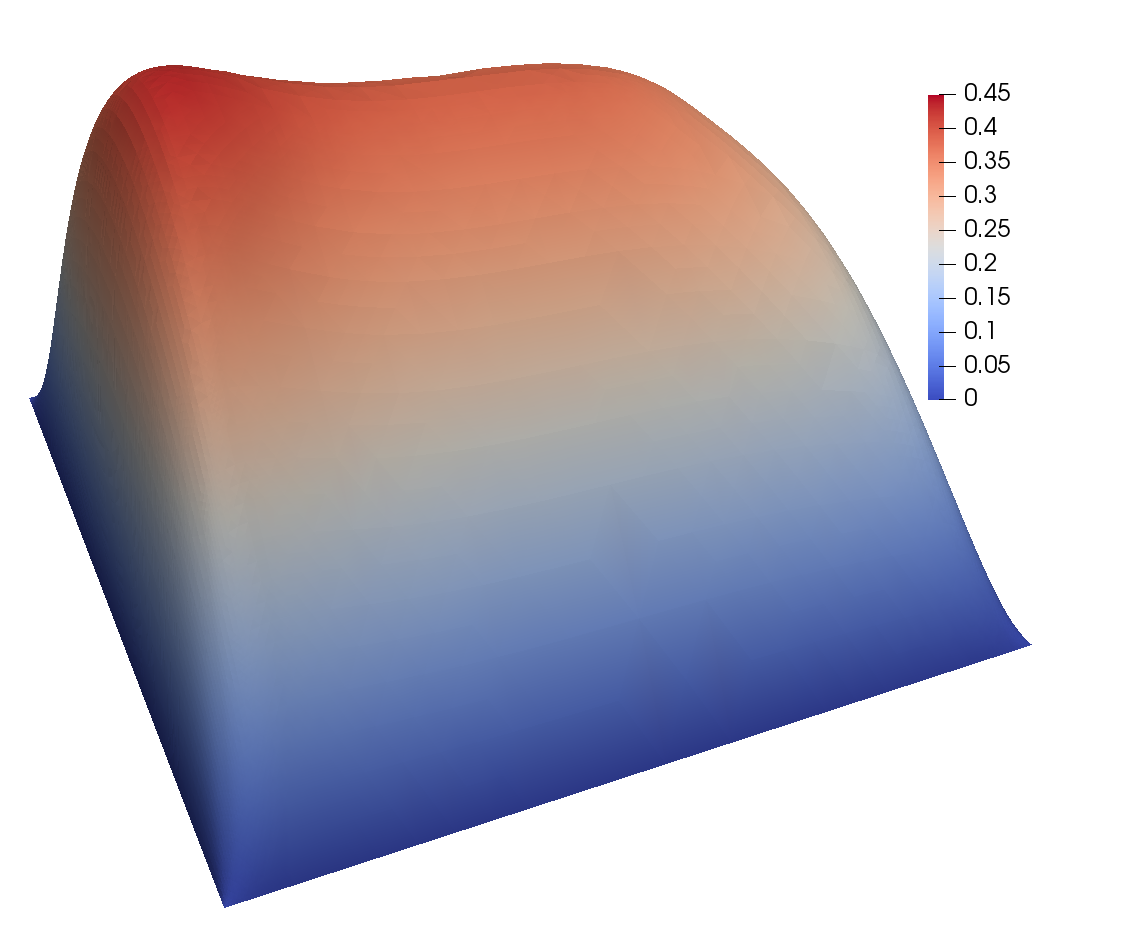}
		\caption{Numerical solution $u_h$\\\phantom{a}}
	\end{subfigure}\quad
	\begin{subfigure}[b]{0.45\textwidth}
		\centering
		\resizebox{\textwidth}{!}{
			\begin{tikzpicture}
				\begin{loglogaxis}[enlargelimits=false,
						legend pos=south west,
					xlabel={$\dim(V_h)$}]
					\addplot+[adaptiveH1] table[x=Ndofs,y=Eta_global] {\expFourCGZeroAdaptiveTwo};
					\addplot+[adaptiveH2] table[x=Ndofs,y=Eta_global] {\expFourCGZeroAdaptiveThree};
					\addplot+[adaptiveL2] table[x=Ndofs,y=Eta_global] {\expFourCGZeroAdaptiveFour};
					\addlegendentry{$p = 2$};
					\addlegendentry{$p = 3$};
					\addlegendentry{$p = 4$};
					\addplot+[uniformH1] table[x=Ndofs,y=Eta_global] {\expFourCGZeroUniformTwo};
					\addplot+[uniformH2] table[x=Ndofs,y=Eta_global] {\expFourCGZeroUniformThree};
					\addplot+[uniformL2] table[x=Ndofs,y=Eta_global] {\expFourCGZeroUniformFour};
				\end{loglogaxis}
			\end{tikzpicture}
		}
		\caption{Propagation of error estimator $\eta$}
	\end{subfigure}
	\centering
	\caption{Comparison between uniform (dashed lines) and adaptive
	refinement (solid lines) for the example from \cref{sec:experiment_anisotropic_diffusion}.}
	\label{fig:exp4_adaptive_vs_uniform}
\end{figure}

\subsection{A three-dimensional problem with reduced regularity}%
\label{sec:experiment_3d}

In this numerical experiment we show the applicability of our procedure to the three-dimensional case.
We choose the diffusion matrix to be $A = \I_{3\times3}$ in the domain $\Omega = {(0,1)}^3$.
Similar as in \cref{sec:Halpha_solution}, the solution
\begin{equation*}
    u(x) = r(x)^\alpha
\end{equation*}
with $r(x) = \sqrt{\sum_{i=1}^3{(x_i-0.5)}^2}$ possesses a reduced regularity, \ie, $u \in H^s(\Omega)$ for $s < 1.5 + \alpha$.
The results for various choices of $\alpha$ are shown in \cref{fig:adaptive_vs_uniform_3d} and confirm the expected behavior.
\pgfplotstableread[col sep = comma]{./csv/Sol_in_H_2.25_3d_uniform.csv}\ThreeDUniformLeft%
\pgfplotstableread[col sep = comma]{./csv/Sol_in_H_2.25_3d_adaptive.csv}\ThreeDAdaptiveLeft%
\pgfplotstableread[col sep = comma]{./csv/Sol_in_H_2.50_3d_uniform.csv}\ThreeDUniformRight%
\pgfplotstableread[col sep = comma]{./csv/Sol_in_H_2.50_3d_adaptive.csv}\ThreeDAdaptiveRight%
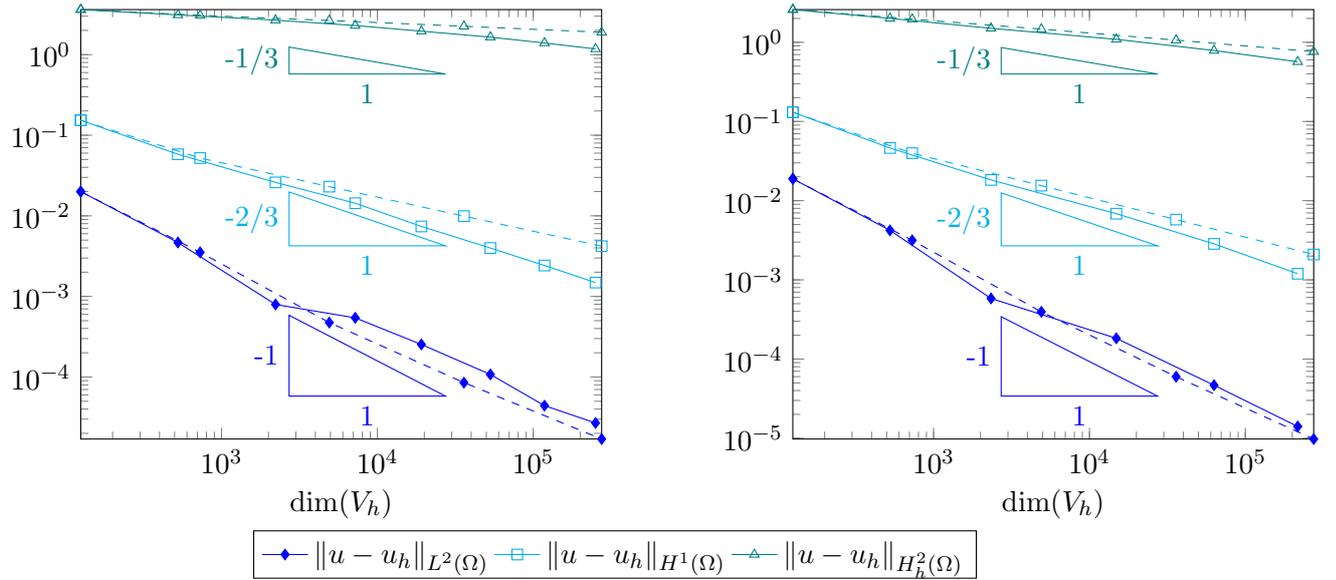
\begin{figure}[ht]
    \begin{subfigure}[b]{0.42\textwidth}
		\centering
			\begin{tikzpicture}
				\begin{loglogaxis}[enlargelimits=false,
						legend to name=exp6legend61,
						legend columns = 7,
					xlabel={$\dim(V_h)$}]
					\addplot+[adaptiveL2] table[x=Ndofs,y=L2_error] {\ThreeDAdaptiveLeft};
					\addplot+[adaptiveH1] table[x=Ndofs,y=H1_error] {\ThreeDAdaptiveLeft};
					\addplot+[adaptiveH2] table[x=Ndofs,y=H2h_error] {\ThreeDAdaptiveLeft};
					\addplot+[uniformL2] table[x=Ndofs,y=L2_error] {\ThreeDUniformLeft};
					\addplot+[uniformH1] table[x=Ndofs,y=H1_error] {\ThreeDUniformLeft};      
					\addplot+[uniformH2] table[x=Ndofs,y=H2h_error] {\ThreeDUniformLeft};
					\addlegendentry{$\norm{u-u_h}_{L^2(\Omega)}$};
					\addlegendentry{$\norm{u-u_h}_{H^1(\Omega)}$};
					\addlegendentry{$\norm{u-u_h}_{H_h^2(\Omega)}$};
					\logLogSlopeTriangle{0.4}{-0.3}{0.1}{-1}{blue};
					\logLogSlopeTriangle{0.4}{-0.3}{0.45}{-2/3}{cyan};
					\logLogSlopeTriangle{0.4}{-0.3}{0.85}{-1/3}{teal};	      
				\end{loglogaxis}
			\end{tikzpicture}
        \end{subfigure}\hfill%
    \begin{subfigure}[b]{0.42\textwidth}
		\centering
			\begin{tikzpicture}
				\begin{loglogaxis}[enlargelimits=false,
						legend to name=exp6legend62,
						legend columns = 7,
					xlabel={$\dim(V_h)$}]
					\addplot+[adaptiveL2] table[x=Ndofs,y=L2_error] {\ThreeDAdaptiveRight};
					\addplot+[adaptiveH1] table[x=Ndofs,y=H1_error] {\ThreeDAdaptiveRight};
					\addplot+[adaptiveH2] table[x=Ndofs,y=H2h_error] {\ThreeDAdaptiveRight};
					\addplot+[uniformL2] table[x=Ndofs,y=L2_error] {\ThreeDUniformRight};
					\addplot+[uniformH1] table[x=Ndofs,y=H1_error] {\ThreeDUniformRight};      
					\addplot+[uniformH2] table[x=Ndofs,y=H2h_error] {\ThreeDUniformRight};
					\addlegendentry{$\norm{u-u_h}_{L^2(\Omega)}$};
					\addlegendentry{$\norm{u-u_h}_{H^1(\Omega)}$};
					\addlegendentry{$\norm{u-u_h}_{H_h^2(\Omega)}$};
					\logLogSlopeTriangle{0.4}{-0.3}{0.1}{-1}{blue};
					\logLogSlopeTriangle{0.4}{-0.3}{0.45}{-2/3}{cyan};
					\logLogSlopeTriangle{0.4}{-0.3}{0.85}{-1/3}{teal};	      
				\end{loglogaxis}
			\end{tikzpicture}
        \end{subfigure}
	\centering
	\hypersetup{hidelinks}
	\ref{exp6legend61}
        \caption{Comparison between uniform (dashed lines) and adaptive refinement (solid lines) for the example from \cref{sec:experiment_3d}, left $\alpha = 2.25$, right $\alpha = 2.5$.
    }%
	\label{fig:adaptive_vs_uniform_3d}
\end{figure}
For example, the selection $\alpha = 1$ results in a regularity of almost $H^{5/2}(\Omega)$, which in turn yields an expected convergence rate in the $H_h^2(\Omega)$-norm of $1/2-\varepsilon$ for arbitrary $\varepsilon>0$ and $3/2 - \varepsilon$ and $5/2 - \varepsilon$ in the $H^1(\Omega)$- and $L^2(\Omega)$-norms, respectively.
The corresponding error plor in~\cref{fig:adaptive_vs_uniform_3d} confirms this.
An adaptive refinement strategy using the local error estimator~\eqref{eq:local_estimator} with a refinement threshold of 95\% is capable of confirming the convergence rate of the errors in the $L^2(\Omega)$-norm and improving the convergence rates in the $H^1(\Omega)$ and $H^2_h(\Omega)$-norm.

\section{Conclusion and outlook}%
\label{section:conclusion}

The proposed method can be extended to parabolic problems.
Given a regular solution and an appropriate time stepping scheme one can observe the same convergence rates as in the elliptic case.
Numerical tests have been performed to confirm this and they are included in the GitHub repository accompanying the paper.
A detailed analysis as in~\cite{SmearsSueli2015} is left to future research.
Another subject, and this was the authors' original motivation to study this topic, is the application of the proposed discretization to Hamilton-Jacobi-Bellman equations. A preliminary implementation is also available in the repository and the related theoretical foundation will be examined in further publications.

Besides these two extensions there are further interesting questions left. An obvious question is the proof for error estimates in lower-order norms. Note that one advantage of the approach proposed in this article is that optimal convergence in $L^2(\Omega)$ is observed. However, a proof of this observation is still missing. To the best of the authors' knowledge the only article dealing with estimates in lower-order norms is~\cite{FengSchnake2019_preprint}, where an $H^1(\Omega)$-norm estimate for the Petrov-Galerkin approach using $\tau_h=\text{id}$ is shown. Related studies for methods exploiting the Cordes condition are not available in the literature. A proof based on the usual duality argument is likely not expedient as, for instance, the dual equation of a non-divergence form PDE is a PDE in double-divergence form whose solutions possess insufficient regularity. In the special case $A=\text{id}$ and for the method proposed in \cref{sec:PiecewiseHessianApproach}, the discretization coincides with a $\mathcal{C}^0$-interior penalty discretization of the biharmonic equation and error estimates in lower-order norms can be directly concluded from~\cite{BrennerSung2005}. However, an extension to approaches using recovered Hessians is not straightforward and requires further investigations. 

\section*{Acknowledgments}

The authors thank Martin Stoll for discussions on preconditioners for our equation system.
\bigskip

This work was partially supported by DFG grant HE~6077/7--1 \emph{Impulse Control Problems and Adaptive Numerical Solution of Quasi-Variational Inequalities in Markovian Factor Models}.
Funding is gratefully acknowledged.

\printbibliography[sorting=nyt]

\end{document}